\newcommand{ \PBoutcomeD}{{\sc $\D$-outcome}\ }
\newcommand{ \PBPDS}{{\sc Pairing Dominating Set}\ }
\newcommand{\D}{\mathcal{D}}
\newcommand{\St}{\mathcal{S}}
\tikzstyle{noeud}=[circle,inner sep=2, minimum size =3 pt, line width = 1pt, draw=black, fill=white]
\tikzstyle{petit_noeud}=[circle,inner sep=1.5, minimum size =2.5 pt, line width = 0.75pt, draw=black, fill=white]
\newtheorem{theorem}{Theorem}[section]
\newtheorem{corollary}[theorem]{Corollary}
\newtheorem{definition}[theorem]{Definition}
\newtheorem{proposition}[theorem]{Proposition}
\newtheorem{lemma}[theorem]{Lemma}
\newtheorem{open}[theorem]{Open Question}
\title[Partition strategies for the Maker-Breaker domination game]{Partition strategies for the Maker-Breaker domination game}
\author[1]{\fnm{Guillaume} \sur{Bagan}}\email{guillaume.bagan@univ-lyon1.fr}
\author[1]{\fnm{Eric} \sur{Duch\^ene}}\email{eric.duchene@univ-lyon1.fr}
\author[2]{\fnm{Valentin} \sur{Gledel}}\email{valentin.gledel@univ-smb.fr}
\author[3]{\fnm{Tuomo} \sur{Lehtilä}}\email{tualeh@utu.fi}
\author[1]{\fnm{Aline} \sur{Parreau}}\email{aline.parreau@univ-lyon1.fr}
\affil[1]{\orgname{Univ Lyon}, \orgdiv{CNRS, INSA Lyon, UCBL, Centrale Lyon, Univ Lyon 2, LIRIS, UMR5205,
F-69622}, \orgaddress{\city{Villeurbanne}, \country{France}}}
\affil[2]{\orgname{Université Savoie Mont Blanc}, \orgdiv{CNRS UMR5127, LAMA}, \orgaddress{\city{Chambéry}, \postcode{F-73000}, \country{France}}}
\affil[3]{\orgdiv{Department of mathematics and statistic}, \orgname{University of Turku}, \orgaddress{\country{Finland}}}
\begin{document}

\abstract{
    The Maker-Breaker domination game is a positional game played on a graph by two players called Dominator and Staller. The players alternately select a vertex of the graph that has not yet been chosen. Dominator wins if at some point the vertices she has chosen form a dominating
set of the graph. Staller wins if Dominator cannot form a dominating set. 
Deciding if Dominator has a winning strategy has been shown to be a PSPACE-complete problem even when restricted to chordal or bipartite graphs. In this paper, we consider strategies for Dominator based on partitions of the graph into basic subgraphs where Dominator wins as the second player.
Using partitions into cycles and edges (also called perfect [1,2]-factors), we show that Dominator always wins in regular graphs and that deciding whether Dominator has a winning strategy as a second player can be computed in polynomial time for outerplanar and block graphs. We then study partitions into subgraphs with two universal vertices, which is equivalent to considering the existence of pairing dominating sets with adjacent pairs. We show that in interval graphs, Dominator wins if and only if such a partition exists. In particular, this implies that deciding whether Dominator has a winning strategy playing second is in NP for interval graphs. We finally provide an algorithm in $n^{k+3}$ for $k$-nested interval graphs (i.e. interval graphs with at most $k$ intervals included one in each other).

}

\maketitle

\section{Introduction}

The Maker-Breaker domination game is played on a graph $G=(V,E)$. The two players, called Dominator  and Staller, claim turn by turn unclaimed vertices of the graph. If at some point the vertices claimed by Dominator form a dominating set of $G$, then Dominator wins. Otherwise, that is, if Staller manages to claim a vertex and all its neighbors, Staller wins.
This game has been defined recently by Duch\^ene, Gledel, Parreau and Renault \cite{MBdomgame}. The game belongs to the larger family of Maker-Breaker positional games played on hypergraphs, that was introduced first by Hales in 1963 \cite{Hales1963}, and later  by Erdös and Selfridge in \cite{erdos}. The famous game {\sc hex} belongs to this family \cite{positionalgames}. In this general context, two players called Maker and Breaker alternately claim a vertex from a given hypergraph. Maker wins if she manages to claim all the vertices of a hyperedge, otherwise Breaker wins. The Maker-Breaker domination game played on a graph $G$ is a particular instance of such games, where the corresponding hypergraph has as its set of hyperedges all the dominating sets of $G$, and Maker corresponds to Dominator. The two reference books \cite{beck,positionalgames} constitute a good introduction to the field of positional games for the uninitiated reader. \\

The main issue concerning positional games is about the computation of the outcome of the game, i.e. who wins whatever the strategy of the opponent is. The input of the problem is generally the (hyper)graph on which the game is played (also called in this paper a {\em game position}, a term that includes graphs where several vertices may already have been claimed). In the case of the Maker-Breaker domination game, it has been proved in \cite{MBdomgame} that there are only three possible outcomes:
\begin{itemize}
    \item $\D$, meaning that Dominator wins whoever starts;
    \item $\St$, meaning that Staller wins whoever starts;
    \item $\mathcal{N}$, meaning that the first player has a winning strategy.
\end{itemize}

A general result about Maker-Breaker games ensures that a player has never interest to miss his turn \cite{beck}. Therefore, an outcome $\D$ corresponds to a game position where Dominator wins when playing second. \\

It has been proved in \cite{MBdomgame} that the computation of the outcome of the domination game is a PSPACE-complete problem in general and also on chordal graphs, but can be done in linear time on trees and cographs. Recall that a problem belongs to PSPACE if it can be solved with a polynomial amount of space. Also recall that PSPACE is a superclass of the class NP, so PSPACE-complete problems are generally considered as very hard problems to deal with. In \cite{mbtotalcubic,mbtotal}, a variation of the domination game has been defined using the total domination property. For these two kinds of domination games, there are several families of graphs (like paths, cycles or particular cases of cycles or grids) for which winning strategies for Dominator can be exhibited by partitioning the vertex set of the graph. More precisely, if the vertices of the graph can be partitioned with partial subgraphs where each outcome is $\D$, then the overall outcome is also $\D$ (see Proposition 3 of \cite{MBdomgame}). Indeed, it suffices for Dominator to apply her winning strategy as a second player on each subgraph. As an illustration, in the standard domination game, cycles and cliques of size at least~$2$ have an outcome $\D$. Therefore, if the game is played on a graph that admits a Hamiltonian cycle or a perfect matching, then the outcome is $\D$. As depicted by Figure~\ref{fig:partitionCliques}, it is also the case if the graph can be partitioned with a union of cliques of size at least $2$. For all these situations, the computation of the outcome and the computation of the winning strategy (i.e. the sequence of winning moves corresponding to this outcome) can both be done in polynomial time. Indeed, the winning strategies for paths, cycles, grids or cliques are easy to describe with pairing arguments. Yet, the two problems are not always correlated in terms of complexity.

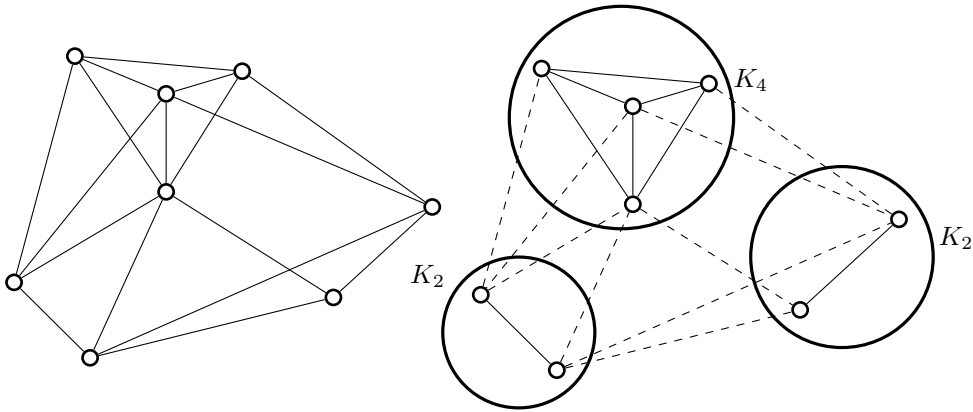
\begin{figure}[ht]
\centering
\begin{tikzpicture}[scale=0.77]

\node at (0,0){
\begin{tikzpicture}
\node[noeud] (a) at (0,1){};
\node[noeud] (b) at (1,0){};
\node[noeud] (c) at (0.8,4){};
\node[noeud] (d) at (2,2.2){};
\node[noeud] (e) at (3,3.8){};
\node[noeud] (f) at (4.2,0.8){};
\node[noeud] (g) at (5.5,2){};
\node[noeud] (h) at (2,3.5){};

\draw (a) -- (b);
\draw (c) -- (d);
\draw (c) -- (e);
\draw (f) -- (g);
\draw (d) -- (e);
\draw (h) -- (c);
\draw (h) -- (d);
\draw (h) -- (e);

\draw (a) -- (c);
\draw (a) -- (d);
\draw (b) -- (d);
\draw (b) -- (f);
\draw (d) -- (f);
\draw (e) -- (g);
\draw (h) -- (g);
\draw (h) -- (a);
\draw (b) -- (g);
\end{tikzpicture}
};

\node at (8,0){
\begin{tikzpicture}
\node[noeud] (a) at (0,1){};
\node[noeud] (b) at (1,0){};
\node[noeud] (c) at (0.8,4){};
\node[noeud] (d) at (2,2.2){};
\node[noeud] (e) at (3,3.8){};
\node[noeud] (f) at (4.2,0.8){};
\node[noeud] (g) at (5.5,2){};
\node[noeud] (h) at (2,3.5){};

\draw (a) -- (b);
\draw (c) -- (d);
\draw (c) -- (e);
\draw (f) -- (g);
\draw (d) -- (e);
\draw (h) -- (c);
\draw (h) -- (d);
\draw (h) -- (e);

\draw[dashed] (a) -- (c);
\draw[dashed] (a) -- (d);
\draw[dashed] (b) -- (d);
\draw[dashed] (b) -- (f);
\draw[dashed] (d) -- (f);
\draw[dashed] (e) -- (g);
\draw[dashed] (h) -- (g);
\draw[dashed] (h) -- (a);
\draw[dashed] (b) -- (g);

\draw[very thick] (0.5,0.5) circle (1);
\draw[very thick] (1.85,3.35) circle (1.475);
\draw[very thick] (4.75, 1.5) circle (1.2);

\node at (-0.7,1.25){$K_2$};
\node at (6.25,1.75){$K_2$};
\node at (3.55,3.85){$K_4$};
\end{tikzpicture}
};

\end{tikzpicture}
\caption{A graph having a partition into three cliques of size at least $2$: the outcome is $\D$.}
\label{fig:partitionCliques}
\end{figure}

The aim of the current paper is to explore the correlation between the existence of such partitions and the existence of a winning strategy for Dominator. In particular, there are cases for which these two problems are proved to be equivalent. For example, in \cite{MBdomgame}, it is proved that the domination game has outcome $\D$ in trees if and only if the graph admits a perfect matching. Such an equivalence is not true on general graphs as in terms of first-order logic, combinatorial structures of graphs and $2$-player games are fundamentally different. In addition, the well-known concept of {\em pairing strategies}~\cite{beck} in positional games is strongly correlated to such partitioning strategies. Pairing strategies have been instantiated in the domination game under the name of {\em pairing dominating sets}. The relation between pairing dominating sets and winning strategies for Dominator is also explored in \cite{MBdomgame}. In terms of algorithmic complexity, equivalences between partition or pairing strategies and the outcome of a game open a door to a reduction of the general complexity of the latter problem. Indeed, the existence of a pairing dominating set in a graph is proved to be a NP-complete problem, whereas the outcome of the domination game is $\mathsf{PSPACE}$-complete in general. The main motivation of the current paper is to investigate such equivalences, as it corresponds to a rare way to decrease the natural complexity of $2$-player games (i.e. $\mathsf{PSPACE}$) to the one of standard combinatorial problems (i.e. $\mathsf{NP}$).\\

The paper is organized as follows. In Section~\ref{sec:tools}, we give the suitable general material to have a formal definition of such partitions of the graph. More precisely, the notion of $\mathcal F$-factor of a graph will be introduced, where the set $\mathcal F$ corresponds to the set of graphs allowed in the partition. In order to yield winning strategies for Dominator in larger classes of graphs, a natural objective is to consider sets of simple or small graphs having outcome $\D$ for $\mathcal F$. As an example, the set of cliques for $\mathcal F$ can be reduced by considering only the graphs with two adjacent universal vertices, that are also $\D$. On Figure~\ref{fig:partitionCliques}, any edge of the $K_4$ can thus be removed without changing the result. 

In Section~\ref{sec:12factor}, the case of graphs having perfect $[1,2]$-factors is developed. They correspond to the case where $\mathcal F$ is the set of edges and cycles. Deciding whether a graph admits such a factor is polynomial. By proving the equivalence between this problem and the computation of the outcome, this leads us to a polynomial time algorithm (both for the outcome and the winning strategy) for the domination game played on a family  of graphs including outerplanar graphs and block graphs. Regular graphs always admit perfect $[1,2]$-factors, leading to a direct outcome $\D$ for them. 

In Section~\ref{sec:interval}, we recall the notion of pairing dominating sets, and see how they can be connected to $\mathcal F$-factors. In particular, if pairing dominating sets have the property that both elements of the pairs are adjacent, then they  correspond exactly to $\mathcal F$-factors, where $\mathcal F$ is the set of complete bipartite graphs, with an additional edge between the two vertices of the first part. We then focus on interval graphs for which this property of adjacency is true. We solve the case of unit interval graphs. Then, the major result of the paper is given: we prove that deciding the existence of a pairing dominating set 
in this class is equivalent to deciding whether the outcome is $\D$. This result
 improves the known complexity of the domination game in interval graphs (i.e. $\mathsf{NP}$). We then investigate the resolution of the pairing dominating set problem in interval graphs and provide an algorithm in $O(n^{k+3})$ for $k$-nested interval graphs that are interval graphs with at most $k$ intervals included  in each other.

\section{Preliminaries}\label{sec:tools}

\subsection{Graphs}

We give here the terminology about graphs that is required for a good understanding of this paper. First, all considered graphs here will be finite and simple. They will generally be described by pairs $(V,E)$ where $V$ is the set of vertices and $E$ the set of edges. We will denote by $N[v]$ the closed neighborhood of a vertex $v$ (i.e. $v$ and all its neighbors). If $X$ is a subset of vertices, we define $N(X)$ as the set of all the vertices that are neighbors to at least one vertex of $X$. Two vertices $x$ and $y$ are {\em twins} if they satisfy $N[x]=N[y]$.

Given a graph $G$ with $(V,E)$ and a subset $V'$ of $V$, we say that a subgraph $G'$ of $G$ is induced by $V'$ if it has $V'$ as set of vertices, and an edge $(x,y)$ belongs to $G'$ if and only if $(x,y)\in E$.

A set $S$ of vertices of a graph $G$ is a {\em dominating set} if for every vertex $v$, there is a vertex $x$ in $S \cap N[v]$. 

An {\em interval graph} is the intersection graph of intervals of the real line, i.e. each vertex corresponds to an interval, and two vertices are adjacent if their corresponding intervals intersect.

A graph is called a {\em cactus graph} if any two cycles of the graph  have at most one vertex in common. A graph is said to be {\em outerplanar} if it is planar and all the vertices belong to the outer face of the planar drawing.

Given a vertex $u$ of a graph $G$, the graph $G-u$ is the subgraph of $G$ where $u$ and all the edges incident to $u$ have been removed. 
A {\em cut-vertex} in a graph is a vertex that disconnects the graph when removed.

A {\em $2$-connected graph} is a connected graph that remains connected after the removal of any vertex. A {\em $2$-connected component} of a graph is a maximal $2$-connected subgraph of it. A graph is called a {\em block graph} if every $2$-connected component is a clique.

A graph is said {\em regular} if all its vertices have the same degree. The term {\em k-regular} is used when this degree is equal to $k$.

We say that $H=(V_H,E_H)$ is a {\em spanning subgraph} of $G=(V_G,E_G)$ if $V_H=V_G$ and $E_H\subseteq E_G$. We say that a spanning subgraph $H$ of $G$ is a {\em perfect matching} if all the vertices of $H$ have degree one. A graph is {\em Hamiltonian} if it admits a connected spanning subgraph where all the vertices have degree two.

\subsection{Maker-Breaker domination game}

Recall that for the Maker-Breaker domination game, there are three possible outcomes, namely $\D$, $\St$ and $\mathcal{N}$. We will use the notation $o(G)$ to define the outcome of a graph $G$. In general, we will consider game positions where all the vertices are unclaimed at the beginning. Yet, there are some situations where it is useful to consider game positions where some moves have already been played by Dominator and/or Staller. Such positions played on a graph $G$ will be denoted by $(G,D,S)$, with $D\cap S=\emptyset$, and where $D$ (resp. $S$) is a set of vertices that are already claimed by Dominator (resp. Staller). Note that a position $G$ simply denotes the position $(G,\emptyset, \emptyset)$.

As explained in the introduction, in Maker-Breaker positional games, both players never have interest to miss their turn. Therefore, if a player has a winning strategy as a second player, then he also wins being first. As the current paper deals with strategies for Dominator, we will mainly identify some of the situations where she always wins (being first or second) or not. The previous remark leads us to consider only the outcome where Dominator is the second player. Therefore, for the study we are concerned with, the decision problem related to the domination game will be defined as follows:\\

\noindent \PBoutcomeD~\\
{\bf Input:} A graph $G$ \\
{\bf Output:} Does $G$ satisfy $o(G)=\D$ ?\\

It is known from~\cite{MBdomgame} that \PBoutcomeD is a PSPACE-complete problem.
The next two propositions are easy observations that constitute the starting point for partition strategies. 

\begin{proposition}[\cite{MBdomgame}]\label{prop:union}
    Let $G_1$ and $G_2$ be two graphs on disjoint sets of vertices with outcome $\D$. Then the disjoint union of $G_1$ and $G_2$, denoted by $G_1\cup G_2$, has outcome $\D$.
\end{proposition}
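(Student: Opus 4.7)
My plan is to exhibit an explicit strategy for Dominator on $G_1 \cup G_2$ when she plays second, by simulating her winning strategies on $G_1$ and $G_2$ in parallel. Since $o(G_i)=\D$, Dominator has a winning strategy as second player on $G_i$; by the no-skipping principle for Maker-Breaker games recalled earlier, she also has a winning strategy as first player on $G_i$. I will denote these two strategies by $\sigma_i^2$ and $\sigma_i^1$ respectively.

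The overall strategy on $G_1\cup G_2$ is the natural ``echo'' strategy: whenever Staller plays in $G_i$, Dominator responds in the same $G_i$, following the winning strategy she has assigned to that sub-game. Concretely, I would track, for each $G_i$, who made the first move in that sub-game; if it was Staller then Dominator uses $\sigma_i^2$, and if it was Dominator then she uses $\sigma_i^1$. Because Dominator always responds in the same sub-game as Staller's last move, the sequence of moves restricted to $G_i$ strictly alternates between the two players, so each sub-strategy is fed a legitimate history and always has a legal reply.

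The only case that requires extra care is when all vertices of some sub-game $G_i$ become claimed on a Staller move, which can happen when $|V(G_i)|$ is odd and play has been concentrated there. Dominator is then forced to play in $G_j$, and if $G_j$ is still empty she initiates it; from that point on she follows $\sigma_j^1$. This is precisely the step where the availability of a first-player winning strategy, and hence the no-skipping principle, is used, and I expect this bookkeeping to be the only delicate point in the argument.

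Finally, since all vertices are claimed when the overall game ends, each sub-game has been played to completion with Dominator following one of her winning strategies, so her vertices in $G_i$ form a dominating set of $G_i$. The union of these sets is a dominating set of $G_1\cup G_2$, hence $o(G_1\cup G_2)=\D$.
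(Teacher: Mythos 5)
Your echo strategy is exactly the argument the paper has in mind (the result is only cited from \cite{MBdomgame}, with the one-line justification that Dominator applies her second-player strategy on each subgraph), so the approach is right. There is, however, one sub-case your bookkeeping does not cover. You correctly identify that trouble arises when $G_i$ is exhausted by a Staller move, but you only treat the case where the other component $G_j$ is still empty. Since Dominator always echoes, $G_j$ may at that moment already contain an even, nonzero number of claimed vertices, with the last move in the $G_j$ sub-game having been Dominator's (e.g.\ $G_1=C_3$, $G_2=C_5$: Staller opens in $G_1$, later plays once in $G_2$, then finishes $G_1$). Dominator is then forced to make two consecutive moves in the $G_j$ sub-game, so the alternation you rely on breaks and $\sigma_j^1$ is of no use since the sub-game is already underway. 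The standard repair: Dominator claims an arbitrary unclaimed vertex of $G_j$ as a \emph{bonus} and continues to follow $\sigma_j^2$ on the fiction that this move never happened; if $\sigma_j^2$ later asks her to claim the bonus vertex, she claims another arbitrary vertex and records it as the new bonus. Her final set of vertices in $G_j$ is then a superset of what $\sigma_j^2$ alone would have produced, and an extra vertex never hurts Maker, so $G_j$ is still dominated. With that clause added, your proof is complete.
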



\begin{proposition}[Monotonicity \cite{MBdomgame}]\label{prop:monotonicity}
  Let $G$ be a graph and $H$ be a spanning subgraph of $G$. If $H$ has outcome $\D$, then $G$ also has outcome $\D$.
\end{proposition}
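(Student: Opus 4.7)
The plan is a strategy-simulation argument exploiting that $H$ and $G$ share the same vertex set. Dominator, playing on $G$, would mimic her winning strategy from $H$: whenever Staller claims a vertex in $G$, Dominator interprets this as Staller's move in a parallel imagined game on $H$ and responds with the vertex prescribed by her winning strategy there. At every step the sets of claimed and still-available vertices coincide in the two games, so this response is always a legal move in $G$.

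The correctness boils down to a single observation: since $H$ is a spanning subgraph of $G$, we have $N_H[v]\subseteq N_G[v]$ for every vertex $v$. Two consequences follow. First, any dominating set of $H$ is automatically a dominating set of $G$; hence as soon as Dominator's claimed vertices dominate $H$ in the imagined game (which $o(H)=\D$ guarantees will eventually happen), they already dominate $G$ and she has won. Second, Staller cannot sneak a win on $G$ before the imagined game terminates: claiming all of $N_G[v]$ forces her to claim all of $N_H[v]$ as well, which would already give her a win on $H$, contradicting $o(H)=\D$.

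Applying this simulation once with Dominator's winning strategy on $H$ as first player and once with her winning strategy on $H$ as second player shows that Dominator wins on $G$ regardless of who moves first, i.e.\ $o(G)=\D$. The only step requiring a moment of care is the second consequence above, namely that Staller's winning condition on $G$ is strictly harder to satisfy than on $H$ so the simulation is never cut short; the containment of dominating sets and the legality of the simulated moves are immediate from $V(H)=V(G)$ and $E(H)\subseteq E(G)$.
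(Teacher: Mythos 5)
Your proposal is correct and is exactly the standard argument: since $V(H)=V(G)$, Dominator can transplant her winning strategy from $H$ to $G$ move-for-move, and any dominating set of the spanning subgraph $H$ is a dominating set of $G$. The paper itself only cites \cite{MBdomgame} for this proposition rather than reproving it, and your simulation argument is the proof given there; the remark about Staller not winning early is harmless but not needed, since Dominator's winning condition is simply that her final claimed set dominates.
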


The next definition has been introduced by the authors of \cite{MBdomgame} and illustrates a graph structure that guarantees the existence of a winning strategy for Dominator. It will be a key element of Section~\ref{sec:interval} dedicated to interval graphs.

\begin{definition}[\cite{MBdomgame}]\label{def:PDS}
    Let $G=(V,E)$ be a graph. A set of pairs of vertices $\{(u_1,v_1), . . . , (u_k, v_k)\}$ of $V$ is a {\em pairing dominating set} (PDS for short) if all the vertices in the pairs are distinct and if the intersection of the closed neighborhoods of each pair covers the vertices of the
graph:
$$V = \cup_{i=1}^k N[u_i]\cap N[v_i]$$
\end{definition}

Figure~\ref{fig:pds-example} gives an example of a pairing dominating set.

\begin{figure}[h]
    \centering
    \begin{tikzpicture}
        \node[noeud] (u1) at (0,1){};
        \node[noeud] (v1) at (0,-1){};
        \node[noeud] (u3) at (1,0){};
        \node[noeud] (a) at (2,-1){};
        \node[noeud] (b) at (2,0){};
        \node[noeud] (c) at (2,1){};
        \node[noeud] (v3) at (3,0){};
        \node[noeud] (u2) at (4,1){};
        \node[noeud] (v2) at (4,-1){};

        \draw (u1) -- (v1) -- (u3) -- (u1);
        \draw (u2) -- (v2) -- (v3) -- (u2);
        \draw (u3) -- (a) -- (v3);
        \draw (u3) -- (b) -- (v3);
        \draw (u3) -- (c) -- (v3);

        \node[left = 2pt of u1] {$u_1$};
        \node[left = 2pt of v1] {$v_1$};
        \node[right = 2pt of u2] {$u_2$};
        \node[right = 2pt of v2] {$v_2$};
        \node[above = 2pt of u3] {$u_3$};
        \node[above = 2pt of v3] {$v_3$};
        
    \end{tikzpicture}
    \caption{The set $\{(u_1,v_1),(u_2,v_2),(u_3,v_3)\}$ is a pairing dominating set.}
    \label{fig:pds-example}
\end{figure}
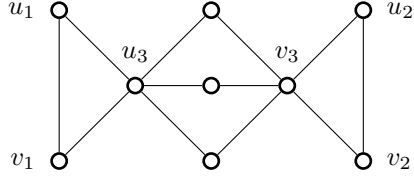

Having a pairing dominating set is a sufficient condition for a graph to have outcome $\D$. Indeed, Dominator playing second can claim at least one vertex of each pair and thus dominate the graph.

\begin{proposition}[\cite{MBdomgame}]
    Let $G$ be a graph. If $G$ has a pairing dominating set, then $G$ has outcome~$\D$.
\end{proposition}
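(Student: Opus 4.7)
My plan is to exhibit an explicit pairing strategy for Dominator as the second player. Since in Maker-Breaker games a player never has an interest in missing a turn (as recalled in the text), winning as the second player implies winning as the first one as well, so showing that Dominator wins playing second is sufficient to conclude $o(G)=\D$.

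Let $\{(u_1,v_1),\ldots,(u_k,v_k)\}$ be a pairing dominating set of $G$. The strategy I would assign to Dominator is the following: whenever Staller has just claimed a vertex belonging to some pair $(u_i,v_i)$ whose partner is still unclaimed, Dominator responds by claiming that partner. In all remaining cases---either Staller plays a vertex that lies in no pair, or Staller plays into a pair whose other vertex has already been claimed---Dominator plays an arbitrary unclaimed vertex.

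The heart of the argument will be the following invariant, which I would prove by induction on the number of completed rounds: after every Dominator move, for every pair $(u_i,v_i)$ either both $u_i$ and $v_i$ are still unclaimed, or at least one of them has been claimed by Dominator. The delicate case is when Staller plays a vertex outside every pair, since then Dominator's reply is not dictated by the pairing; however, Dominator's free move either leaves every pair unchanged or starts to occupy a fresh pair, so it can only preserve or strengthen the invariant. All other cases follow immediately from the pairing rule.

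At the end of the game every vertex is claimed, and the invariant forces Dominator's set $D$ to contain at least one vertex of each pair. Given any $v\in V$, the PDS property yields an index $i$ with $v\in N[u_i]\cap N[v_i]$, and since $D\cap\{u_i,v_i\}\neq\emptyset$, vertex $v$ is dominated by $D$. Thus $D$ is a dominating set and Dominator wins. The only point requiring care is the bookkeeping around the free moves imposed on Dominator when Staller plays off-pair, but this amounts to a one-line verification that such moves cannot violate the invariant.
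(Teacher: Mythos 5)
Your proof is correct and follows the same pairing-strategy argument the paper relies on: Dominator playing second answers each Staller move inside a pair by taking the partner, thereby securing at least one vertex of every pair, and the PDS covering condition then makes her set dominating. The invariant you state (each pair is either untouched or already contains a Dominator vertex) is exactly the right bookkeeping and handles the free-move cases correctly.
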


The next lemma is a consequence of the general super Lemma 1.84 from Oijid's thesis~\cite{nacimthesis} in the context of the Maker-Breaker domination game: 

\begin{lemma}[Super lemma \cite{nacimthesis}]
\label{superlemma}
Let $(G,D,S)$ be any game position, and let $(x,y)$ be any pair of unclaimed vertices. If $x$ and $y$ are twins, then the positions $(G,D,S)$ and $(G,D\cup \{x\}, S\cup \{y\})$ have the same outcome. 
\end{lemma}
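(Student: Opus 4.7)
The plan is to use a strategy-transfer argument based on twin symmetry. Since $N[x] = N[y]$, the transposition $\phi$ swapping $x$ and $y$ (and fixing every other vertex) is an automorphism of $G$; in particular, the collection of dominating sets of $G$ is closed under $\phi$, and $o(G, D', S') = o(G, \phi(D'), \phi(S'))$ for every game position $(G, D', S')$. Write $P_1 = (G, D, S)$ and $P_2 = (G, D \cup \{x\}, S \cup \{y\})$. By finite-game determinacy, to conclude $o(P_1) = o(P_2)$ I would reduce to proving, for each starting player, the two implications ``Dominator wins $P_2 \Rightarrow$ Dominator wins $P_1$'' and ``Staller wins $P_2 \Rightarrow$ Staller wins $P_1$''; their converses are the contrapositives of one another's statements.

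For the first implication, I would fix a winning strategy $\sigma$ for Dominator in $P_2$ and build a winning strategy for her in $P_1$ by maintaining a virtual copy of the game initialized at $P_2$. On her own turns Dominator plays the move $\sigma$ recommends in the virtual game; this is never $x$ or $y$ (both are already claimed in the virtual state) and is unclaimed in the real game because the two copies' claimed sets differ only on $\{x, y\}$. When Staller plays $z \notin \{x, y\}$ in the real game the move is mirrored in the virtual game; when Staller plays $x$ (resp.\ $y$), Dominator answers on her next real turn by playing $y$ (resp.\ $x$), and $\phi$ is then applied to the virtual state so that the two games coincide again, after which $\sigma$ (composed with $\phi$ if a swap occurred, still a winning strategy by the twin automorphism) continues to dictate play.

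The invariants to verify are that the real and virtual claimed sets always agree outside of $\{x, y\}$, and that once both twins are claimed in the real game the two copies coincide up to $\phi$. Dominator's final real set therefore equals her final virtual set up to $\phi$, which is dominating by the choice of $\sigma$. Turn parity stays synchronized because each pair-resolution consumes two real turns and zero virtual turns. The second implication, transferring a winning strategy for Staller from $P_2$ to $P_1$, is handled by an entirely symmetric argument: Staller now runs the virtual copy and responds to a Dominator move on $\{x, y\}$ by playing the partner on her next turn.

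The part I expect to be the main technical obstacle is the bookkeeping around pair-resolution. One must check that the twin-partner response is always legal---which it is, because the virtual strategy never touches $x$ or $y$, so whichever twin has not yet been played in the real game remains unclaimed precisely when needed---and that applying $\phi$ to the virtual state really does yield a position from which the transformed $\sigma$ still wins, which follows from the twin automorphism observation above.
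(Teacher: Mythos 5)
Your argument is correct, and it is essentially the intended one: the paper itself gives no proof of this lemma but imports it as a special case of the general ``super lemma'' (Lemma~1.84 of Oijid's thesis), and your virtual-game construction is precisely the standard proof of that super lemma, specialized to the situation where the hypothesis is witnessed by a hypergraph automorphism. The twin condition $N[x]=N[y]$ does make the transposition $\phi$ a graph automorphism (note $x$ and $y$ are necessarily adjacent, since $x\in N[x]=N[y]$), so dominating sets are preserved and the conjugated strategy $\phi\circ\sigma\circ\phi$ remains winning after a swap; your parity accounting (two extra real moves absorbed by the pair-resolution, zero virtual moves) is also right, as is the reduction via determinacy to the two one-sided implications. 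The only point you leave implicit is the endgame in which the opponent never touches $\{x,y\}$: then the virtual game terminates while $x$ and $y$ are still unclaimed in the real game, and your strategy, which treats the pair only reactively, prescribes no move for the player running the simulation when it is her turn. This is harmless --- she simply claims either twin (they are interchangeable by $\phi$), and her final real set is the virtual final set up to renaming $x$ as $y$, hence still dominating (respectively, still non-dominating in the Staller direction) --- but a complete write-up should say so explicitly. With that one sentence added, the proof is complete and self-contained, which is arguably more than the paper provides.
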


As a consequence of this result, from a position of outcome $\D$, if a vertex $w$ of the graph has its neighborhood included in the neighborhood of another vertex $v$, then one can assume that Dominator keeps her winning strategy by answering $v$ (if available) after a move of Staller in $w$.

\begin{lemma}\label{BadPlacementLemma}
Let $G$ be a position with $o(G)=\D$ and let us have two vertices $w,v\in V(G)$ with $N[w]\subseteq N[v]$. There also exists a winning strategy for Dominator playing second in $(G,\{v\},\{w\})$.
\end{lemma}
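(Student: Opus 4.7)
The plan is to reduce the statement to the Super Lemma (Lemma~\ref{superlemma}) by first modifying $G$ so that $v$ and $w$ become genuine twins. I will introduce an auxiliary graph $G^+$ obtained from $G$ by adding every edge $wx$ with $x\in N_G[v]\setminus N_G[w]$. Since $w\in N_G[w]\subseteq N_G[v]$ and $w\neq v$, the edge $vw$ already belongs to $G$, and a short computation then gives $N_{G^+}[v]=N_{G^+}[w]=N_G[v]$, so $v$ and $w$ are twins in $G^+$.

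Because $G$ is a spanning subgraph of $G^+$, Proposition~\ref{prop:monotonicity} will yield $o(G^+)=\D$. Applying Lemma~\ref{superlemma} to the unclaimed twins $v$ and $w$ in $G^+$ then transfers this outcome to the position $(G^+,\{v\},\{w\})$, producing a winning strategy for Dominator playing second in $G^+$ after these two initial moves have been made.

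The most delicate step, and the main obstacle I anticipate, will be transferring this strategy back from $G^+$ to $G$. My plan is to observe that the two positions $(G,\{v\},\{w\})$ and $(G^+,\{v\},\{w\})$ share exactly the same set of unclaimed vertices and the same move structure, so any strategy on one side is mechanically a strategy on the other; the only question is whether the winning conditions agree. I will argue that, for any subset $D$ of vertices containing $v$ but not $w$, $D$ dominates $G$ if and only if $D$ dominates $G^+$. One direction follows from $G\subseteq G^+$. For the other, every new edge of $G^+$ has $w$ as an endpoint, and since $w\notin D$ these edges are useless for $D$ to dominate any vertex $u\neq w$; moreover $w$ itself is already dominated in $G$ by $v\in D$ because $vw\in E(G)$. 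With this equivalence in hand, the strategy produced in $(G^+,\{v\},\{w\})$ is automatically winning in $(G,\{v\},\{w\})$, which concludes the proof.
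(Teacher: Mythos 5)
Your proposal is correct and follows essentially the same route as the paper: construct the supergraph in which $v$ and $w$ are twins, note it still has outcome $\D$ by monotonicity, apply the Super Lemma to reach the position $(\cdot,\{v\},\{w\})$, and then observe that the added edges (all incident to $w$, whose endpoints are already dominated by the claimed vertex $v$ in $G$) never matter for Dominator's winning condition. Your transfer-back step is just a slightly more explicit version of the paper's one-line justification.
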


\begin{proof}
    Consider the supergraph $G'$ of $G$ with the same set of vertices, by adding all the necessary edges incident to $w$ so that $v$ and $w$ are now twins. By application of Lemma~\ref{superlemma}, we have that Dominator wins on $(G',\{v\},\{w\})$. In this graph, since the vertex $v$ is claimed by Dominator, all the edges that have been added have their two extremities dominated by $v$. Therefore, Dominator also wins playing second in $(G,\{v\},\{w\})$.
\end{proof}





\subsection{$\mathcal F$-factors}

Propositions \ref{prop:union} and \ref{prop:monotonicity} allow us to define a type of strategy for Dominator based on a decomposition of the graph into small graphs of outcome $\D$. We formalize these strategies using the notion of $\mathcal F$-factors (see for example \cite{factor,HELL,hell2}).

\begin{definition}
Let $\mathcal F$ be a family of graphs. Let $G$ be a graph.
    An {\em $\mathcal F$-factor} of $G$ is a spanning subgraph $H$ such that each connected component of $H$ is isomorphic to some graph in $\mathcal{F}$.
\end{definition}

\begin{proposition}\label{prop:factorD}
Let $G$ be a graph. If $G$ has an $\mathcal F$-factor where all the elements of $\mathcal F$ have outcome $\D$, then $G$ has outcome $\D$.
\end{proposition}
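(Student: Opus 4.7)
The plan is to combine the two tools that the preliminaries provided, namely Proposition~\ref{prop:union} (disjoint union preserves outcome $\D$) and Proposition~\ref{prop:monotonicity} (adding edges to a spanning subgraph preserves outcome $\D$). The claim is really just their composition.

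First I would argue that the $\mathcal{F}$-factor $H$ itself has outcome $\D$. Write $H = H_1 \cup H_2 \cup \cdots \cup H_k$ as the disjoint union of its connected components. By the definition of an $\mathcal{F}$-factor, each $H_i$ is isomorphic to some element of $\mathcal{F}$, hence satisfies $o(H_i)=\D$ by hypothesis. A straightforward induction on $k$ using Proposition~\ref{prop:union} at each step then yields $o(H) = \D$: the base case $k=1$ is immediate, and for the inductive step one applies Proposition~\ref{prop:union} to $H_1 \cup \cdots \cup H_{k-1}$ (which has outcome $\D$ by induction) and $H_k$, noting that these are defined on disjoint sets of vertices.

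Next, since $H$ is an $\mathcal{F}$-factor of $G$, it is by definition a spanning subgraph of $G$. Proposition~\ref{prop:monotonicity} applied to the pair $(G,H)$ then gives $o(G) = \D$, concluding the proof.

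There is no real obstacle here: the statement is essentially a packaging of Propositions~\ref{prop:union} and~\ref{prop:monotonicity} into a convenient form that will be invoked repeatedly in Sections~\ref{sec:12factor} and~\ref{sec:interval}. The only thing worth being careful about is the trivial case in which the $\mathcal{F}$-factor has zero components, which does not occur since $G$ is assumed to be a (nonempty) graph and an $\mathcal{F}$-factor is spanning.
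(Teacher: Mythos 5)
Your proof is correct and follows exactly the paper's argument: apply Proposition~\ref{prop:union} across the components of the factor to get $o(H)=\D$, then Proposition~\ref{prop:monotonicity} to pass from the spanning subgraph $H$ to $G$. The explicit induction on the number of components is a harmless elaboration of what the paper leaves implicit.
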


\begin{proof}
Let $\mathcal F$ be a family of graphs of outcome $\D$ and $H$ be an $\mathcal F$-factor of $G$. Since $H$ is a union of graphs of outcome $\D$, by Proposition \ref{prop:union}, it has outcome $\D$. Since it is a spanning subgraph of $G$, by Proposition \ref{prop:monotonicity}, $G$ has outcome $\D$.
\end{proof}

Depending on the choice of $\mathcal F$, we obtain different strategies for Dominator.
For example, if one takes for $\mathcal F$ the smallest graph of outcome $\D$, the graph $K_2$, then a $\{K_2\}$-factor is simply a perfect matching. Proposition \ref{prop:factorD} says that if a graph has a perfect matching, then it has outcome $\D$ \cite{MBdomgame}.
Other simple graphs have outcome $\D$ like any (odd) cycle (see \cite{MBdomgame}). Combining $K_2$ and odd cycles we obtain {\em perfect $[1,2]$-factors} that will be  investigated more in Section \ref{sec:12factor}.

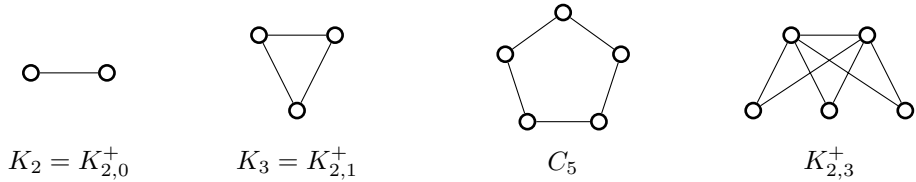
\begin{figure}[h]
    \centering
    \begin{tikzpicture}

 \begin{scope}[shift={(0,0)}]
    \node[noeud](x) at (0,-0.5) {};
    \node[noeud](y) at (1,-0.5) {};
  
    \draw (x)--(y);

    \node at (0.5,-1.7) {$K_2=K^+_{2,0}$};
    \end{scope}

 \begin{scope}[shift={(3,0)}]
    \node[noeud](x) at (0,0) {};
    \node[noeud](y) at (1,0) {};
    \node[noeud](1) at (0.5,-1) {};

    \draw (x)--(y) --(1)--(x);

    \node at (0.5,-1.7) {$K_3=K^+_{2,1}$};
    \end{scope}

     \begin{scope}[shift={(7,-0.5)}]
    \node[noeud](x) at (18:0.8) {};
    \node[noeud](y) at (90:0.8) {};
    \node[noeud](1) at (162:0.8) {};
  \node[noeud](2) at (234:0.8) {};
  \node[noeud](3) at (306:0.8) {};

    \draw (x)--(y) --(1)--(2)--(3)--(x);

    \node at (0,-1.2) {$C_5$};
    \end{scope}
    
    \begin{scope}[shift={(10,0)}]
    \node[noeud](x) at (0,0) {};
    \node[noeud](y) at (1,0) {};
    \node[noeud](1) at (-0.5,-1) {};
    \node[noeud](2) at (0.5,-1) {};
    \node[noeud](3) at (1.5,-1) {};

    \draw (x)--(y) --(1)--(x)--(2)--(y)--(3)--(x);

    \node at (0.5,-1.7) {$K^+_{2,3}$};
    \end{scope}
    
    \end{tikzpicture}
    \caption{Small graphs of outcome $\D$ used for covering strategies.}
    \label{fig:smallD}
\end{figure}

We will consider in Section \ref{sec:interval} another family of graphs of outcome $\D$ that permits us to express the pairing strategies used in \cite{MBdomgame} in our framework. Let $\ell$ be an integer. We define the graph $K^+_{2,\ell}$ as the complete bipartite graph $K_{2,\ell}$ with an additional edge between the two vertices in the part of size 2. By extension, $K^+_{2,0}$ denotes $K_2$. See Figure \ref{fig:smallD} for an illustration. Since this graph has two universal vertices, Dominator can take one of the universal vertices at her first turn and thus it has outcome $\D$. We will see in Section~\ref{sec:interval} that a $\{K^+_{2,\ell}, \ell\geq 0\}$-factor actually corresponds to a pairing dominating set where the pairs of vertices are adjacent. We will in particular prove that an interval graph $G$ has outcome $\D$ if and only if it has such a factor.

Note that if one can compute in polynomial time a strategy for Dominator as second player for each element of $\mathcal F$, then once we have an $\mathcal F$-factor, one can find a winning strategy for Dominator in polynomial time: it suffices to follow the strategy in the component where Staller has played. This will be the case for all the graphs of outcome $\D$ which we are considering in this paper.

\section{Perfect $[1,2]$-factors}\label{sec:12factor}

In this section we focus on covering strategies with edges and cycles, i.e. on $\mathcal F$-factors with $\mathcal F=\{K_2\}\cup\{C_n,n\geq 3\}$. Such factors are also called
{\em perfect $[1,2]$-factors} \cite{factor}. 
We first give a characterisation of graphs admitting perfect $[1,2]$-factors using perfect matchings in a special bipartite graph \cite{Tutte}. This characterisation has two consequences. First, deciding if a graph has a perfect $[1,2]$-factor and exhibiting one if it exists is polynomial. Second, regular graphs always have a perfect $[1,2]$-factor, and thus outcome $\D$.
Then we prove that having a perfect $[1,2]$-factor is equivalent to outcome $\D$ in a class of graphs that contains block and outerplanar graphs. As a consequence, deciding if a graph has outcome $\D$ can be computed in polynomial time in these classes.

\subsection{Perfect $[1,2]$-factors as matchings}

A {\em $[1,2]$-factor} is a spanning subgraph where all the vertices have degree 1 or 2, i.e. all the connected components are cycles or paths. It is {\em perfect} if all the connected components are regular. Thus a perfect $[1,2]$-factor must have all its connected components isomorphic to an edge or a cycle. This is exactly a $\{K_2,C_n:n\geq 3\}$-factor.
Since any cycle has outcome $\D$ \cite{MBdomgame}, by Proposition~\ref{prop:factorD}, if a graph $G$ has a perfect $[1,2]$-factor, then it has outcome $\D$.

\begin{lemma}\label{LemPer12fact}
    Let $G$ be a graph that admits a perfect $[1,2]$-factor. Then $o(G)=\D$.
\end{lemma}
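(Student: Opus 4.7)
The plan is to recognize that a perfect $[1,2]$-factor is exactly an $\mathcal{F}$-factor for the family $\mathcal{F} = \{K_2\} \cup \{C_n : n \geq 3\}$, and then invoke Proposition~\ref{prop:factorD}. Because Proposition~\ref{prop:factorD} is already established, the entire argument reduces to checking its hypothesis: that every member of $\mathcal{F}$ has outcome $\D$.

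First I would dispose of $K_2$: on two adjacent vertices, whoever Dominator plays, she claims a vertex whose closed neighborhood is the entire graph, so $o(K_2) = \D$ trivially (this also follows immediately from the fact that $K_2$ is a perfect matching on itself, already noted earlier in the excerpt). Then I would appeal to the result from \cite{MBdomgame} that every cycle $C_n$ (for $n \geq 3$) has outcome $\D$; the excerpt explicitly cites this in the sentence preceding the lemma, so no re-derivation is needed.

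With both cases verified, every connected component of the given perfect $[1,2]$-factor $H$ of $G$ is a graph of outcome $\D$. Proposition~\ref{prop:factorD} then yields $o(G) = \D$ directly, concluding the proof.

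Since the cycle case is cited from previous work and the $K_2$ case is immediate, there is no real obstacle — the lemma is essentially a specialization of Proposition~\ref{prop:factorD}, and its purpose in the paper is to name this instance explicitly so that it can be referenced throughout Section~\ref{sec:12factor}. If I wanted the proof to be self-contained without citing \cite{MBdomgame} for cycles, the only non-trivial step would be to exhibit a pairing strategy on $C_n$ (for example, pairing each vertex of an odd cycle with one of its two neighbors via a pairing dominating set of size $\lceil n/2 \rceil$), but this lengthens the proof without adding content.
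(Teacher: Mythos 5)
Your proof is correct and takes essentially the same route as the paper, whose argument (given in the sentence immediately preceding the lemma) is precisely to observe that a perfect $[1,2]$-factor is a $\{K_2,C_n: n\geq 3\}$-factor, cite \cite{MBdomgame} for cycles having outcome $\D$, and apply Proposition~\ref{prop:factorD}. One caveat on your optional aside only: an odd cycle $C_n$ does not admit a pairing dominating set (you cannot even form $\lceil n/2\rceil$ disjoint pairs among $n$ vertices when $n$ is odd, and e.g.\ $C_5$ has no PDS), so a self-contained treatment of the cycle case would need the actual game-theoretic strategy from \cite{MBdomgame} rather than a pairing argument.
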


Tutte \cite{Tutte} gives a characterisation of graphs having a perfect $[1,2]$-factor using the incidence bipartite graph $B(G)$ of $G$: the vertices of $B(G)$ are two copies $V_1$ and $V_2$ of the vertices of $G$. There is an edge between $u_1\in V_1$ and $u_2\in V_2$ if and only if the vertices corresponding to $u_1$ and $u_2$ are adjacent in $G$.
 A nice proof of the following result can be found in \cite{factor}.

 \begin{theorem}\cite{factor,Tutte}\label{TheTutte}
     Let $G$ be a graph. Then $G$ has a perfect $[1,2]$-factor if and only if $B(G)$ has a perfect matching. Moreover, one can find the perfect $[1,2]$-factor of $G$ from the perfect matching of $B(G)$ in polynomial time.
 \end{theorem}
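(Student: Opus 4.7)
The plan is to prove the theorem by translating perfect matchings of $B(G)$ into permutations of $V(G)$ whose cycle structure directly encodes a perfect $[1,2]$-factor. The key bijective device is that a perfect matching $M$ of $B(G)$ is in one-to-one correspondence with a permutation $\sigma\colon V(G)\to V(G)$ satisfying $u\sigma(u)\in E(G)$ for every $u$: simply set $\sigma(u)=v$ whenever $u_1v_2\in M$. Note that $\sigma(u)\neq u$ because $B(G)$ contains no edge $u_1u_2$ (the definition requires $u$ to be adjacent to itself in the simple graph $G$, which is impossible).

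For the forward direction, assume $G$ has a perfect $[1,2]$-factor $H$. Define $\sigma$ componentwise. On a $K_2$-component $\{u,v\}$, put $\sigma(u)=v$ and $\sigma(v)=u$. On a cycle-component $v_0v_1\cdots v_{k-1}v_0$, choose a cyclic orientation and put $\sigma(v_i)=v_{i+1\bmod k}$. Then $\sigma$ is a permutation of $V(G)$ and each pair $u,\sigma(u)$ is an edge of $H\subseteq G$. The corresponding matching $\{\,u_1\sigma(u)_2 : u\in V(G)\,\}$ is thus a perfect matching of $B(G)$.

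For the backward direction, assume $B(G)$ has a perfect matching and let $\sigma$ be the associated permutation. Decompose $\sigma$ into its disjoint cycles $(u_0\ u_1\ \cdots\ u_{k-1})$. Since $u_i\sigma(u_i)=u_iu_{i+1\bmod k}$ lies in $E(G)$, each cycle of $\sigma$ yields a closed walk in $G$ on distinct vertices (the vertices of a permutation-cycle are distinct by definition). A cycle of length $2$ contributes a single edge $u_0u_1\in E(G)$, i.e.\ a $K_2$-component; a cycle of length $k\geq 3$ contributes a genuine cycle $C_k$ in $G$ (the $k$ edges $u_iu_{i+1\bmod k}$ are distinct because any two of them share at most one endpoint). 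Taking the union over all cycles of $\sigma$ produces a spanning subgraph $H$ whose components are each isomorphic to a $K_2$ or a $C_k$, i.e.\ a perfect $[1,2]$-factor.

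Both translations are constructive: computing $\sigma$ from $M$ (or the reverse) is $O(|V|)$, and decomposing a permutation into its cycles is linear. Since a perfect matching of a bipartite graph can be found in polynomial time, the ``moreover'' assertion is immediate. The main point to be careful about is the treatment of $2$-cycles of $\sigma$ versus longer cycles, and verifying that the edges induced by a longer cycle of $\sigma$ are pairwise distinct so that an honest $C_k$ subgraph (rather than a degenerate closed walk) is obtained; both are straightforward once one observes that the vertices inside a single permutation-cycle are distinct.
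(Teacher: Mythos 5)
Your proof is correct and is essentially the standard argument behind the cited result (which the paper itself does not reprove): a perfect matching of $B(G)$ is the same thing as a permutation $\sigma$ of $V(G)$ with $u\sigma(u)\in E(G)$ and no fixed points, and the cycle decomposition of $\sigma$ (2-cycles giving $K_2$'s, longer cycles giving genuine $C_k$'s since their $k$ edges are pairwise distinct for $k\geq 3$) is exactly a perfect $[1,2]$-factor. The constructive translations and the polynomial-time claim are handled correctly.
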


This theorem has two important consequences. First, since deciding if a graph has a perfect matching is polynomial (using for example Ford-Fulkerson algorithm, \cite{ford1956maximal}), deciding if a graph has a perfect $[1,2]$-factor is also polynomial.

\begin{corollary}\label{Cor12FactorPol}
 Given a graph $G$, deciding if $G$ has a perfect $[1,2]$-factor is polynomial. Moreover, if $G$ has such a factor, then it can be computed in polynomial time.
\end{corollary}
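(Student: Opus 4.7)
The plan is to derive the corollary as a direct algorithmic consequence of Theorem~\ref{TheTutte}. The key observation is that each of the three steps involved---building $B(G)$, testing bipartite $B(G)$ for a perfect matching, and translating that matching back into a perfect $[1,2]$-factor of $G$---can be carried out in polynomial time.

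First I would construct the incidence bipartite graph $B(G)$ explicitly from $G$ in time $O(|V(G)|+|E(G)|)$: duplicate the vertex set into copies $V_1$ and $V_2$ and, for every edge $uv\in E(G)$, insert both edges $u_1v_2$ and $v_1u_2$ in $B(G)$. Then I would invoke a standard polynomial-time bipartite matching algorithm (for instance Hopcroft--Karp, or Ford--Fulkerson as cited above \cite{ford1956maximal}) to decide whether $B(G)$ has a perfect matching, in time polynomial in $|V(B(G))|=2|V(G)|$.

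By Theorem~\ref{TheTutte}, $G$ admits a perfect $[1,2]$-factor if and only if $B(G)$ admits a perfect matching; this equivalence settles the decision problem in polynomial time. For the constructive part, the same theorem guarantees that when a perfect matching $M$ of $B(G)$ is produced by the matching algorithm, one can extract a perfect $[1,2]$-factor of $G$ from $M$ in polynomial time. Hence, by concatenating the three polynomial-time procedures, both the decision and the construction are achieved within an overall polynomial running time.

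There is no real obstacle here: the corollary is essentially a bookkeeping statement combining the structural equivalence of Theorem~\ref{TheTutte} with the well-known polynomiality of maximum bipartite matching. The only point that would deserve a sentence of justification is that the conversion from $M$ to the factor is effective and polynomial, but this is exactly what the second sentence of Theorem~\ref{TheTutte} provides.
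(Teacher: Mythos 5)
Your proposal is correct and follows exactly the paper's route: it derives the corollary from Theorem~\ref{TheTutte} together with the polynomial-time solvability of bipartite perfect matching (e.g.\ Ford--Fulkerson), with the constructive part handed off to the second sentence of that theorem. You simply spell out the construction of $B(G)$ in a bit more detail than the paper does.
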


Second, using Hall's theorem, there is a sufficient condition on the neighborhoods of $B(G)$ for $G$ to have a perfect $[1,2]$-factor.

\begin{theorem}[Hall's theorem \cite{hall}]\label{TheHall}
    Let $B=(X\cup Y,E)$ be a bipartite graph with bipartition to $X$ and $Y$.
    There is a perfect matching in $B$ if and only if $|X|=|Y|$ and, for any subset $X'$ of $X$, $|N(X')|\geq |X'|$.
\end{theorem}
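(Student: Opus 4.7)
The plan is to prove Hall's theorem by induction on $|X|$. The necessity direction is immediate: if $M$ is a perfect matching of $B$, then each vertex of $X$ is matched to a distinct vertex of $Y$, which forces $|X|=|Y|$, and the $M$-partners of any $X'\subseteq X$ form $|X'|$ distinct vertices of $N(X')$. All the effort goes into sufficiency.

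The base case $|X|=1$ is handled directly: the single vertex $x$ has $|N(\{x\})|\geq 1$, so it admits a matching partner. For the inductive step, I would split into two cases depending on whether Hall's inequality is ever tight on a proper nonempty subset of $X$. In Case~A, suppose $|N(X')|>|X'|$ for every nonempty $X'\subsetneq X$. Pick any $x\in X$ and any $y\in N(x)$, match $x$ to $y$, and delete both vertices to obtain $B'$. For every $X''\subseteq X\setminus\{x\}$, the new neighborhood $N_{B'}(X'')$ loses at most the single vertex $y$, so $|N_{B'}(X'')|\geq |N_B(X'')|-1$. Since $X''$ is a proper subset of $X$, the strict inequality gives $|N_B(X'')|\geq |X''|+1$, so Hall's condition persists in $B'$ and induction supplies a matching that extends $\{x,y\}$ to a perfect matching of $B$.

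In Case~B, there is a nonempty proper $X^*\subsetneq X$ with $|N(X^*)|=|X^*|$. I would then split $B$ into two smaller bipartite graphs: $B_1$ on $X^*\cup N(X^*)$, and $B_2$ on $(X\setminus X^*)\cup (Y\setminus N(X^*))$. Hall's condition on $B_1$ is inherited by restriction, because $N_{B_1}(X')=N_B(X')$ for $X'\subseteq X^*$. For $B_2$, given $X''\subseteq X\setminus X^*$, I use $N_{B_2}(X'')=N_B(X'')\setminus N_B(X^*)$; applying the original Hall condition to $X''\cup X^*$ gives $|N_B(X''\cup X^*)|\geq |X''|+|X^*|$, and since $N_B(X''\cup X^*)=N_B(X'')\cup N_B(X^*)$ with $|N_B(X^*)|=|X^*|$, we get $|N_{B_2}(X'')|\geq |X''|$. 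The induction hypothesis then produces perfect matchings of $B_1$ and $B_2$, whose union is a perfect matching of $B$.

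The only delicate step is the verification of Hall's condition on $B_2$ in Case~B, where one has to combine the hypothesis applied to the larger set $X''\cup X^*$ with the tightness on $X^*$ in exactly the right way; the rest is bookkeeping. An alternative route would go through the max-flow min-cut theorem, or through König's theorem on vertex covers in bipartite graphs, but the induction above is the most self-contained and the one I would write down.
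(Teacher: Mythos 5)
Your proof is correct: it is the standard inductive proof of Hall's theorem, with necessity immediate and sufficiency split according to whether the Hall inequality is tight on some nonempty proper subset; both the vertex-deletion step in Case~A and the decomposition into $B_1$ and $B_2$ in Case~B (including the inclusion--exclusion computation showing $|N_{B_2}(X'')|\geq |X''|$) are carried out correctly, and the side conditions $|X\setminus X^*|=|Y\setminus N(X^*)|$ needed to invoke the induction hypothesis do follow from $|X|=|Y|$ and the tightness of $X^*$. The paper does not prove this statement -- it is a classical result cited from the literature -- so there is no in-paper argument to compare against.
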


In our setting, it means that  there is a perfect $[1,2]$-factor in $G$ if and only if for every subset of vertices $X$, the number of neighbors of vertices in $X$ is at least $|X|$.
This is in particular the case for regular graphs. 

\begin{corollary}
    Let $G$ be an $r$-regular graph with $r\geq 1$. Then $G$ has a perfect $[1,2]$-factor and thus, has outcome $\D$. Moreover, the winning strategy for Dominator can be computed in polynomial time.
 \end{corollary}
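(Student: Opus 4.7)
The plan is to invoke Theorem~\ref{TheTutte} so that producing a perfect $[1,2]$-factor reduces to producing a perfect matching in the incidence bipartite graph $B(G)$, and then verify Hall's condition (Theorem~\ref{TheHall}) on $B(G)$ using regularity. First I would note that both sides of $B(G)$ have size $n=|V(G)|$, so only the neighborhood inequality needs to be checked. Take any subset $X'\subseteq V_1$ and count edges between $X'$ and $N(X')\subseteq V_2$: since $G$ is $r$-regular, each vertex of $X'$ contributes exactly $r$ edges, for a total of $r|X'|$; on the other hand, each vertex of $N(X')$ has degree $r$ in $B(G)$, so the same edges number at most $r|N(X')|$. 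Hence $r|N(X')|\geq r|X'|$, and since $r\geq 1$ this gives $|N(X')|\geq |X'|$.

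Hall's condition being satisfied, $B(G)$ has a perfect matching, and Theorem~\ref{TheTutte} then delivers a perfect $[1,2]$-factor of $G$. Applying Lemma~\ref{LemPer12fact} (or directly Proposition~\ref{prop:factorD} with $\mathcal F=\{K_2\}\cup\{C_n:n\geq 3\}$) yields $o(G)=\D$.

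For the algorithmic part, I would rely on Corollary~\ref{Cor12FactorPol}: the perfect $[1,2]$-factor itself is computable in polynomial time via the Ford-Fulkerson-type matching algorithm in $B(G)$. Once the factor is in hand, Dominator's strategy is obtained component by component: she answers in the unique component of the factor in which Staller has just played, following the known polynomial winning strategies for $K_2$ (take the remaining endpoint) and for cycles (see \cite{MBdomgame}). The overall response is clearly computable in polynomial time per move.

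I do not expect any real obstacle here, since the corollary is essentially a direct combination of Theorem~\ref{TheTutte}, Theorem~\ref{TheHall}, Lemma~\ref{LemPer12fact} and Corollary~\ref{Cor12FactorPol}; the only content lies in the simple double-counting argument that verifies Hall's condition for $B(G)$, and I would take some care that the argument genuinely uses $r\geq 1$ (if $r=0$ the graph has isolated vertices and the statement would fail, so this hypothesis is needed precisely to ensure the edge count is informative).
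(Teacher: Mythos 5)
Your proposal is correct and follows essentially the same route as the paper: verify Hall's condition for $B(G)$ by the double-counting argument using $r$-regularity, then combine Theorem~\ref{TheTutte}, Lemma~\ref{LemPer12fact} and Corollary~\ref{Cor12FactorPol}. The only cosmetic difference is that you count edges directly in $B(G)$ while the paper counts the multiset of neighbors in $G$ itself; these are the same computation.
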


\begin{proof}
    If $G$ is $r$-regular, then, for any set of vertices $X$, we have  $|N(X)|\geq|X|$. Indeed, there are $r$ edges adjacent to each vertex in $X$ and hence, the multiset of neighbors of vertices in $X$ has $r|X|$ vertices. Since $G$ is $r$-regular, each vertex in the multiset can be adjacent to at most $r$ vertices in $X$. Thus, $|N(X)|\geq|X|$. Hence, $B(G)$ satisfies Hall's condition and thus has a perfect matching. Hence, $G$ admits a perfect $[1,2]$-factor by Theorem \ref{TheTutte} that can be computed in polynomial time by Corollary \ref{Cor12FactorPol}. In particular, $G$ has outcome $\D$ by Lemma \ref{LemPer12fact} and the winning strategy for Dominator can be computed in polynomial time.  
\end{proof}

Note that this result is tight in the sense that there exist graphs with $\Delta(G) = \delta(G) + 1$  such that Dominator loses as the second player. See for example the graphs of Figure~\ref{fig:delta+1}.

\begin{figure}[h]
    \centering
    \begin{tikzpicture}
        \node at (0,0){
            \begin{tikzpicture}
                \node[noeud] (u) at (0,0){};
                \node[noeud] (v) at (1,0){};
                \node[noeud] (w) at (2,0){};
                \draw (u) -- (v) -- (w);
            \end{tikzpicture}
        };

        \node at (0,-1){$G_1$};
        
        \node at (5,0){
            \begin{tikzpicture}
                \node[noeud] (u1) at (-1,0){};
                \node[noeud] (u2) at (-1.7,0.7){};
                \node[noeud] (u3) at (-2.4,0){};
                \node[noeud] (u4) at (-1.7,-0.7){};
                \node[noeud] (w) at (0,0) {};
                \node[noeud] (v1) at (1,0){};
                \node[noeud] (v2) at (1.7,0.7){};
                \node[noeud] (v3) at (2.4,0){};
                \node[noeud] (v4) at (1.7,-0.7){};

                \draw (u1) -- (u2) -- (u3) -- (u4) -- (u1) -- (w) -- (v1) -- (v2) -- (v3) -- (v4) -- (v1);
            \end{tikzpicture}
        };
        
        \node at (5,-1){$G_2$};
    \end{tikzpicture}
    \caption{Examples of graphs for which $\Delta = \delta + 1$ and Dominator loses as the second player.}
    \label{fig:delta+1}
\end{figure}
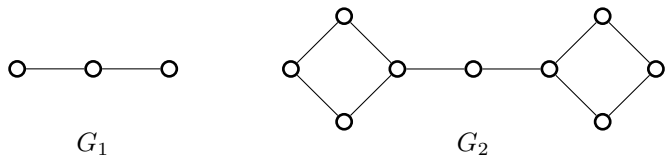

\subsection{The cut-factor property}

We say that a subgraph $F$ of $G$ is a \emph{partial perfect $[1,2]$-factor}  if every component of $F$ is $1$- or $2$-regular. We say that a partial perfect $[1,2]$-factor is {\em maximal} if there does not exist another partial perfect $[1,2]$-factor $F'$ of $G$ such that $|V(F)|<|V(F')|$. Furthermore, a subgraph $F$ of $G$ is a \emph{cut-factor} if it is a partial perfect $[1,2]$-factor such that it is maximal but not perfect and 
for some cut-vertex  $u\in  V(F)$ with $w\in N_F(u)$, there exists a vertex $v\in N(u)\setminus V(F)$ of $G$ such that $G-u$ has vertex $v$ in some component $G_v$ and vertex $w$ in another component $G_w$. In this case, vertex $u$ is called \emph{cut-factor vertex} of $F$. See Figure \ref{fig:exemple_cut-factor} for an example of a cut-factor.
We say that  a class of graphs $\mathcal{G}$  {\em satisfies the cut-factor property} if every graph 
$G \in \mathcal{G}$ either has an isolated vertex, or admits a perfect $[1,2]$-factor or has a cut-factor $F$.

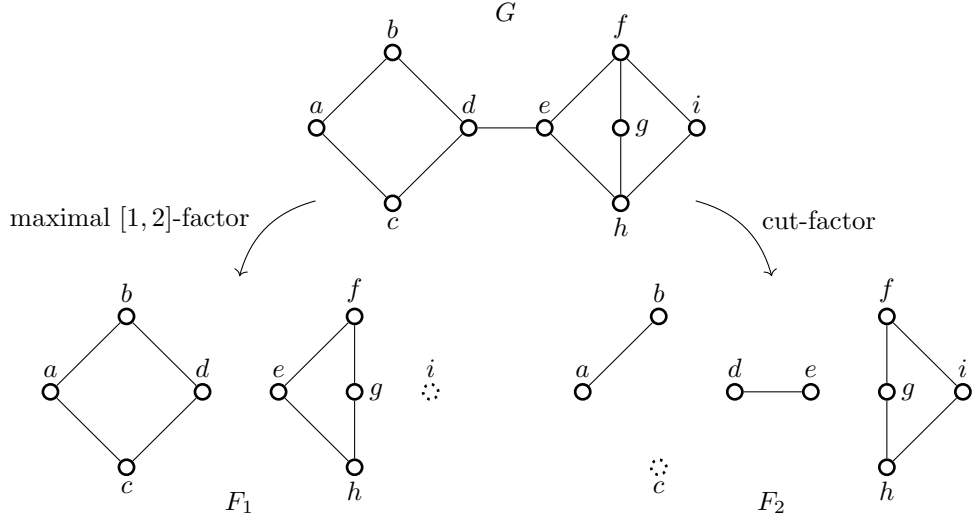
\begin{figure}[h]
    \centering
    \begin{tikzpicture}
        \node at (0,1.5){$G$};
        \node at (0,0){
        \begin{tikzpicture}
            \node[noeud](a) at (0,0){};
            \node[above = 2pt] at (a){$a$};
            \node[noeud](b) at (1,1){};
            \node[above = 2pt] at (b){$b$};
            \node[noeud](c) at (1,-1){};
            \node[below = 2pt] at (c){$c$};
            \node[noeud](d) at (2,0){};
            \node[above = 2pt] at (d){$d$};
            \node[noeud](e) at (3,0){};
            \node[above = 2pt] at (e){$e$};
            \node[noeud](f) at (4,1){};
            \node[above = 2pt] at (f){$f$};
            \node[noeud](g) at (4,0){};
            \node[right = 2pt] at (g){$g$};
            \node[noeud](h) at (4,-1){};
            \node[below = 2pt] at (h){$h$};
            \node[noeud](i) at (5,0){};
            \node[above = 2pt] at (i){$i$};

            \draw (d) -- (b) -- (a) -- (c) -- (d) -- (e) -- (f) -- (g) -- (h) -- (e);
            \draw (f) -- (i) -- (h);
        \end{tikzpicture}
        };

        \draw[->] (-2.5,-1) to [bend right] (-3.5,-2);
        \node[anchor = east] at (-3.25,-1.25){maximal $[1,2]$-factor}; 
        \draw[->] (2.5,-1) to [bend left] (3.5,-2);
        \node[anchor = west] at (3.25,-1.25){cut-factor};

        \node at (-3.5,-5){$F_1$};
        \node at (-3.5,-3.5){
        \begin{tikzpicture}
            \node[noeud](a) at (0,0){};
            \node[above = 2pt] at (a){$a$};
            \node[noeud](b) at (1,1){};
            \node[above = 2pt] at (b){$b$};
            \node[noeud](c) at (1,-1){};
            \node[below = 2pt] at (c){$c$};
            \node[noeud](d) at (2,0){};
            \node[above = 2pt] at (d){$d$};
            \node[noeud](e) at (3,0){};
            \node[above = 2pt] at (e){$e$};
            \node[noeud](f) at (4,1){};
            \node[above = 2pt] at (f){$f$};
            \node[noeud](g) at (4,0){};
            \node[right = 2pt] at (g){$g$};
            \node[noeud](h) at (4,-1){};
            \node[below = 2pt] at (h){$h$};
            \node[noeud, dotted](i) at (5,0){};
            \node[above = 2pt] at (i){$i$};

            \draw (d) -- (b) -- (a) -- (c) -- (d);
            \draw (e) -- (f) -- (g) -- (h) -- (e);
        \end{tikzpicture}
        };

        \node at (3.5,-5){$F_2$};
        \node at (3.5,-3.5){
        \begin{tikzpicture}
            \node[noeud](a) at (0,0){};
            \node[above = 2pt] at (a){$a$};
            \node[noeud](b) at (1,1){};
            \node[above = 2pt] at (b){$b$};
            \node[noeud, dotted](c) at (1,-1){};
            \node[below = 2pt] at (c){$c$};
            \node[noeud](d) at (2,0){};
            \node[above = 2pt] at (d){$d$};
            \node[noeud](e) at (3,0){};
            \node[above = 2pt] at (e){$e$};
            \node[noeud](f) at (4,1){};
            \node[above = 2pt] at (f){$f$};
            \node[noeud](g) at (4,0){};
            \node[right = 2pt] at (g){$g$};
            \node[noeud](h) at (4,-1){};
            \node[below = 2pt] at (h){$h$};
            \node[noeud](i) at (5,0){};
            \node[above = 2pt] at (i){$i$};

            \draw (a) -- (b);
            \draw (d) -- (e);
            \draw (f) -- (i) -- (h) -- (g) -- (f);
        \end{tikzpicture}
        };
        
    \end{tikzpicture}
    \caption{Both $F_1$ and $F_2$ are maximal $[1,2]$-partial factors of $G$. However, only $F_2$ is a cut-factor of $G$ as $i$, the only vertex not in $F_1$, is not adjacent to a cut-vertex in $G$ but $c$, the only vertex not in $F_2$, is adjacent to $d$ which is a cut-vertex. Moreover, in $G-d$ the vertex $e$, the neighbor of $d$ in $F_2$, is not in the same connected component as $c$.}
    \label{fig:exemple_cut-factor}
\end{figure}

To illustrate the notion of cut-factor, consider cut-factors in trees. If a tree $T$ does not admit a perfect $[1,2]$-factor (i.e a perfect matching) and is not an isolated vertex, it admits a cut-factor. Indeed, consider a maximal matching $M$ in $T$. When $M$ is not perfect, consider $x$ that is not covered by $M$. Then any vertex $u$ adjacent to $x$ must be covered by $M$ and be a cut-factor vertex since in $T-u$ the vertex paired with $u$ in $M$ cannot be in the same component than $x$. Hence, trees have the cut-factor property. Moreover, they are also closed by induced subgraphs. Thus, if we consider the following lemma for trees, then we obtain the known result that we have $o(T)=\D$ if and only if $T$ has a perfect matching  \cite{MBdomgame}.

\begin{lemma}\label{LemPVMP}
    Let $\mathcal{G}$ be a class of graphs, closed by induced subgraphs, that satisfies the cut-factor property.   
    Then, for any graph $G\in \mathcal{G}$, we have $o(G)=\D$ if and only if $G$ admits a perfect $[1,2]$-factor.
\end{lemma}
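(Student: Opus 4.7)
The plan is to prove the lemma by induction on $|V(G)|$, with the reverse implication immediate from Lemma~\ref{LemPer12fact}. For the forward implication I argue the contrapositive: if $G$ has no perfect $[1,2]$-factor, then Staller has a winning strategy as first player, so $o(G)\neq \D$. The induction hypothesis states that for every $G'\in\mathcal{G}$ with $|V(G')|<|V(G)|$, $o(G')=\D$ if and only if $G'$ admits a perfect $[1,2]$-factor.

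For the inductive step I use the trichotomy supplied by the cut-factor property. If $G$ has an isolated vertex, Staller plays it first and wins immediately. If $G$ admits a perfect $[1,2]$-factor, the forward direction is vacuous. Otherwise, fix a cut-factor $F$ with cut-factor vertex $u$, factor-neighbor $w\in N_F(u)$, and uncovered neighbor $v\in N(u)\setminus V(F)$, and let $G_v,G_w$ be the distinct components of $G-u$ containing $v$ and $w$. Staller's strategy is to play $u$ first. Since $u$ is a cut-vertex of $G$ now claimed by Staller, any vertex of a component of $G-u$ can be dominated only by vertices from within that same component, so the remaining game decouples into independent sub-games on the components of $G-u$. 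After Dominator's first response $x$, Staller commits to her winning first-player strategy on $G_w$ if $x\notin G_w$, and on $G_v$ if $x\in G_w$; Dominator's moves in the other components act as free turns for Staller in the chosen sub-game, which in a Maker-Breaker setting can only benefit her.

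It remains to show that neither $G_v$ nor $G_w$ admits a perfect $[1,2]$-factor, so that the induction hypothesis (applicable since $\mathcal{G}$ is closed under induced subgraphs and both $G_v,G_w$ are strictly smaller than $G$) yields Staller's required winning sub-strategies. For $G_v$ the argument is direct: from any hypothetical perfect $[1,2]$-factor $H_v$ of $G_v$, the subgraph $H_v\cup F|_{V(G)\setminus V(G_v)}$ is a partial $[1,2]$-factor of $G$ covering at least the previously uncovered vertex $v$ beyond $V(F)$, contradicting the maximality of $F$. The main obstacle is the analogous claim for $G_w$: restricting $F$ to $V(G)\setminus V(G_w)$ destroys the entire $F$-component through $u$, because that component is either the edge $uw$ or a cycle $u-w-\cdots-w'-u$ whose non-$u$ vertices form a connected path in $G-u$ and therefore all lie in $G_w$; consequently $u$ itself becomes uncovered by the restricted factor. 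The definition of cut-factor is tailored precisely to remedy this: from any hypothetical perfect $[1,2]$-factor $H_w$ of $G_w$ one constructs $F':=H_w\cup F|_{V(G)\setminus V(G_w)\setminus\{u\}}\cup\{uv\}$ by adding the edge $uv$ as a new edge-component, using exactly the uncovered neighbor $v$ provided by the cut-factor definition. A short count gives $|V(F')|\geq|V(F)|+1$, contradicting maximality and closing the induction.
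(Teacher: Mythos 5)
Your proposal is correct and follows essentially the same route as the paper: induction on $|V(G)|$, Staller opening on the cut-factor vertex $u$, and the two maximality arguments showing that $G_v$ and $G_w$ have no perfect $[1,2]$-factor (including the same repair of the lost component through $u$ by adding the edge $uv$ as a new $K_2$-component). The only differences are cosmetic — you spell out the decoupling of the game across components of $G-u$ and the ``free moves only help Staller'' point slightly more explicitly than the paper does.
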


\begin{proof}
    Observe first that if a graph $G$ admits a perfect $[1,2]$-factor, then $o(G)=\D$ by Lemma~\ref{LemPer12fact}. Secondly, if $G$ has an isolated vertex, then it does not admit a perfect $[1,2]$-factor and Staller wins as the first player. Thus, we will consider graphs of $\mathcal{G}$ which have a cut-factor and prove that Staller wins being first in these graphs.

   We first observe that the only graph on three vertices that admits a cut-factor is the $3$-path $P_3$. In $P_3$, Staller wins playing first by claiming the cut-vertex.
   
    We prove the claim by induction on the number of vertices. Assume that the claim is true for every graph on strictly less than $n>3$ vertices. Let $G\in \mathcal{G}$ be a graph on $n>3$ vertices which admits a cut-factor $F$ and has cut-factor vertex $u$. 
    Furthermore, we denote by $w$ a vertex in $N_F(u)$ and by $v$ a vertex in $N(u)\setminus V(F)$ such that in $G-u$ vertices $v$ and $w$ belong to different connected components denoted by $G_v$ and $G_w$, respectively. Vertices $v$ and $w$ exist since $u$ is a cut-factor vertex. 

    Consider the component $G_v$. Assume that it contains a perfect $[1,2]$-factor $F_v$. However, now $F_v$ together with components of $F$ outside of $G_v$ form a partial perfect $[1,2]$-factor  $F_v'$ with $|V(F_v')|>|V(F)|$. Thus, $F$ is not a partial maximum perfect $[1,2]$-factor of $G$, a contradiction. Thus, $G_v$ does not admit a perfect $[1,2]$-factor.

    Consider next component $G_w$. Assume that it contains a perfect $[1,2]$-factor $F_w$. However, now $F_w$ together with $K_2$-component on vertices $u$ and $v$ and components of $F$ outside of $G_w$ form a partial perfect $[1,2]$-factor  $F_w'$ with $|V(F_w')|>|V(F)|$. Thus, $G_w$ does not admit a perfect $[1,2]$-factor.

     Consider the following strategy for Staller: he starts by claiming $u$. Since neither $G_w$ nor $G_v$ admits a perfect $[1,2]$-factor and since $G_v,G_w \in \mathcal G$, by induction, Staller wins as the first player in both $G_w$ and $G_v$. Thus, if Dominator plays in $G_w$ (respectively $G_v$) after Staller has claimed $u$, then Staller wins by following his strategy as the first player on $G_v$ (resp. $G_w$). 
\end{proof}

\subsection{Consequences for outerplanar and block graphs}

In the following lemma, we present a large class of graphs which has the cut-factor property.  Later we prove that it contains outerplanar and block graphs. Note that we cannot immediately apply Lemma \ref{LemPVMP} to the graph class of the following lemma since it is not closed under induced subgraphs.

\begin{lemma}\label{LemHamiltonianCom}
    Let $\mathcal{G}$ be a class of graphs such that every $2$-connected component on at least three vertices is Hamiltonian. Then graph class $\mathcal{G}$ has the cut-factor property.
\end{lemma}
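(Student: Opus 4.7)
Take $G \in \mathcal{G}$ with no isolated vertex and no perfect $[1,2]$-factor; the goal is to exhibit a cut-factor. First, $G$ must contain a cut-vertex, since otherwise it is $2$-connected, and if $|V(G)| \geq 3$ the Hamiltonian cycle granted by the hypothesis is already a perfect $[1,2]$-factor (while $|V(G)| \leq 2$ forces an isolated vertex or the edge $K_2$). Fix a maximum partial perfect $[1,2]$-factor $F$ and write $U := V(G) \setminus V(F) \neq \emptyset$. The plan is to select a leaf block $B$ of the block-cut tree, with unique cut-vertex $a \in V(B)$ and $V(B) \cap U \neq \emptyset$, and then produce a cut-factor from $F$ and $B$.

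\textbf{Leaf block case analysis.} If $|V(B)| = 2$, write $B = \{a,b\}$ with $b$ the uncovered pendant; maximality forces $a \in V(F)$, any $w \in N_F(a)$ satisfies $w \neq b$ and lies in a component of $G-a$ distinct from $\{b\}$, so $F$ itself is a cut-factor via $(a,w,b)$. If $|V(B)| \geq 3$, then $B$ admits a Hamiltonian cycle $H$. Since every non-$a$ vertex of $B$ belongs only to block $B$, its neighbors all lie in $V(B)$; hence the only $F$-component meeting $V(B)$ without being contained in $V(B)$ can be a single $K_2$-edge $\{aa'\}$ with $a' \notin V(B)$. Letting $F_B$ collect the $F$-components lying inside $V(B)$, consider the swap $F \mapsto F' := (F \setminus F_B \setminus \{aa'\}) \cup H$, where $\{aa'\}$ is dropped from the removal when $a$ is not paired outside $B$. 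With $k := |V(B) \cap U| \geq 1$, a direct count gives
\[
|V(F')| - |V(F)| = \begin{cases} k & \text{if } a \notin V(F) \text{ or } a \in V(F_B), \\ k-1 & \text{if } a \text{ is paired outside } B, \end{cases}
\]
so maximality of $F$ rules out the first case and forces $k = 1$ in the second. Let $v$ be the unique uncovered vertex of $V(B)$. If $v \in N(a)$, then $F$ is a cut-factor via $(a,a',v)$: by $2$-connectedness of $B$ the set $V(B) \setminus \{a\}$ lies in a single component of $G-a$, while $a'$ lies in a different one ($a$ being the only attachment of $B$ to $a'$'s subtree of the block-cut tree). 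If $v \notin N(a)$, then $F'$ is itself maximum with $a'$ now uncovered, and the same connectivity argument yields a cut-factor $(a,w,a')$ for $w$ either $H$-neighbor of $a$.

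\textbf{Main obstacle.} The delicate point is guaranteeing the existence of a leaf block $B$ with $V(B) \cap U \neq \emptyset$: a given maximum $F$ may leave all uncovered vertices inside interior blocks (for instance the triangle $\{a_1,v,a_2\}$ with pendants $b_1,b_2$ and $F = \{a_1b_1, a_2b_2\}$, where $v$ is the uncovered interior vertex). I would address this by running the same argument with $B$ allowed to be any block meeting $U$: for such $B$, the analogous Hamiltonian-cycle swap combined with counting gives $|V(B) \cap U| \leq |A|$, where $A$ is the set of cut-vertices of $B$ paired outside $B$; and for any $a \in A$ adjacent to some uncovered $v \in V(B)$, the block-cut tree again separates $V(B) \setminus \{a\}$ from $a'$ in $G-a$, so $(a,a',v)$ is a cut-factor witness. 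If no such $a \in A$ is adjacent to any uncovered vertex, the swap along $H$ relocates the uncovered vertices of $V(B)$ to the $a'_i$ outside $B$, which are pendant-like with respect to $B$, and one can recurse on the smaller block-cut subtrees attached at the $a_i$'s. Proving termination cleanly — either by the finiteness of the block-cut tree under this iteration, or by induction on $|V(G)|$ together with the observation that the relevant subgraphs remain in $\mathcal{G}$ because their blocks are blocks of $G$ — is the step that requires the most care.
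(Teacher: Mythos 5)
Your leaf-block analysis is essentially sound (modulo one overlooked configuration: the cut-vertex $a$ of the leaf block may lie on a cycle component of $F$ that lives entirely outside $B$ except for $a$ itself, in which case the swap $F'=(F\setminus F_B\setminus\{aa'\})\cup H$ is ill-defined because $a$ acquires degree $4$; this is fixable by also destroying that cycle and re-matching it, but only after normalizing $F$ to contain no even cycles). The genuine gap is the one you flag yourself: nothing guarantees that some block containing an uncovered vertex admits a cut-vertex in $A$ adjacent to an uncovered vertex, and your fallback --- relocate the uncovered vertices to the $a_i'$ outside $B$ and ``recurse on the smaller block-cut subtrees'' --- is not a proof. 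The recursion changes the maximum partial factor at each step, you have not exhibited a decreasing measure, and you have not verified that the straddling-cycle issue above does not recur at interior blocks (where a cut-vertex can sit on Hamiltonian cycles of two adjacent blocks). As written, the argument does not terminate in any established sense, so the statement is not proved.

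The paper closes exactly this case by a different, non-recursive extremal argument: it fixes $F$ maximum, with all even cycles broken into $K_2$'s, and \emph{minimizing the number of chords} of the chosen Hamiltonian cycles among its edges. It then takes the uncovered vertex $v$ in a block $A_1$, the cut-vertex $c$ of $A_1$ that is paired outside $A_1$ and closest to $v$ along the Hamiltonian cycle $C(A_1)$, and ``shifts'' $K_2$-components along the cycle-path from $v$ to $c$. Either the shift reaches $c$ and exhibits $c$ as a cut-factor vertex, or it meets a vertex $z$ whose $F$-neighbor leaves the path; the choice of $c$ forces that neighbor to stay inside $A_1$ via a chord, and a local exchange then contradicts either maximality or chord-minimality. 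If you want to salvage your block-tree recursion you would need to (i) handle cut-vertices lying on straddling cycle components, and (ii) supply an explicit potential (e.g.\ total distance of uncovered vertices to a fixed root of the block-cut tree) that strictly decreases at each relocation step; alternatively, adopt the paper's chord-minimization to replace the recursion by a single extremal choice of $F$.
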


\begin{proof}
    Let $\mathcal{G}$ be as in the claim.  Assume in contrary that there exists a graph $G\in \mathcal{G}$ which does not have an isolated vertex, does not admit a perfect $[1,2]$-factor and does not admit a cut-factor. In the following, we assume that $G$ is connected. We may assume this since if components separately admit  a cut-factor, then the union of those components also admits a cut-factor.
    Let us denote the $2$-connected components of $G$ with at least three vertices by $A_1,\dots, A_h$. We have $h\geq1$ since otherwise $G$ is a tree. Furthermore, for each $i$, let $C(A_i)$ be a Hamiltonian cycle of $A_i$. We call edges between two vertices of $C(A_i)$ \textit{chords} when they do not belong to the cycle $C(A_i)$. 
    Observe that if $F$ is a partial maximum perfect $[1,2]$-factor of $G$, then it remains partial maximum perfect $[1,2]$-factor of $G$ when we transform every even cycle of $F$ into a set of $K_2$-components. Thus, we may assume that any partial maximum perfect $[1,2]$-factor of $G$, which we consider, does not contain even cycles.  
    
  Let $F$ be a partial maximum perfect $[1,2]$-factor (without even cycles) such that it minimizes the number of chords of $G$ among the edges in $E(F)$. By our assumptions, $F$ is not a perfect $[1,2]$-factor. Thus, there exists a vertex $v\in V(G)\setminus V(F)$.  Notice that if $v$ is not in a $2$-connected component containing at least three vertices,  then it is adjacent to some cut-vertex $u\in V(F)$. 
     Moreover, there exists a vertex $w\in N_F(u)$ such that $w$ and $v$ are in different components of $G-u$ and the claim follows. Hence, we assume without loss of generality that $v\in V(A_1)$.

    We denote by $Cut(F)\subseteq V(A_1)$ the set of cut-vertices of $G$ such that for any $u\in Cut(F)$ we have $u\in V(F)\cap V(A_1)$ and there exists a vertex $w\in N_F(u)\setminus V(A_1)$. In other words, $Cut(F)$ is the set of cut-vertices of $A_1$ which are also in $V(F)$ and adjacent in $F$ to a vertex outside of $A_1$.    
    
By our assumptions $A_1$ has a cut-vertex $c$, otherwise $G=A_1$ is a 2-connected graph that is Hamiltonian and thus admits a perfect $[1,2]$-factor.    Furthermore, we may choose $c$ such that $c\in Cut(F)$. Indeed, if such a vertex does not exist, then we could select $C(A_1)$ as a component of $F$ which is a contradiction on the maximality of $F$.
   
    We consider $c\in Cut(F)$ with a shortest distance to $v$ along the cycle $C(A_1)$ among the vertices in $Cut(F)$. Notice that we have a vertex $w\in N_F(c)\setminus V(A_1)$. Let $P_A$ be a shortest path along $C(A_1)$ from $v$ to $c$. Let us denote by $V(P_A)=\{v,v_1,\dots,v_h,c\}$ its vertices and by $E(P_A)=\{vv_1,v_1v_2,\dots,v_{h-1}v_h,v_hc\}$ its edges. Notice that if all vertex-pairs $\{v_{2i-1},v_{2i}\}$ form $K_2$-components in $F$, then we could instead consider $K_2$-components formed by vertex-pairs $\{v,v_1\}$ and $\{v_{2i},v_{2i+1}\}$. In this manner, if $h$ is even, then $v_h$ would be adjacent to $c$ and not in $F$ and $c$ would be a cut-factor vertex. While if $h$ is odd, then we would increase the size of $F$. Thus, we may assume that on the path $P_A$, there is a vertex $z\neq c$ such that $N_F(z)\not\subseteq V(P_A)$. Let $z$ be the first such vertex on the path $P_A$ from $v$ to $c$. Furthermore, let $N_F(z)=\{z_1,z_2\}$ (note that we can have $z_1=z_2$ if $N_F[z]$ induces a $K_2$).  Notice that since $c$ is the closest vertex in $Cut(F)$ to vertex $v$ along $C(A_1)$, we have $z\not\in Cut(F)$. Thus, $z_1,z_2\in V(A_1)$ and at least one of the edges $zz_1$ and $zz_2$ is a chord. In the following, we assume without loss of generality that $zz_1$ is a chord.
    Let us denote $z=v_e$. Notice that vertex pairs $v_{2i-1},v_{2i}$ for $2i<e$ form $K_2$-components in $F$. Consequently, as in the previous paragraph, we may ``\textit{shift}" $K_2$-components so that $v\in V(F)$ and $v_{e-1}\not\in F$. Thus, we may assume, without loss of generality, that $e=1$ and $v\in N_G(z)$. Assume first that $z$ belongs to a cycle $C_z$ in $F$. By our assumption on $F$, cycle $C_z$ has odd length. Hence, instead of $C_z$, we could have considered $K_2$-components in $F$ using vertices $\{v\}\cup V(C_z)$ which is against the maximality of $F$. Thus, $z$ belongs to a $K_2$-component in $F$ and $z_1=z_2$. However, now we could modify $F$ by removing $K_2$-component on vertices $z,z_1$ and by adding $K_2$-component containing $z$ and $v$. This is against $F$ containing a minimum number of chords of $G$. Thus, $z$ does not exist and the claim follows.
\end{proof}

Lemmas \ref{LemPVMP} and  \ref{LemHamiltonianCom} imply that if graph class $\mathcal{G}$ is closed under taking induced subgraphs and for any graph $G\in \mathcal{G}$ every $2$-connected component is Hamiltonian, then we have $o(G)=\D$ if and only if graph $G$ admits a perfect $[1,2]$-factor. In particular, we obtain following results for block and outerplanar graphs.

\begin{theorem}\label{TheBlock}
    Let $G$ be a block graph. We have $o(G)=\D$ if and only if $G$ admits a perfect $[1,2]$-factor. 
\end{theorem}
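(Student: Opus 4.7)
The plan is to derive the theorem directly from Lemmas~\ref{LemPVMP} and~\ref{LemHamiltonianCom}. To invoke Lemma~\ref{LemPVMP}, I need to verify that the class of block graphs is closed under induced subgraphs and that it satisfies the cut-factor property (the latter via Lemma~\ref{LemHamiltonianCom}).

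First, I would verify closure under induced subgraphs. Block graphs are precisely the graphs that are both chordal and diamond-free (i.e., contain no induced $C_k$ for $k \geq 4$ and no induced $K_4 - e$); since both forbidden families are preserved under vertex deletion, any induced subgraph of a block graph is again a block graph. Alternatively, one can argue more directly: removing a vertex from a block graph yields a graph in which every $2$-connected component remains a clique, because each block of the new graph is contained in a block of the original.

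Next, I would check the hypothesis of Lemma~\ref{LemHamiltonianCom}. By the very definition of block graph, every $2$-connected component is a clique, and any clique on at least three vertices is Hamiltonian (any cyclic ordering of its vertices provides a Hamiltonian cycle). Hence the class of block graphs falls under the scope of Lemma~\ref{LemHamiltonianCom}, and therefore has the cut-factor property.

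Applying Lemma~\ref{LemPVMP} to the class of block graphs then yields the equivalence: $o(G) = \D$ if and only if $G$ admits a perfect $[1,2]$-factor. There is no real obstacle here—the theorem is essentially a direct corollary of the two lemmas, and the only thing to check carefully is that block graphs satisfy the structural assumptions of both, which is immediate from the definition.
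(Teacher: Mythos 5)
Your proposal is correct and follows essentially the same route as the paper: verify that block graphs are closed under induced subgraphs and that their $2$-connected components are cliques (hence Hamiltonian when on at least three vertices), then combine Lemma~\ref{LemHamiltonianCom} with Lemma~\ref{LemPVMP}. The extra justification you give for closure under induced subgraphs is correct but not needed beyond a one-line observation.
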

\begin{proof} The class of block graphs is closed by induced subgraphs. Moreover, each $2$-connected component of a block graph is a clique. Consequently, every $2$-connected component of size at least 3 is Hamiltonian. By Lemma \ref{LemHamiltonianCom}, block graphs have the cut-factor property and thus, by Lemma \ref{LemPVMP}, we obtain the equivalence.
\end{proof}

We note that if a block graph $G$ admits a perfect $[1,2]$-factor $F$, then it admits a perfect $[1,2]$-factor which has only $K_2$ and $K_3$-components. Such a factor can be seen as a pairing dominating set, which leads to the following corollary.

\begin{corollary}\label{corBlock}
Let $G$ be a block graph. Then $o(G)=\D$ if and only if $G$ admits a pairing dominating set. 
\end{corollary}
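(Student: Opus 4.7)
The plan is to prove both directions of the equivalence, with the reverse direction being the interesting one. The forward implication, that a pairing dominating set already implies $o(G)=\D$, was established by the unnamed proposition right after Definition~\ref{def:PDS}, so no further argument is needed there. For the reverse direction, the strategy suggested by the remark preceding the corollary is to upgrade the perfect $[1,2]$-factor produced by Theorem~\ref{TheBlock} into a pairing dominating set by first reducing its components to $K_2$ and $K_3$.

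Concretely, I would first apply Theorem~\ref{TheBlock} to obtain a perfect $[1,2]$-factor $F$ of $G$. The key step is to replace each cycle component $C$ of $F$ with $|V(C)|\geq 4$ by a union of $K_2$'s and at most one $K_3$. The crucial observation is that $C$ is a $2$-connected subgraph of $G$, so its vertices all lie inside a single $2$-connected component of $G$; since $G$ is a block graph, this component is a clique, and therefore the vertices of $C$ are pairwise adjacent in $G$. Consequently I may split $V(C)$ into $|V(C)|/2$ disjoint edges of $G$ when $|V(C)|$ is even, or into $(|V(C)|-3)/2$ disjoint edges of $G$ together with one triangle when $|V(C)|$ is odd. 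Doing this for every long cycle component yields a perfect $[1,2]$-factor $F'$ whose components are all isomorphic to $K_2$ or $K_3$.

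Next I would build the pairing dominating set from $F'$ in the natural way: for every $K_2$ component $\{u,v\}$ of $F'$, include the pair $(u,v)$; for every $K_3$ component $\{u,v,w\}$, include one arbitrary pair, say $(u,v)$, leaving $w$ unpaired. All paired vertices are distinct because the components of $F'$ are vertex-disjoint. To check the domination condition, note that every vertex $x\in V(G)$ belongs to exactly one component of $F'$: if this component is a $K_2$ with pair $(u,v)$, then $x\in\{u,v\}\subseteq N[u]\cap N[v]$; if it is a $K_3$ with pair $(u,v)$, then the three vertices form a triangle in $G$, so $x\in\{u,v,w\}\subseteq N[u]\cap N[v]$. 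Hence $V(G)=\bigcup_i N[u_i]\cap N[v_i]$ and the pairs form a pairing dominating set.

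There is no substantial obstacle here: the only nontrivial point is the remark that a cycle in a block graph is trapped in a single clique, and everything else is a straightforward bookkeeping check. The proof therefore fits into a few lines once Theorem~\ref{TheBlock} is available.
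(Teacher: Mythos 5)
Your proof is correct and follows essentially the same route as the paper: the remark preceding the corollary in the paper indicates exactly this argument, namely that a cycle component of the perfect $[1,2]$-factor lies inside a single $2$-connected component of the block graph (hence a clique) and can be split into $K_2$'s and at most one $K_3$, which then yields a pairing dominating set. Your write-up just makes explicit the details the paper leaves to the reader.
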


We next consider outerplanar graphs.

\begin{theorem}\label{TheOuterPlanar}
    Let $G$ be an outerplanar graph. We have $o(G)=\D$ if and only if $G$ admits a perfect $[1,2]$-factor. 
\end{theorem}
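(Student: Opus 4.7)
The plan is to mirror exactly the proof strategy used for Theorem \ref{TheBlock}, by checking that the class of outerplanar graphs satisfies the two hypotheses of Lemma \ref{LemPVMP}: it is closed under induced subgraphs, and every $2$-connected component on at least three vertices is Hamiltonian (so that Lemma \ref{LemHamiltonianCom} yields the cut-factor property).

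First I would verify closure under induced subgraphs. Given an outerplanar embedding of $G$ with all vertices on the outer face, removing a vertex $u$ and all its incident edges leaves a planar drawing in which every remaining vertex is still on the outer face of the resulting drawing, so the induced subgraph $G-u$ remains outerplanar. Iterating this, every induced subgraph of an outerplanar graph is outerplanar.

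The second and main step is to show that every $2$-connected outerplanar graph $H$ on at least three vertices is Hamiltonian. I would invoke the classical fact that in any $2$-connected plane graph, the boundary of each face is a simple cycle; applied to the outer face of a $2$-connected outerplanar embedding, this gives a simple cycle $C$ on the outer face. Since every vertex of $H$ lies on the outer face (by outerplanarity) and the boundary of this face is a simple cycle, $C$ visits every vertex exactly once and is therefore a Hamiltonian cycle of $H$. This is the step where one must be a bit careful: one must rule out that the outer face boundary is a closed walk that revisits a vertex, which is exactly what $2$-connectedness prevents (a repeated vertex on a face boundary would be a cut vertex).

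Finally, having established the two hypotheses, Lemma \ref{LemHamiltonianCom} implies that outerplanar graphs satisfy the cut-factor property, and then Lemma \ref{LemPVMP} gives the equivalence $o(G) = \D$ if and only if $G$ admits a perfect $[1,2]$-factor. The hard part is purely the classical graph-theoretic fact about the outer face of a $2$-connected outerplanar graph; everything else is a direct citation of the preceding lemmas, in complete parallel with the block graph case.
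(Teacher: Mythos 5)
Your proposal is correct and follows exactly the paper's argument: the paper likewise deduces the result from Lemma~\ref{LemHamiltonianCom} and Lemma~\ref{LemPVMP}, citing the classical fact that every $2$-connected component of an outerplanar graph on at least three vertices is Hamiltonian (which you prove rather than cite) and closure under induced subgraphs.
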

\begin{proof}
   Every 2-connected component on at least three vertices of an outerplanar graph has a Hamiltonian cycle \cite{chartrand1967planar}. Thus by Lemma \ref{LemHamiltonianCom}, outerplanar graphs have the cut-factor property.  Since they are closed by induced subgraphs, using Lemma \ref{LemPVMP} we obtain the equivalence.
\end{proof}

In the particular case of bipartite  graphs, if there is a perfect $[1,2]$-factor, there is a perfect matching. Therefore we have the following result.

\begin{corollary}
Let $G$ be a bipartite outerplanar graph. Then $o(G)=\D$ if and only if $G$ has a perfect matching.
\end{corollary}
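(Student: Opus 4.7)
The plan is to derive this corollary as a direct consequence of Theorem~\ref{TheOuterPlanar}, together with the elementary observation that a bipartite graph cannot contain an odd cycle.

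For the forward direction, assume $o(G) = \D$. By Theorem~\ref{TheOuterPlanar}, $G$ admits a perfect $[1,2]$-factor $F$, whose connected components are each isomorphic to $K_2$ or to some cycle $C_n$ with $n \geq 3$. Since $G$ is bipartite, every cycle in $G$ (and hence every cycle component of $F$) has even length. Each even cycle $C_{2k}$ trivially admits a perfect matching, obtained by taking every other edge along the cycle. Taking the union of these perfect matchings (one per cycle component of $F$) together with all the $K_2$ components of $F$ yields a perfect matching of $G$.

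For the reverse direction, if $G$ has a perfect matching $M$, then $M$ itself is a perfect $[1,2]$-factor of $G$ (all of whose components are $K_2$, which has outcome $\D$). Hence by Lemma~\ref{LemPer12fact} (or equivalently the trivial direction of Theorem~\ref{TheOuterPlanar}), we have $o(G) = \D$.

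There is no real obstacle here: the difficulty is entirely encapsulated in the proofs of Theorem~\ref{TheOuterPlanar} and Lemma~\ref{LemPer12fact}. The only bipartite-specific ingredient is the absence of odd cycles, which collapses the distinction between a perfect $[1,2]$-factor and a perfect matching in this setting.
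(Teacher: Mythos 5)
Your argument is correct and matches the paper's reasoning exactly: the paper justifies this corollary by noting that in a bipartite graph a perfect $[1,2]$-factor yields a perfect matching (since all cycle components are even and hence decompose into matchings), and then invokes Theorem~\ref{TheOuterPlanar}. Nothing further is needed.
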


The results of this section lead to polynomial algorithms for deciding who wins in  outerplanar and block graphs.

\begin{corollary}\label{CorOutBlockDecision}
   Given an outerplanar or a block graph $G$, \PBoutcomeD is polynomial. 
\end{corollary}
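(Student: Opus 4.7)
The plan is to directly assemble the results already established in this section; the statement is essentially a packaging corollary rather than a new theorem. Given an input graph $G$ that is either outerplanar or a block graph, Theorem \ref{TheOuterPlanar} (respectively Theorem \ref{TheBlock}) tells us that $o(G)=\D$ if and only if $G$ admits a perfect $[1,2]$-factor. Hence deciding \PBoutcomeD on these classes reduces to testing whether $G$ has a perfect $[1,2]$-factor.

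For the latter test, I would invoke Corollary \ref{Cor12FactorPol}: build the incidence bipartite graph $B(G)$ from Tutte's construction and apply a polynomial bipartite matching algorithm such as Ford--Fulkerson (as cited earlier) or Hopcroft--Karp. The existence of a perfect matching in $B(G)$ is equivalent to the existence of a perfect $[1,2]$-factor in $G$ by Theorem \ref{TheTutte}, and both the construction of $B(G)$ (of size $O(|V|+|E|)$) and the matching test are polynomial. Composing these two reductions gives a polynomial-time decision procedure for \PBoutcomeD.

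A minor sanity check to include is that recognition of the input class is itself polynomial when the class is not promised as part of the input: outerplanarity can be tested in linear time, and block graphs are recognized by verifying that every $2$-connected component is a clique, which is again polynomial. There is no genuine obstacle here: the two key ingredients (the characterization via perfect $[1,2]$-factors, and the polynomial-time matching-based test for such factors) have already been proved, so the corollary follows by simple composition. A brief remark that the corresponding winning strategy can also be produced in polynomial time (by first computing the factor, then following the per-component strategies on cycles and edges) rounds out the argument without requiring additional work.
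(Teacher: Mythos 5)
Your argument is correct and is exactly the paper's proof: combine the characterization of outcome $\D$ via perfect $[1,2]$-factors (Theorems \ref{TheBlock} and \ref{TheOuterPlanar}) with the polynomial-time test for such factors (Corollary \ref{Cor12FactorPol}). The additional remarks on class recognition and strategy extraction are fine but not needed for the statement as given.
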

\begin{proof}
    Claim follows from Theorems \ref{TheBlock} and \ref{TheOuterPlanar} together with Corollary \ref{Cor12FactorPol}.   
\end{proof}

Note that the strategy for both players when Staller starts can be described in polynomial time. When Dominator wins, she just answer in each component of the perfect $[1,2]$-factor following her strategy in it. When Staller wins, he follows the strategy exhibited in Lemma \ref{LemPVMP}: he plays on a cut-factor vertex and follows the strategy on the component where Dominator did not play that does not have a perfect $[1,2]$-factor.

\section{Pairing strategies and application to interval graphs}\label{sec:interval}

In this section we focus $\mathcal F$-factors with $\mathcal F=\{K^+_{2,\ell}, \ell\geq 0\}$ that are equivalent to pairing dominating sets with adjacent pairs.
We first give some properties of pairing dominating sets when the pairs are adjacent. We then focus on interval graphs for which if there is a pairing dominating set, then there is an adjacent pairing dominating set.  Then we prove that an interval graph has outcome $\D$ if and only if it has a pairing dominating set. It implies in particular that the problem of deciding if an interval graph has outcome $\D$ is in $\mathsf{NP}$ (instead of $\mathsf{PSPACE}$). Finally, we discuss the complexity of finding a pairing dominating set in an interval graph.

\subsection{Adjacent pairing dominating sets}\label{subsec:adjacentPDS}

In Definition \ref{def:PDS} of pairing dominating sets, the pairs are not necessarily adjacent. When they are adjacent, we prove that a pairing dominating set is equivalent to a $\{K^+_{2,\ell}, \ell\geq 0\}$-factor.

\begin{proposition}\label{prop:PDSasfactors}
   Let $G=(V,E)$ be a graph. Then $G$ admits a pairing dominating set whose pairs are adjacent vertices if and only if $G$ has a  $\{K^+_{2,\ell}, \ell\geq 0\}$-factor.
\end{proposition}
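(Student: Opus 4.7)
The plan is to prove the two directions separately by exhibiting explicit constructions going between pairing dominating sets with adjacent pairs and $\{K^+_{2,\ell}, \ell\geq 0\}$-factors. In both directions, the two vertices of a pair will play the role of the two universal vertices in the associated $K^+_{2,\ell}$-component.

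For the forward direction, suppose $G$ admits a PDS $\{(u_1,v_1),\dots,(u_k,v_k)\}$ whose pairs are adjacent. Every vertex of $G$ lies in some $N[u_i]\cap N[v_i]$; in particular, every vertex $w\notin\bigcup_i\{u_i,v_i\}$ is a common neighbor of at least one pair, and for such $w$ I fix arbitrarily one index $\varphi(w)=i$ with $w\in N(u_i)\cap N(v_i)$. For each $i$, let $L_i=\varphi^{-1}(i)$ and define the subgraph $H_i$ on the vertex set $\{u_i,v_i\}\cup L_i$ with edges $u_iv_i$ together with $u_iw$ and $v_iw$ for every $w\in L_i$. Each $H_i$ is isomorphic to $K^+_{2,|L_i|}$, its edges are all edges of $G$ (adjacency of the pair and the choice of $\varphi$), and the sets $V(H_i)$ partition $V(G)$ since the $u_i,v_i$ are pairwise distinct and $\varphi$ is a function. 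Then $H=\bigcup_iH_i$ is the desired spanning subgraph and its connected components are exactly the $H_i$.

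For the backward direction, suppose $H$ is a $\{K^+_{2,\ell}, \ell\geq 0\}$-factor of $G$. For each component $C$ of $H$, let $(u_C,v_C)$ be the two vertices of the ``$2$-side'' of the associated $K^+_{2,\ell_C}$ (for $\ell_C=0$ the two vertices of $K_2$; for $\ell_C=1$ any two of the three vertices of $K_3$; for $\ell_C\geq 2$ they are uniquely determined). Since the components of $H$ are vertex-disjoint, the collection $\{(u_C,v_C)\}_C$ consists of distinct vertices, and the pairs are adjacent because $u_Cv_C\in E(H)\subseteq E(G)$. Moreover, every vertex $w$ belongs to some component $C$, and by definition of $K^+_{2,\ell_C}$ the vertices $u_C$ and $v_C$ are universal in $C$; hence $w\in N[u_C]\cap N[v_C]$ in $H$, and therefore also in $G$. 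Thus the pairs form a PDS with adjacent pairs.

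The only delicate point is the forward direction, where one must be careful that the constructed $H$ has the $K^+_{2,\ell_i}$'s as its actual connected components (and not larger ones obtained by spurious extra edges of $G$). This is handled by defining $H$ as the edge set explicitly listed above rather than as an induced subgraph; the vertex sets $V(H_i)$ form a partition, so $H$ decomposes into the required components by construction.
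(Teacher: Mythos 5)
Your proof is correct and takes essentially the same approach as the paper: you partition the vertices according to which adjacent pair covers them (your map $\varphi$ plays the role of the paper's sets $X_i$) and build each $K^+_{2,\ell}$-component from the explicitly listed edges $u_iv_i$, $u_iw$, $v_iw$, while the converse reads off the two universal vertices of each component as the pairs. The point you flag as delicate --- defining $H$ by its edge set so that the components are exactly the $H_i$ --- is handled identically in the paper via the edge sets $E_i$.
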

Note that a $\{K^+_{2,\ell}, \ell\geq 0\}$-factor is also a $\{K_2,K_3,K^+_{2,\ell}, \ell\geq 3\}$-factor.

\begin{proof}
    Assume that $G$ has a PDS $S=\{(u_1,v_1),\dots ,(u_k,v_k)\}$ whose pairs are adjacent vertices.
    By definition, for each vertex $x$ of $G$, there exists a pair $(u_i,v_i)$ of $S$ such that $x$ is adjacent to both $u_i$ and $v_i$. Thus we can partition the vertices of $G$ in $k$ sets $X_1$,\dots ,$X_k$ such that every vertex in $X_i$ is adjacent to both $u_i$ and $v_i$. Since $u_i$ and $v_i$ are adjacent, we can assume that $X_i$ contain $u_i$ and $v_i$. Let $E_i=\{u_ix,v_ix | x\in X_i\} $. Note that the graph $(V_i,E_i)$ is either an edge (if $X_i=\{u_i,v_i\}$) or isomorphic to $K^+_{2,\ell}$.
    Let $H$ be the graph on vertex set $V$ with edges the union of sets $E_i$. Then $H$ is a $\{K^+_{2,\ell}, \ell\geq 0\}$-factor of $G$. 

    Let now $H$ be a $\{K^+_{2,\ell}, \ell\geq 0\}$-factor of $G$. Note that each connected component of $H$ admits two universal vertices. Let $S$ be the set of pairs constituted by the two universal vertices of each connect component. Then $S$ is a pairing dominating set whose pairs are adjacent.
\end{proof}

In the following, we will only consider {\em adjacent pairing dominating sets} (APDS in short) that are PDS where the pairs are adjacent. Deciding if a graph has an adjacent pairing dominating sets is NP-complete. Indeed the reduction in \cite{MBdomgame} for PDS can be easily adapted for APDS. 
However, we will prove that the property of having an APDS is definable in $\textsc{MSO}_2$ which implies in particular that it can be checked in linear time for graphs of bounded treewidth \cite{courcellemso}.
Informally, a graph property $\mathcal{P}$ is definable in $\textsc{MSO}_2$ if there exists 
formula $\varphi$ using quantifiers over vertices or edges, sets of edges or vertices and incidence relation $\texttt{inc}(u, e)$ such that, a graph $G$ satisfies $\mathcal{P}$ if and only if $\varphi$ is true for $G$ (see \cite{courcellemso} for a formal definition).

\begin{theorem}\cite{courcellemso}
Let $\mathcal{P}$ be a graph property definable in $\textsc{MSO}_2$. Then there exists a function $f$ such that deciding whether a graph $G$ satisfies $\mathcal{P}$ can be computed in time $f(t)n$ where $t$ is the treewidth of $G$ and $n$ is the number of vertices of $G$.
\end{theorem}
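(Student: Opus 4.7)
The plan is to prove the theorem via Courcelle's automata-theoretic approach: translate the MSO$_2$ formula $\varphi$ into a finite tree automaton that is then run bottom-up on a tree decomposition of $G$. First, I would compute a tree decomposition of $G$ of width at most $t$ in time linear in $n$ (with a possibly large function of $t$ hidden), for example via Bodlaender's linear-time algorithm, and then refine it into a \emph{nice} tree decomposition whose nodes have one of five simple types: leaf, introduce-vertex, introduce-edge, forget-vertex, and join. This refinement step is again linear in $n$.

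Second, I would associate to the formula $\varphi$ a bottom-up deterministic tree automaton $\mathcal{A}_\varphi$ running on the nice tree decomposition. The state at a node whose bag is $B$ records the \emph{MSO$_2$-type} of the subgraph processed so far relative to the at most $t+1$ vertices of $B$. Fixing the quantifier rank $q$ of $\varphi$, the $q$-type of a labelled graph with a distinguished set of interface vertices is its equivalence class under the relation ``satisfies the same MSO$_2$ formulas of quantifier rank at most $q$''. A standard Ehrenfeucht--Fraïssé argument shows that there are only finitely many such types, namely $f(t, q)$ many for some function $f$.

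Third, I would verify that the transition function of $\mathcal{A}_\varphi$ is computable. At an introduce or forget node, the new type is determined by the child's type and the operation performed. At a join node, one invokes a Feferman--Vaught-style composition theorem for MSO$_2$: the type of the disjoint sum of two graphs sharing a common interface is determined by the types of the parts. Because the bag $B$ separates the subgraph already processed from the rest of $G$, this composition is exactly what makes the bottom-up pass correct. Evaluating $\mathcal{A}_\varphi$ then takes time $O(f(t, q) \cdot n)$, and $G$ satisfies $\varphi$ if and only if the root state is accepting.

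The main obstacle is keeping the state space bounded by a function of $t$ and $\varphi$ alone, independent of $n$. This forces us to track MSO$_2$-types with respect to the bag vertices and \emph{all} subsets of them, and also all subsets of edges incident to the bag (since MSO$_2$ quantifies over edges); the number of such types grows as an iterated exponential in $q$ and $t$, so the function $f$ is non-elementary in general. This blow-up is the unavoidable price of the full expressiveness of MSO$_2$, but crucially $f$ is independent of $n$, which delivers the advertised $f(t) \cdot n$ bound.
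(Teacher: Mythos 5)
This statement is Courcelle's theorem, which the paper imports from \cite{courcellemso} without proof, so there is no in-paper argument to compare yours against; I can only assess your sketch on its own terms. What you describe is the standard proof: Bodlaender's linear-time algorithm to obtain a (nice) tree decomposition of width $t$, a bottom-up automaton whose states are rank-$q$ MSO$_2$-types of the already-processed part relative to the bag, finiteness of the type set via Ehrenfeucht--Fra\"iss\'e, and a Feferman--Vaught-style composition lemma to justify the transitions at join (and introduce/forget) nodes. This is correct in outline and is exactly how the result is proved in the literature. The points you leave at the level of assertion are the genuinely technical ones: that for MSO$_2$ one must work over the two-sorted incidence structure (or equivalently track edge-set information relative to the bag, as you note) and still get only finitely many types; that the composition of types is not merely well-defined but \emph{effectively computable} from $\varphi$, so the automaton can actually be constructed; and that the bag is a separator, which is what licenses the composition argument. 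You name all three, so as a proof sketch this is sound, but a full proof would need to discharge them; since the paper itself treats the theorem as a black box, that level of detail is not expected here.
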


\begin{proposition}\label{prop:MSO}
Let $G$ be a graph. The property ``$G$ has an adjacent pairing dominating set" is definable in $\textsc{MSO}_2$.
\end{proposition}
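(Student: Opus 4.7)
The plan is to exhibit an explicit $\textsc{MSO}_2$ formula $\varphi$ such that $G \models \varphi$ if and only if $G$ has an adjacent pairing dominating set. By Proposition~\ref{prop:PDSasfactors}, an APDS corresponds bijectively to a set $M \subseteq E(G)$ of pairwise vertex-disjoint edges (that is, a matching) such that, for every vertex $x$ of $G$, there exists $e = uv \in M$ with $u, v \in N[x]$. I would encode these two conditions directly into an $\textsc{MSO}_2$ sentence.

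First I introduce a few auxiliary shorthands. Adjacency and closed-neighborhood membership are definable by $\mathrm{adj}(x,y) \equiv \exists e\,(\texttt{inc}(x,e) \wedge \texttt{inc}(y,e))$ and $x \in N[y] \equiv x = y \vee \mathrm{adj}(x,y)$. The matching condition on an edge-set variable $M$ is
$$\mathrm{Match}(M) \equiv \forall e_1 \forall e_2 \forall v \bigl((e_1 \in M \wedge e_2 \in M \wedge e_1 \neq e_2 \wedge \texttt{inc}(v,e_1)) \Rightarrow \neg \texttt{inc}(v, e_2)\bigr),$$
and the domination condition is
$$\mathrm{Dom}(M) \equiv \forall x \exists e \bigl(e \in M \wedge \forall y\,(\texttt{inc}(y,e) \Rightarrow x \in N[y])\bigr).$$
The target formula is then $\varphi \equiv \exists M\,\bigl(\mathrm{Match}(M) \wedge \mathrm{Dom}(M)\bigr)$. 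All quantifications range over vertices, edges, or edge-sets, and all atomic predicates are equality or incidence, so $\varphi$ lies in $\textsc{MSO}_2$.

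Correctness of the reduction is immediate in both directions: an edge set $M$ satisfying $\mathrm{Match}(M) \wedge \mathrm{Dom}(M)$ yields an APDS by pairing the two endpoints of each edge of $M$, and conversely any APDS gives, via Proposition~\ref{prop:PDSasfactors}, an edge set satisfying both predicates. The construction is essentially syntactic, so there is no real obstacle; the only technical point worth noting is that one genuinely quantifies over an edge set $M$, which is precisely the feature distinguishing $\textsc{MSO}_2$ from $\textsc{MSO}_1$ and explains why the stronger logic is the natural setting for this property.
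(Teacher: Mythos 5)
Your proof is correct and follows essentially the same route as the paper's: both exhibit an explicit $\textsc{MSO}_2$ sentence that existentially quantifies over an edge set, asserts via the incidence relation that it is a matching, and asserts that every vertex lies in the closed neighborhood of both endpoints of some edge of the set. (The appeal to Proposition~\ref{prop:PDSasfactors} is not really needed — the correspondence between adjacent pairing dominating sets and such matchings is immediate from Definition~\ref{def:PDS} — but this does not affect correctness.)
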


\begin{proof}
Given a graph $G = (V,E)$, the following formula expresses the property for $G$ to have a pairing dominating set with adjacent pairs:
\begin{equation}\label{form:pair_dom}
    \exists S \subseteq E, (\forall v \in V, \exists e \in S, \texttt{dom}(e,v))\wedge \texttt{non\_adj}(S)
\end{equation}
where $\texttt{dom}(e,v)$ is defined by
\begin{align*}
    \texttt{dom}(e,v) :=& \texttt{inc}(v,e) \vee \exists u_1,u_2 \in V \Bigl(u_1 \neq u_2  \wedge \texttt{inc}(u_1,e)\\ 
    \wedge& \texttt{inc}(u_2,e) \wedge \texttt{edge}(u_1,v) \wedge \texttt{edge}(u_2,v)\Bigr)
\end{align*}
$\texttt{edge}(u, v)$ is defined by
$$\texttt{edge}(u,v) := u \neq v \wedge \exists e (\texttt{inc}(u,e) \wedge \texttt{inc}(v,e))$$ 
and where $\texttt{non\_adj}(S)$ is defined by
$$\texttt{non\_adj}(S) := \forall e_1,e_2 \in P, \exists u \in V, \texttt{inc}(u,e_1) \wedge \texttt{inc}(u,e_2) \implies e_1 = e_2.$$

The formula for $\texttt{dom}$ states that a vertex $v$ is dominated by the vertices incident to the edge $e$. Indeed, for a vertex to be dominated by these vertices, it needs to be in the closed neighborhood of both of them. Therefore, it is either incident to the edge or adjacent to both the incident vertices of the edge.

The formula for $\texttt{non\_adj}$ states that no vertex is incident to two different edges of $S$.

Therefore, Formula (\ref{form:pair_dom}) states that there exists a set of edges $S$ such that all the vertices of the graph are dominated by at least one of the edges of $S$ and no two edges of $S$ are incident to each other, which the definition of a pairing dominating set with adjacent pairs.
\end{proof}

Notice that a complete bipartite graph $K_{n,m}$ admits an adjacent pairing dominating set if and only if $n=m$.
Thus, we can reuse the proof of Proposition 5.13 in~\cite{courcellebook}  concerning hamiltonicity to show that the property
``$G$ has an adjacent pairing dominating set" is not definable in $\textsc{MSO}_1$ and consequently not in first order logic.

Note that when the pairs of a pairing dominating set are not adjacent, it is not clear that it can be written as a factor. Moreover, we do not know if having a general (i.e not necessarily adjacent) PDS can be written with an $\textsc{MSO}_2$-formula.

\subsection{Pairing dominating sets in interval graphs}\label{subsec:intervalPDS}

In all the rest of this section, interval graphs will be represented by a family of closed intervals of $\mathbb R$. For an interval $u$, we denote by $\min(u)$ (respectively $\max(u)$) the left endpoint or starting date (resp. right endpoint or ending date) of $u$. An interval representation of an interval graph $G$ is {\em proper} if no interval is included in another interval. A {\em unit} interval graph is an interval graph that has a representation where all the intervals have length 1. It is well known that an interval graph is a unit interval graph if and only if it has a proper representation.

We first prove that having a pairing dominating set or an adjacent pairing dominating set is equivalent in interval graphs.

\begin{proposition}
    \label{prop:adjacent}
    Let $G$ be an interval graph that has a pairing dominating set. Then $G$ has a pairing dominating set whose pairs are adjacent.
\end{proposition}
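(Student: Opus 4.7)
The plan is to take an arbitrary PDS $P$ of $G$ and iteratively transform it into an APDS by reducing the number of non-adjacent pairs. Among all PDSs of $G$, choose $P$ minimizing this number, and suppose for contradiction it is positive. Pick a non-adjacent pair $(u,v) \in P$ oriented so that $\max(u) < \min(v)$, and set $X := N[u] \cap N[v]$. The crucial interval-graph observation is that $X$ is a clique: any $x \in X$ is adjacent to both $u$ and $v$, whence $\min(x) \leq \max(u) < \min(v) \leq \max(x)$, so $x$'s interval contains the whole gap $[\max(u),\min(v)]$; any two such intervals then share this segment and are thus adjacent.

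Several easy sub-cases dispose of $(u,v)$ directly. If $X = \emptyset$, the pair $(u,v)$ dominates no vertex and, since $u,v\notin X$ (they are non-adjacent), they are already dominated by other pairs of $P$; simply delete $(u,v)$. If some $x\in X$ is not used by any other pair, replace $(u,v)$ by the adjacent pair $(u,x)$: since $X \subseteq N[u]$ and $X$ is a clique containing $x$, we have $X \subseteq N[u]\cap N[x]$, so the coverage of $(u,v)$ is preserved. If some $x \in X$ has its partner $y$ in $P$ also inside $X$, then $(x,y)$ is adjacent and already dominates the entire clique $X$, making $(u,v)$ redundant. In each sub-case, the resulting PDS has strictly fewer non-adjacent pairs than $P$, contradicting minimality.

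The remaining scenario is that every $x \in X$ is paired with some $y_x \notin X$. If each such pair $(x,y_x)$ happens to be adjacent, then $x\in N[x]\cap N[y_x]$ shows $x$ is dominated by its own pair, and $(u,v)$ can again be removed. The genuinely delicate sub-case is that some $x^\star \in X$ is paired non-adjacently with $y^\star \notin X$. We then perform a double swap: remove the pairs $(u,v)$ and $(x^\star,y^\star)$ from $P$ and insert the adjacent pair $(u,x^\star)$, which dominates $N[u]\cap N[x^\star] \supseteq X$. The newly unpaired vertices $v$ and $y^\star$ were in any case already dominated by other pairs of $P$, since neither lies in its own non-adjacent pair's closed-neighborhood intersection.

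The main technical obstacle is verifying that every vertex $z \in N[x^\star]\cap N[y^\star]\setminus N[u]$ remains dominated after the double swap. The set $N[x^\star]\cap N[y^\star]$ is itself a clique (intervals spanning the gap between $x^\star$ and $y^\star$). By initially choosing $(u,v)$ with minimum gap length $\min(v)-\max(u)$ among non-adjacent pairs of $P$, or equivalently by applying the procedure first to $(x^\star,y^\star)$ whenever its gap is strictly smaller, an interval-geometric case analysis (distinguishing whether $y^\star$ lies to the left or to the right of $x^\star$) shows that such a $z$ must already lie in the closed-neighborhood intersection of a third pair of $P$, or it can be absorbed by adjoining a supplementary adjacent pair taken inside the clique $N[x^\star]\cap N[y^\star]$. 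Either way we obtain a PDS with strictly fewer non-adjacent pairs, contradicting the minimality of $P$ and completing the proof.
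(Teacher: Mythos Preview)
Your argument is sound through the easy sub-cases, but the final paragraph—the only genuinely non-trivial case—is a gap rather than a proof. After deleting both $(u,v)$ and $(x^\star,y^\star)$ and inserting only $(u,x^\star)$, you must recover coverage of every $z\in N[x^\star]\cap N[y^\star]\setminus N[u]$ and also ensure $v$ and $y^\star$ remain covered. For the latter you only know that each was dominated by some pair other than its own, but that other pair could be precisely the second pair you deleted (nothing rules out $v\in N[x^\star]\cap N[y^\star]$, for instance). For the former you merely assert that ``an interval-geometric case analysis'' works, and your fallback of ``adjoining a supplementary adjacent pair inside the clique $N[x^\star]\cap N[y^\star]$'' is not generally available: after the swap the only freed vertices are $v$ and $y^\star$, and since $y^\star\notin N[x^\star]$ by non-adjacency, $y^\star$ does not even belong to that clique; there is no reason two unused vertices exist there. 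The minimum-gap choice does not supply the missing control.

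The paper avoids this dead end with a different extremal choice and a different swap. It selects the non-adjacent pair so that $\max(u)$ is minimum (not minimum gap), and takes as witness a vertex $w$ that is pair-covered \emph{only} by $(u,v)$; minimality of $S$ guarantees such a $w\in X$ exists, and it forces $\min(u)<\min(w)\le\max(u)<\min(v)\le\max(w)<\max(v)$. If $w$ lies in a pair $(w,t)$, the minimality of $\max(u)$ then forces $\max(w)<\min(t)$. The key move is that the second pair is not discarded but \emph{rewired}: $\{(u,v),(w,t)\}$ is replaced by $\{(u,w),(v,t)\}$. Every interval pair-covered by $(u,v)$ contains $\max(u)\in w$, hence is pair-covered by $(u,w)$; every interval pair-covered by $(w,t)$ contains $\max(w)\in v$, hence is pair-covered by $(v,t)$. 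Coverage is preserved exactly, the number of non-adjacent pairs strictly drops, and no residual case analysis is needed.
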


\begin{proof}
Let $G$ be an interval graph that has a pairing dominating set $S$. We choose $S$ that contains a minimal number of pairs that are not adjacent.
If there are only adjacent pairs in $S$, then we are done. 
Thus, assume that there exists a non-adjacent pair $(u,v)$ in $S$ and among all non-adjacent vertex pairs, choose vertex $u$ that minimises $\max(u)$. Since $u$ and $v$ are not adjacent, we have $\max(u)<\min(v)$. By minimality, the pair $(u,v)$ cannot be removed from $S$. This means that there exists an interval $w$ that is pair-covered only by $(u,v)$. We necessarily have the following inequalities: $\min (u) <\min(w)\leq \max(u)< \min(v) \leq\max (w)<\max(v)$. 
Indeed, the first and last inequalities follow from the fact that $u$ and $v$ must be pair-covered by pairs of $S$ distinct from $(u,v)$. If $w$ was containing $u$ or $v$, then it would be pair-covered by one of these pairs, which contradicts the fact that $w$ is only pair-covered by $(u,v)$. The other inequalities are just the translation that $w$ is intersecting $u$ and $v$.

Assume first that $w$ is not in a pair of $S$. Let $S'=(S\setminus \{(u,v)\}) \cup \{(u,w)\}$. We claim that $S'$ is still a PDS. Indeed, any vertex pair-covered by $(u,v)$ must contain $\max(u)$ and thus is intersecting $w$ and is pair-covered by $(u,w)$.

Assume next that $w$ is in a pair $(w,t)$ of $S$. Since $w$ is only pair-covered by $(u,v)$, vertices $w$ and $t$ are not adjacent. By minimality of $\max(u)$ among the non-adjacent pairs of $S$, we must have $t$ after $w$. In other words, $\max(w)<\min(t)$. Let $S'=(S\setminus \{(u,v),(t,w)\} )\cup \{(u,w),(v,t)\}$. We claim that $S'$ is a PDS. Indeed, as before, any vertex pair-covered by $(u,v)$ is pair-covered by $(u,w)$. Consider now a vertex $x$ pair-covered by $(w,t)$, it must contain $\max(w)$. Since $\min(v)\leq\max(w)<\max(v)$, interval $x$ is also intersecting $v$ and thus is pair-covered by $(v,t)$.

In both cases, we  exhibit a PDS $S'$ with less non-adjacent pairs, contradicting the minimality of $S$.
\end{proof}

Thus, by Proposition \ref{prop:PDSasfactors}, considering strategies using $\mathcal F$-factors or pairing dominating sets is equivalent in interval graphs. In the following we will rather use the pairing dominating set approach. We will always consider adjacent pairing dominating sets.


We now solve the case of unit interval graphs. In particular, having a PDS or having a $[1,2]$-factors are equivalent in unit interval graphs  to have outcome $\D$. 

\begin{theorem}
    \label{thm:unit_int}
    Let $G$ be a connected unit interval graph on $n\geq 2$ vertices. Let $v_1$,\dots ,$v_n$ be the vertices of $G$ indexed by increasing starting dates. 
    Then the following propositions are equivalent:
    \begin{enumerate}
        \item $G$ has outcome $\D$;
        \item $G$ has a $\{K_2,K_3\}$-factor;
        \item $G$ has a pairing dominating set;
        \item $n$ is even or some vertex $v_{2i}$ is not a cut-vertex.
    \end{enumerate}
\end{theorem}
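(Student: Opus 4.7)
The plan is to establish the cycle $(2) \Rightarrow (3) \Rightarrow (1) \Rightarrow (4) \Rightarrow (2)$. The first two implications follow from previous material: given a $\{K_2,K_3\}$-factor, choose the two endpoints of each $K_2$-component and any two vertices of each $K_3$-component as adjacent pairs (the third vertex of every $K_3$ is adjacent to both chosen vertices), producing a pairing dominating set; then invoke the proposition that a PDS forces outcome $\D$.

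For $(4) \Rightarrow (2)$, I would first record two structural facts that hold in a connected unit interval graph with $v_1,\dots,v_n$ sorted by left endpoint: (a) consecutive vertices $v_i,v_{i+1}$ are always adjacent, since otherwise the unit-length intervals yield a gap that disconnects $G$; and (b) $v_{2i}$ is a cut-vertex if and only if $v_{2i-1}$ is not adjacent to $v_{2i+1}$, since any edge $v_av_b$ with $a\leq 2i-1<2i+1\leq b$ forces $\max(v_a)\leq \max(v_{2i-1})$ and $\min(v_b)\geq\min(v_{2i+1})$ by unit length, hence $v_{2i-1}v_{2i+1}\in E$. From (a), if $n$ is even then $(v_1,v_2),(v_3,v_4),\dots,(v_{n-1},v_n)$ is a $\{K_2\}$-factor; if $n$ is odd and $v_{2i}$ is not a cut-vertex, then by (b) the triple $\{v_{2i-1},v_{2i},v_{2i+1}\}$ induces a triangle, and combining it with $(v_1,v_2),\dots,(v_{2i-3},v_{2i-2})$ on the left and $(v_{2i+2},v_{2i+3}),\dots,(v_{n-1},v_n)$ on the right yields a $\{K_2,K_3\}$-factor.

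For $(1) \Rightarrow (4)$, I would prove the contrapositive: if $n$ is odd and every $v_{2i}$ is a cut-vertex, then Staller wins as first player. Facts (a) and (b) then imply $N_G(v_1)=\{v_2\}$, $N_G(v_n)=\{v_{n-1}\}$, and $N_G(v_{2i-1})=\{v_{2i-2},v_{2i}\}$ for every interior odd index (since $v_{2i-2}$ being a cut-vertex blocks $v_{2i-1}$ from everything of smaller index, and $v_{2i}$ being a cut-vertex blocks it from everything of larger index). Staller's strategy is to claim $v_2,v_4,\dots,v_{n-1}$ in order. Immediately after Staller's move $v_{2i}$, the closed neighborhood of $v_{2i-1}$ is contained in $S\cup\{v_{2i-1}\}$ (with $v_{2i-2}\in S$ when $i\geq 2$), so Dominator is forced to answer $v_{2i-1}$; otherwise Staller claims $v_{2i-1}$ on her next move and permanently loses that vertex. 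After $n-1$ such exchanges, only $v_n$ remains and it is Staller's turn; she claims $v_n$, and since its unique neighbor $v_{n-1}$ lies in $S$, vertex $v_n$ can no longer be dominated.

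The main obstacle is the $(1)\Rightarrow(4)$ direction, specifically the argument that Dominator's responses are genuinely forced throughout the entire game rather than at the first move only. This rests on the neighborhood computations $N_G(v_{2i-1})=\{v_{2i-2},v_{2i}\}$, which are pure consequences of (a) and (b); once these are in hand, each forced exchange is an immediate two-line verification, and the parity computation at the end of the game completes the proof.
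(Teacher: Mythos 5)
Your proof is correct and follows essentially the same route as the paper: the same $\{K_2,K_3\}$-factor construction for $(4)\Rightarrow(2)$ using the adjacency of consecutive intervals and the non-cut-vertex giving a triangle, and the same Staller strategy (claim $v_2,v_4,\dots,v_{n-1}$ in order, forcing Dominator onto $v_1,v_3,\dots,v_{n-2}$, then take $v_n$) for $(1)\Rightarrow(4)$. The only cosmetic difference is that you close the equivalence with a one-directional cycle and make the forced-move justification explicit via the neighborhood computation $N_G(v_{2i-1})=\{v_{2i-2},v_{2i}\}$, which the paper leaves implicit.
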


Note that if a connected unit interval does not have outcome $\D$ then its outcome is $\mathcal N$. Indeed, Dominator wins as first by starting at $v_{2}$ and then pairing $v_{2i}$ with $v_{2i+1}$ for $i\geq 2$. All the vertices $v_{2i}$ must be cut-vertices. See Figure~\ref{fig:unit_interval} for a general illustration of these interval graphs.

\begin{proof}
   First note that since $G$ is connected and a unit interval graph, $v_i$ is adjacent to $v_{i+1}$ for all $i$. Moreover, since $K^+_{2,\ell}$ is not a unit interval graph for $\ell\geq 3$, by Propositions \ref{prop:PDSasfactors} and \ref{prop:adjacent} a unit interval graph has a pairing dominating set if and only if it has a $\{K_2,K_3\}$-factor. Thus (2) and (3) are equivalent. By Proposition \ref{prop:factorD}, (2) and (3) implies (1).

  We now prove that (4) implies (2). If $n$ is even, there is a perfect matching $M=\{(v_{2i-1},v_{2i}), i\in\{1,\dots ,n/2\}\}$ in $G$ and thus a $\{K_2,K_3\}$-factor.
    If $n$ is odd and $v_{2i}$ is not a cut-vertex for some $i$, it means that $v_{2i-1}$ and $v_{2i+1}$ are adjacent. Then there is a $\{K_2,K_3\}$-factor with edges $v_1v_2$ up to $v_{2i-3}v_{2i-2}$, triangle $v_{2i-1},v_{2i},v_{2i+1}$ and edges $v_{2i+2}v_{2i+3}$ up to $v_{n-1}v_n$.

    Finally, we prove that (1) implies (4) by contraposition: we assume that (4) is not true and prove that Staller has a winning strategy as first player. Since (4) is not true, $n$ is odd and all vertices $v_{2i}$ are cut-vertices. Then Staller starts by claiming $v_2$. Then Dominator must claim $v_1$ that is only adjacent to $v_2$. Staller goes on by claiming all the vertices $v_{2i}$ in increasing order. Each time, Dominator has to answer $v_{2i-1}$ that can only be dominated by itself. At the end, Staller can claim $v_{n}$ and wins. 
\end{proof}

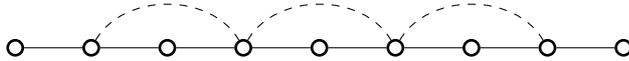
\begin{figure}[h]
    \centering
    \begin{tikzpicture}
        \node[noeud](a) at (0,0){};
        \node[noeud](b) at (1,0){};
        \node[noeud](c) at (2,0){};
        \node[noeud](d) at (3,0){};
        \node[noeud](e) at (4,0){};
        \node[noeud](f) at (5,0){};
        \node[noeud](g) at (6,0){};
        \node[noeud](h) at (7,0){};
        \node[noeud](i) at (8,0){};

        \draw (a)--(b)--(c)--(d)--(e)--(f)--(g)--(h)--(i);
        \draw[dashed] (b) to[out=60, in=120] (d);
        \draw[dashed] (d) to[out=60, in=120] (f);
        \draw[dashed] (f) to[out=60, in=120] (h);

        
    \end{tikzpicture}
    \caption{Representation of the unit intervals with nine vertices where Dominator cannot win in second. Dashed edges can be added or not in $G$. Any other edge added to $G$ will make the graph winning for Dominator in second position by Theorem \ref{thm:unit_int}.}
    \label{fig:unit_interval}
\end{figure}

\subsection{Equivalence between  PDS and outcome $\D$ in interval graphs}\label{subsec:eqinterval}

So far, in Subsection  \ref{subsec:intervalPDS}, we have shown that an interval graph admits a PDS if and only if it admits an APDS. Furthermore, in Subsection \ref{subsec:adjacentPDS}, we have proven that the property of a graph admitting an APDS can be written using an MSO-formula which implies that checking whether a graph on $n$ vertices admits an APDS can be done in $f(t)n$ time for some function $f$ and treewidth $t$. In this subsection, we continue by showing that for an interval graph $G$ we have $o(G)=\D$ if and only if graph $G$ admits a PDS (or equivalently, an APDS). In particular, this implies that the problem of checking whether $o(G)=\D$ is in {\sf NP} rather than in {\sf PSPACE} for interval graphs.

\begin{theorem}\label{thm:eqint}
Let $G$ be an interval graph. We have $o(G)=\D$ if and only if $G$ admits a pairing dominating set.
\end{theorem}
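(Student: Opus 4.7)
The $(\Leftarrow)$ direction is already established: a PDS yields a winning strategy for Dominator as second player (the proposition stated just after Definition~\ref{def:PDS}). For the $(\Rightarrow)$ direction, invoking Proposition~\ref{prop:adjacent}, it suffices to prove that any interval graph $G$ without an APDS satisfies $o(G)\neq \D$, i.e.\ that Staller has a winning strategy as the first player. I would proceed by strong induction on $|V(G)|$.

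Let $v$ be an interval of $G$ with the smallest right endpoint; since every neighbor of $v$ must contain the point $\max(v)$, the set $N[v]$ is a clique, and $N[v]\subseteq N[u]$ for each $u\in N(v)$. If $v$ is isolated, Staller opens with $v$ and wins immediately. If $|N(v)|=1$, say $N(v)=\{u\}$, then the pair $(v,u)$ is forced in any APDS, being the only pair whose two endpoints both lie in $N[v]$; consequently $G-\{u,v\}$ is a smaller interval graph with no APDS (any APDS of it, augmented with $(v,u)$, would give one for $G$). Staller opens with $u$, Dominator is forced to reply with $v$ to keep $v$ dominated, and since $N(v)=\{u\}$ the residual game is literally the fresh domination game on $G-\{u,v\}$ with Staller to move, so the inductive hypothesis finishes the case.

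For the main case $|N(v)|\geq 2$, Staller opens with $v$. Since $N[v]\subseteq N[u]$ for every $u\in N(v)$, the contrapositive of the Bad Placement Lemma (Lemma~\ref{BadPlacementLemma}) reduces the task to exhibiting a single neighbor $u^*\in N(v)$ such that Staller wins from the position $(G,\{u^*\},\{v\})$ as the player to move. I would then consider $G^*=G-\{u^*,v\}$: this is still an interval graph with strictly fewer vertices, and it admits no APDS, because any APDS of $G^*$ together with the pair $(v,u^*)$---which automatically covers the entire clique $N[v]$, since $N[v]\subseteq N[u^*]$---would produce an APDS of $G$, contradicting our hypothesis. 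By induction, Staller has a winning first-player strategy on $G^*$.

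The main obstacle is then translating this strategy on $G^*$ into a winning strategy from $(G,\{u^*\},\{v\})$: after Dominator's move $u^*$ in $G$, the vertices in $N_G(u^*)\setminus\{v\}$ are already dominated in $G$, whereas in the fresh game on $G^*$ these same vertices still require domination; an undominated vertex produced by Staller's inductive strategy on $G^*$ might therefore be ``saved'' by $u^*$ in $G$. The resolution is to choose $u^*$ carefully---a natural choice is the neighbor of $v$ with the smallest right endpoint, which minimises $N_G(u^*)\setminus N[v]$---and to argue that Staller's strategy on $G^*$ can be adapted so that the vertex she leaves undominated actually lies in $V(G)\setminus N_G[u^*]$. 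This transfer step is the principal technical difficulty of the proof; once it is established, the contrapositive of the Bad Placement Lemma yields $o(G)\neq\D$ and completes the induction.
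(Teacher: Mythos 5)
Your reduction of the problem is sound as far as it goes: the backward direction, the observation that the first-ending interval $v$ is simplicial with $N[v]\subseteq N[u]$ for every $u\in N(v)$, the use of the contrapositive of Lemma~\ref{BadPlacementLemma} to reduce to a single Dominator reply $u^*$, the $|N(v)|\le 1$ cases, and the fact that $G^*=G-\{u^*,v\}$ inherits the absence of an APDS are all correct. But the proof has a genuine gap at exactly the point you flag and then defer: the transfer of Staller's first-player win on $G^*$ to a win from the position $(G,\{u^*\},\{v\})$. Your induction hypothesis only delivers that Staller can isolate \emph{some} vertex of $G^*$; what you need is that he can isolate a vertex outside $N_G[u^*]$, since everything in $N_G[u^*]$ is already dominated. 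These are genuinely different statements: $G^*$ may fail to have an APDS ``because of'' vertices that lie in $N_G(u^*)\setminus N_G[v]$ (intervals starting between $\max(v)$ and $\max(u^*)$), and with those vertices pre-dominated the residual position can perfectly well be a Dominator win, in which case your chosen opening $v$ followed by the reply $u^*$ proves nothing (the Bad Placement Lemma is one-directional, so Dominator surviving that line is consistent with $o(G)\neq\D$). Choosing $u^*$ to be the earliest-ending neighbor of $v$ shrinks $N_G(u^*)\setminus N[v]$ but does not empty it, and no argument is offered that the stronger, ``localized'' version of the induction hypothesis holds. Since this step is the entire technical content of the theorem, the proposal as written does not constitute a proof.

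It is worth noting that the paper avoids this local peeling argument altogether, and the machinery it deploys is a good indication of why your transfer step is hard. The paper takes a minimal counterexample $G$ (Dominator wins second, no PDS, minimizing vertices then edges), shows that deleting the first-ending interval $u$ leaves a graph with a PDS (Lemmas~\ref{LemmaU simlicial} and~\ref{LemmaU is first}), then identifies a critical interval $v$ --- the first one for which Dominator's only good replies lie to its left --- and proves through a chain of lemmas (\ref{LemmaAdjacencyPlay}--\ref{LemmavxNeighborhood}, resting on the split/merge Lemmas~\ref{lem:split} and~\ref{lem:union}) that $v$ has a twin $x$. The contradiction is then obtained globally: either a twin coincides with $u$ or $u'$ and the PDS of $G-u$ already covers it, or $G$ is split along the pair $(v,x)$ into two strictly smaller Dominator-win interval graphs, each of which has a PDS by minimality, and these combine with the pair $(v,x)$ into a PDS of $G$. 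In other words, the induction is used to \emph{produce} pairing dominating sets on both sides of a carefully located cut, rather than to produce a Staller strategy on a vertex-deleted subgraph; this sidesteps precisely the ``saved by $u^*$'' obstruction that your approach runs into.
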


In the following, we present multiple lemmas that will be combined together to obtain the proof of the above result. This proof will be done by contradiction, by assuming that there exists an interval graph $G$ such that:
\begin{itemize}
    \item $o(G)=\D$;
    \item $G$ does not admit a pairing dominating set.
\end{itemize}

We assume that this counterexample $G$ satisfies the above conditions with minimum number of vertices and among them minimum number of edges. Then, the proof is organized in three major steps:
\begin{enumerate}
    \item We prove that $G - u$ admits a pairing dominating set where $u$ is the first interval to end (Lemmas \ref{LemmaU simlicial} and \ref{LemmaU is first}). The same property holds for $G-u'$ with $u'$ being the last interval to start.
    \item We then  consider the first interval $v$ for which Dominator has to answer on the left of $v$ if Staller starts by claiming $v$. We then show that $v$ is twin with another interval $x$ (Lemmas~\ref{lem:split} to~\ref{LemmavxPair}). 
    \item The end of the proof then considers the two following cases to show the contradiction, i.e. that $G$ admits a pairing dominating set: if $u$ (or equivalently $u'$) corresponds to one of the twins, then it is possible to adapt the pairing dominating set of $G - u$ to build a pairing dominating set of $G$ by simply doing an exchange with one of the twin intervals. In the other case, the graph $G$ will be partitioned into two non-empty parts that are winning for Dominator. The minimality of the counterexample ensures that each part admits a pairing dominating set. Then a general pairing dominating set of $G$ will be build from these two sets by adding the twin pair in the set.
\end{enumerate}

We say that a set of pairs of vertices $S$ {\em pair-covers} a vertex $x$ if there is a pair $(u,v)\in S$ such that $x$ is adjacent to both $u$ and $v$.
It is {\em valid} if all the pairs are disjoint and if the vertices inside a pair are adjacent.
Note that if $S$ is valid and pair-covers all vertices in $V$, then $S$ is an adjacent pairing dominating set.

We consider an interval representation of $G$ where no intervals start or finish in the same point.
Let $u$ be the interval in $G$ with the smallest $\max(u)$ such that there is a valid set of pairs of vertices of  $G$ that pair-covers all the vertices, called $Y'$, ending strictly after $\max(u)$. We further denote $Y=\{u\}\cup Y'$. 
By minimality of $u$ and since $G$ has no APDS, there are no valid sets of pairs that pair-cover all  vertices in the set $Y$.  In the following lemma, we first show that no intervals end between $\min(u)$ and $\max(u)$.

\begin{lemma}\label{LemmaU simlicial}
There are no intervals that end between $\min(u)$ and $\max(u)$. In particular, all the intervals intersecting $u$ contain $\max(u)$. Therefore, $u$ is simplicial (i.e. $N[u]$ is a clique).
\end{lemma}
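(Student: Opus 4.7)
The plan is a proof by contradiction: assume some interval $w$ has $\max(w) \in (\min(u), \max(u))$ and modify the given valid pair-covering $S$ of $Y'$ into a valid pair-covering of $Y = Y' \cup \{u\}$, contradicting the fact that $Y$ admits no such covering. Observe first that $w$ meets $u$ (its right endpoint lies strictly inside $u$), so $w$ and $u$ are adjacent, and $w \neq u$ since endpoints are distinct.

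The argument then splits according to whether $u$ already appears in some pair of $S$. If $u$ is paired in $S$ with some vertex $p$, then $u \in N[u]\cap N[p]$ and $S$ already pair-covers $u$, hence $S$ pair-covers the whole of $Y$ --- contradiction. Otherwise $u$ is free in $S$. If $w$ is also free, then $S \cup \{(u,w)\}$ is valid (pairs disjoint, $u$ and $w$ adjacent) and pair-covers $Y$, again a contradiction.

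The remaining, and least trivial, case is when $w$ appears in a pair $(w,p) \in S$. The idea is to swap $(w,p)$ for $(u,w)$ and take $S' = (S \setminus \{(w,p)\}) \cup \{(u,w)\}$; this is visibly valid since $u$ was not used by $S$ and, by validity of $S$, $p$ appeared only in $(w,p)$. The main obstacle --- and the whole point of the case --- is to check that every vertex $z \in Y'$ formerly pair-covered by $(w,p)$ is still pair-covered by $(u,w)$. This rests on the geometric observation that since $z$ meets $w$ one has $\min(z) \leq \max(w) < \max(u)$, and since $z \in Y'$ one has $\max(z) > \max(u)$; therefore $z$ contains the point $\max(u)$ and is in particular adjacent to $u$. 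Combined with $z$ being adjacent to $w$, the pair $(u,w)$ pair-covers $z$. All other vertices of $Y'$ are still covered by $S \setminus \{(w,p)\}$ and $u$ itself is pair-covered by $(u,w)$, so $S'$ pair-covers $Y$ --- contradiction.

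Given this, the remaining consequences of the lemma are direct. Since endpoints are assumed distinct, any vertex $v \neq u$ that intersects $u$ must have $\max(v) > \max(u)$: the alternatives $\max(v) < \min(u)$, $\max(v) \in (\min(u),\max(u))$, and $\max(v) \in \{\min(u),\max(u)\}$ are ruled out respectively by $v$ meeting $u$, by the claim just proved, and by distinctness of endpoints. Combined with $\min(v) \leq \max(u)$ (again required for intersection), this gives $\min(v) \leq \max(u) \leq \max(v)$, so $v$ contains the point $\max(u)$. Since $u$ itself contains $\max(u)$, every two vertices of $N[u]$ share this point; hence $N[u]$ is a clique and $u$ is simplicial.
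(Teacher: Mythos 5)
Your proof is correct and follows essentially the same route as the paper's: assume an interval ends strictly inside $u$, then extend or modify the valid pair-covering of $Y'$ to one of $Y$ by pairing $u$ with that interval, contradicting the choice of $u$. Your handling of the last case is in fact slightly cleaner than the paper's, which splits on whether the partner $p$ ends before or after $\min(u)$; your single observation that any $z\in Y'$ adjacent to $w$ satisfies $\min(z)\leq\max(w)<\max(u)<\max(z)$ and hence contains $\max(u)$ covers both sub-cases at once.
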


\begin{proof}
We prove that there are no intervals ending between $\min(u)$ and $\max(u)$.
Assume by contradiction that there is such an interval $v$.
Consider a valid set of pairs $S$ that pair-covers all the vertices in $Y'$. We have $u\not\in S$ since $S$ does not pair-cover $u$. 
If $v$ is not in a pair of $S$, then $S\cup \{(u,v)\}$ pair-covers vertices in $Y$, a contradiction.
Thus, let $w$ be such that $(v,w)\in S$. If $w$ ends after $\min(u)$, then $(v,w)$ is covering $u$, a contradiction. Thus $w$ ends before $\min(u)$. Then the intervals that are covered by $v$ and $w$ are starting before $\min(u)$. Thus, $S'=(S\cup \{(u,v)\})\setminus \{(v,w)\}$ is valid and pair-covers every interval ending after $\max(u)$ and also $u$. Indeed, if an interval was ending after $\max(u)$ and was pair-covered by $(v,w)$ it is also pair-covered by $(v,u)$. Thus we obtain a contradiction. The claim follows by contradiction since now $w$ cannot start before or after $\min(u)$.
\end{proof}

We next show that no intervals end before interval $u$.

\begin{lemma}\label{LemmaU is first}
Interval $u$ is the first interval to end, that is, $\max(u)$ is smallest among all the vertices.
\end{lemma}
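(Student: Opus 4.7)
\emph{Proof plan.} I would argue by contradiction: assume that some interval of $G$ ends strictly before $u$, and let $w$ be the first interval to end in $G$. By Lemma~\ref{LemmaU simlicial} no interval ends in the open interval $(\min(u),\max(u))$, hence $\max(w)<\min(u)$, and $w$ is disjoint from $u$. My goal will be to construct a valid set of pairs in $G$ that pair-covers $V(G)\setminus\{w\}$; since $\max(w)<\max(u)$ and $V(G)\setminus\{w\}$ is exactly the set of intervals ending strictly after $\max(w)$, such a cover would contradict the minimality of $\max(u)$ in the definition of $u$.

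First, I would analyse the pair-cover $S$ of $Y'$. I would show that every vertex $v\in N(u)$ must appear in some pair of $S$: otherwise $S\cup\{(u,v)\}$ would be a valid pair-cover of $Y$, which we have already seen does not exist. Similarly, $u$ itself cannot belong to any pair of $S$, since such a pair would trivially cover $u$. Furthermore, for each $v\in N(u)$ with partner $v^\star$ in $S$, I would show $v^\star\notin N[u]$: otherwise both endpoints of $(v,v^\star)$ would lie in the clique $N[u]$ (by Lemma~\ref{LemmaU simlicial}), so $u$ would be pair-covered by $S$. This forces each such $v^\star$ to satisfy either $\min(v^\star)>\max(u)$ (strictly right of $u$) or $\max(v^\star)<\min(u)$ (strictly left of $u$).

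Next, I would leverage the minimum-counterexample hypothesis on $G-w$. Since $|V(G-w)|<|V(G)|$, either $G-w$ admits a pairing dominating set $P$, or $o(G-w)\neq\D$. In the first case, $P$ is directly a valid pair-cover of $V(G)\setminus\{w\}$ in $G$ (as vertex-adjacency is preserved from $G-w$ to $G$), which gives the desired contradiction. In the second case, observe that because $w$ is the first interval to end, every vertex in $N[w]$ contains $\max(w)$, so $N[w]$ is a clique and $N[w]\subseteq N[v]$ for every $v\in N(w)$. By Lemma~\ref{BadPlacementLemma}, the position $(G,\{v\},\{w\})$ is winning for Dominator playing second; since $v$ dominates $w$, this is equivalent to Dominator winning on $G-w$ as first player from opening move $v$. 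Combined with $o(G-w)\neq\D$, this forces $o(G-w)=\mathcal{N}$, so Staller has a winning opening move $s_0$ on $G-w$.

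Finally, I would transfer Dominator's winning strategy against $s_0$ in $G$ into a winning strategy for Dominator playing second on $G-w$, yielding a contradiction with $o(G-w)=\mathcal{N}$. The transfer consists in replacing every move that Dominator would make on $w$ by some still-available $v\in N(w)$, whose legitimacy is guaranteed by Lemmas~\ref{superlemma} and~\ref{BadPlacementLemma} via the inclusion $N[w]\subseteq N[v]$. The main obstacle is precisely this last step: ensuring that such a substitute $v\in N(w)$ is always available when needed, which should be reduced to a case analysis exploiting the edge-minimality of $G$ (each edge being necessary for $o(G)=\D$) to rule out pathological configurations where Staller has claimed all of $N(w)$ before Dominator needs the substitution.
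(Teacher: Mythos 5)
Your overall architecture is sound and close in spirit to the paper's: argue by contradiction, use the minimality of $\max(u)$ as the target contradiction, and invoke the vertex-minimality of the counterexample on a strictly smaller interval graph to get either a PDS (immediate contradiction) or a non-$\D$ outcome that must then be refuted by a strategy transfer. Your Case 1 is correct, and your deduction that $o(G-w)=\mathcal{N}$ in Case 2 (via Lemma~\ref{BadPlacementLemma} and the fact that $N[w]$ is a clique) is also correct. The problem is the final transfer step, and the gap you flag there is genuine and not cosmetic. When Dominator's winning strategy on $G$ calls for claiming $w$, the substitution by some $v\in N(w)$ with $N[w]\subseteq N[v]$ only works if such a $v$ is unclaimed or already held by Dominator. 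Staller can in principle claim all of $N(w)$ first (e.g.\ when $N(w)$ is small), at which point the $G$-strategy claims $w$ precisely because $w$ is the only remaining way to dominate $w$ and its Staller-held neighbors; in $G-w$ those neighbors must then be dominated from outside $N[w]$, and nothing in the $G$-strategy guarantees that. Your suggestion that edge-minimality rules this out is not substantiated, and I do not see how it would: edge-minimality tells you $o(G-e)\neq\D$ for each edge $e$, which does not obviously constrain the order in which Staller can saturate $N(w)$.

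The paper sidesteps this by choosing a different subgraph: $G'=G[Y]$ where $Y$ consists of $u$ together with all intervals ending after $\max(u)$ (so possibly many vertices are deleted, not just one). The point of that choice is that the deleted vertices can only help dominate intervals containing $u$, i.e.\ the interface lies inside the clique $N[u]$, and crucially the vertex $u$ itself \emph{stays} in $G'$ with $N_G[u]=N_{G'}[u]$. Since $u$ must be dominated in the $G$-game by some Dominator vertex of $N[u]\subseteq Y$, that single vertex dominates the entire interface clique, so the naive transfer (play arbitrarily when the $G$-strategy leaves $Y$) already works. In your version the vertex that would force Dominator into the interface clique $N[w]$ is $w$ itself, which is exactly the vertex you delete, so no such forcing is available. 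To repair your proof you would essentially have to replace $G-w$ by $G[Y]$ and rerun the paper's argument; as written, the last step does not go through.
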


\begin{proof}
Let $G'=G[Y]$ be the graph $G$ that is induced by $Y$. We want to prove that $G=G'$. Assume by contradiction that it is not the case.

Graph $G'$ has no adjacent pairing dominating sets. Indeed, such a set of pairs would pair-cover $u$ and all the intervals ending after $\max(u)$, contradicting the definition of $u$.
We give a strategy for Dominator to win as the second player in $G'$, contradicting the minimality of $G$.

Let $\mathcal S$ be the strategy of Dominator in $G$. Dominator follows $\mathcal S$ in $G'$. When $\mathcal S$ asks Dominator to play outside of $G'$, Dominator claims any vertex of $G'$.
At the end, the only vertices of $G'$ that could not be dominated are the vertices in $N_{G'}[u]$. But $u$ is dominated in the game $G$. Since $N_G[u]=N_{G'}[u]$ by Lemma \ref{LemmaU simlicial}, vertex $u$ is also dominated in the game $G'$.
Thus, Dominator wins as the second player, a contradiction with the minimality of $G$. Hence, $G=G'$. In particular, there are no vertices that end before $\max(u)$, so $u$ is the first interval to end.\end{proof}

As a consequence, there is a valid set of pairs $S$ that pair-covers $V\setminus\{u\}$. Moreover, we can use symmetrical arguments for the last interval to start (denoted by $u'$).

\begin{corollary}\label{Corollary u' is last}
Let $u'$ be the last interval to start. Then, there is a valid set of pairs that pair-covers $V\setminus\{u'\}$.
\end{corollary}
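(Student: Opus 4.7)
The plan is to exploit the left--right symmetry of an interval representation. Given the interval representation of $G$ fixed before Lemma \ref{LemmaU simlicial}, I would consider the mirrored representation obtained by replacing each interval $[a,b]$ with $[-b,-a]$. This is again a valid interval representation of the same abstract graph $G$, since reflection of the real line preserves the intersection pattern of intervals. Under this reflection, the roles of $\min$ and $\max$ are swapped: the first interval to end in the original representation becomes the last interval to start in the mirrored one, and conversely the last interval to start in the original becomes the first interval to end in the mirrored representation.

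The key point is that the hypotheses we placed on $G$ --- namely that $o(G)=\D$, that $G$ admits no adjacent pairing dominating set, and that $G$ is a minimum counterexample to Theorem~\ref{thm:eqint} --- depend only on $G$ as an abstract graph, not on the chosen representation. Therefore the entire chain of reasoning used to define $u$ and to prove Lemmas~\ref{LemmaU simlicial} and~\ref{LemmaU is first} can be re-run verbatim with the mirrored representation in place of the original. In the mirrored representation, I would let $u'$ play the role of $u$: namely the interval whose mirrored right endpoint is smallest, which is exactly the interval whose original left endpoint $\min(u')$ is largest, i.e.\ the last interval to start in $G$.

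Applying the mirrored analogue of Lemma~\ref{LemmaU is first} then yields a valid set of pairs $S'$ that pair-covers every vertex other than $u'$, which is precisely the content of the corollary. The main obstacle I anticipate is purely notational rather than mathematical: one must check that ``valid set of pairs that pair-covers $Y'$'' and ``no intervals that end between $\min(u)$ and $\max(u)$'' translate correctly under mirroring (respectively to ``valid set of pairs that pair-covers the set of intervals starting strictly before $\min(u')$'' and ``no intervals that start between $\min(u')$ and $\max(u')$''). Once this dictionary is fixed, the proof is an immediate copy of the arguments of Lemmas~\ref{LemmaU simlicial} and~\ref{LemmaU is first} with the inequalities reversed.
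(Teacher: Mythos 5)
Your proposal is correct and matches the paper's own argument: the paper simply states that the corollary follows ``by symmetrical arguments'' applied to the last interval to start, which is exactly the left--right mirroring of the representation that you spell out. Your explicit dictionary for translating the statements of Lemmas~\ref{LemmaU simlicial} and~\ref{LemmaU is first} under reflection is a faithful (and slightly more detailed) rendering of what the authors leave implicit.
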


Without loss of generality, we may assume that $u$ is the first interval to start (and to end by Lemma \ref{LemmaU is first}) and $u'$ is the last interval to end (and to start as in Corollary \ref{Corollary u' is last}).



%

In the proofs of following lemmas, we will often partition the intervals according to their rightmost points and use two different strategies for Dominator on each part. Before going into the details of these proofs, we give two general lemmas that will let us apply a strategy of Dominator on a small part of a graph and then merge strategies. Note that these lemmas are actually true for any graph. Following notation will be used in multiple following lemmas. Let $(V_1,V_2)$ be a partition of $G$. Let $V'_1$ be the vertices of $V_1$ that are adjacent to at least one vertex of $V_2$ and $V'_2$ be the vertices of $V_2$ that are adjacent to at least one vertex of $V_1$.

\begin{lemma}\label{lem:split} 
Assume that Dominator has a winning strategy from the position $(G,D,S)$ playing second. Then there exists a strategy for Dominator playing second to dominate all the vertices of $V_1\setminus V'_1$ using only unclaimed vertices of $V_1$.    
\end{lemma}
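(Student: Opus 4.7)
The plan is to let Dominator ``shadow'' her given winning strategy on a virtual copy of the game starting from $(G,D,S)$, while in the real game she confines her own moves to $V_1$. The decisive observation is that by definition of $V_1'$, any vertex of $V_1\setminus V_1'$ has its entire closed neighborhood inside $V_1$; hence any vertex that dominates it must itself lie in $V_1$.

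Concretely, I would describe Dominator's strategy as follows. She maintains, in parallel to the real game, a virtual game initialized at $(G,D,S)$. Each round, after Staller plays $x$ in the real game, virtual Staller also plays $x$ whenever $x$ is still unclaimed in the virtual position, and otherwise plays an arbitrary unclaimed vertex. Dominator then consults her winning strategy on the virtual game to get a response $y$, which she records as her virtual move. In the real game, she plays $y$ if $y\in V_1$ and $y$ is still unclaimed in real; otherwise she plays any unclaimed vertex of $V_1$ (switching to $V_2$ only once $V_1$ has been fully claimed in the real game, at which point the subgoal of dominating $V_1\setminus V_1'$ has already been settled).

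The heart of the argument will be to prove, by induction on the rounds, the invariant $D^{\text{virt}}\cap V_1\subseteq D^{\text{real}}$: every $V_1$-vertex that virtual Dominator has claimed is also held by real Dominator. Once this invariant is in hand the conclusion is immediate: since virtual Dominator follows her winning strategy, every $v\in V_1\setminus V_1'$ is dominated virtually, so some $u\in D^{\text{virt}}\cap N[v]$ exists; because $N[v]\subseteq V_1$, we get $u\in D^{\text{virt}}\cap V_1\subseteq D^{\text{real}}$, and thus $v$ is dominated in the real game by a vertex Dominator actually played in $V_1$.

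The main obstacle will be the bookkeeping between the virtual and real games, which amounts to verifying that the invariant survives each round when virtual Dominator selects some $y\in V_1$. The two potentially problematic situations are that $y$ is already in $S^{\text{real}}$, or that $y$ is already in $D^{\text{real}}$ from an earlier substitute move. The first case must be ruled out by an inductive sub-argument showing that Staller's virtual and real claims always agree on $V_1$ (so a $V_1$-vertex virtually unclaimed cannot be really claimed by Staller); the second case is harmless because $y$ is already held by real Dominator, and she simply picks some other unclaimed vertex of $V_1$ as her substitute. Once these cases are dispatched, the invariant propagates and the lemma follows.
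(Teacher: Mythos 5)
Your proof is correct, but it takes a genuinely different route from the paper. The paper argues by contradiction from Staller's side: if no such restricted strategy existed, then by determinacy of this finite subgame Staller could isolate some vertex of $V_1\setminus V_1'$ against a Dominator confined to $V_1$; playing that same strategy in the full game from $(G,D,S)$ (treating Dominator's moves in $V_2$ as passes) still isolates that vertex, since its whole closed neighbourhood lies in $V_1$, contradicting that Dominator wins. You instead construct the strategy directly by projecting Dominator's winning strategy onto $V_1$ through a virtual game; your two invariants are exactly the right ones and both propagate (the dangerous case $y\in V_1$ with $y\in S^{\mathrm{real}}$ is excluded because every $V_1$-vertex claimed by real Staller is claimed in the virtual game in the same round, and equality of move counts guarantees substitute moves exist). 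Two details worth making explicit: (i) the vertex dominating $w\in V_1\setminus V_1'$ at the end may lie in the initial set $D\cap V_1$ rather than in $D^{\mathrm{virt}}$, which is harmless since those vertices are held in the real game too; and (ii) your claim that the subgoal is settled once $V_1$ is exhausted does hold, but only via the derived inclusion $S^{\mathrm{real}}\cap V_1\subseteq S^{\mathrm{virt}}$ (otherwise some $w$ with $N[w]\subseteq S^{\mathrm{real}}$ would be isolated in the virtual game, contradicting that the virtual Dominator wins). Your approach buys an explicit strategy with no appeal to determinacy or to the principle that extra tempo never hurts Staller; the paper's buys brevity at the cost of leaving both of those implicit.
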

Note that vertices of $D$ that are in $V_2$ are not useful for Dominator to dominate the vertices of $V_1\setminus V'_1$.

\begin{proof}
    Assume by contradiction that this is not true. Then Staller playing first can isolate a vertex of $V_1\setminus V'_1$ with Dominator playing only vertices in $V_1$. But then Staller could win playing first in $(G,D,S)$ by applying this strategy. Indeed the vertex of $V_1\setminus V'_1$ that is isolated by Staller cannot be dominated by a move in $V_2$.
\end{proof}

\begin{lemma}\label{lem:union}
Consider the position $(G,D,S)$. Assume that Dominator has a strategy $\mathcal S_1$ playing second to dominate all the vertices of $V_1\setminus V'_1$ using only unclaimed vertices of $V_1$ and a strategy $\mathcal S_2$ playing second to dominate all the vertices of $V_2\setminus V'_2$ using only unclaimed vertices of $V_2$. Finally, assume that the vertices of $V'_1\cup V'_2$ are all dominated by a vertex in $D$. Then Dominator has a winning strategy in  $(G,D,S)$ playing second.   
\end{lemma}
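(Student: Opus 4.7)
The plan is to describe an explicit ``parallel simulation'' strategy for Dominator playing second from $(G,D,S)$, running $\mathcal S_1$ on the $V_1$-side and $\mathcal S_2$ on the $V_2$-side of the game. Specifically, whenever Staller claims a vertex $s \in V_i$ (for $i \in \{1,2\}$), Dominator treats this as a Staller move in the $V_i$-subgame and answers with the move prescribed by $\mathcal S_i$. By hypothesis, this reply is an unclaimed vertex of $V_i$, hence a valid move in $G$.

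First, I would verify that this is a well-defined, legal strategy. Because every Dominator reply stays inside the same part as the preceding Staller move, the two simulations use disjoint vertex pools and never interfere. From the viewpoint of $\mathcal S_i$, Dominator's responses follow Staller's moves in $V_i$ in the correct alternating order; the only effect of Staller playing in $V_{3-i}$ is that Staller effectively ``passes'' in the $V_i$-subgame, and by the standard Maker-Breaker no-pass principle recalled in Section~\ref{sec:tools}, such passes cannot cause $\mathcal S_i$ to fail.

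Next, I would check that the game ends with every vertex of $G$ dominated. The hypothesis guarantees that every vertex of $V_1' \cup V_2'$ is already dominated by a vertex of $D$, and this property is preserved throughout the game. Every remaining vertex lies in $(V_1 \setminus V_1') \cup (V_2 \setminus V_2')$. A vertex in $V_1 \setminus V_1'$ has all its neighbors inside $V_1$, so by the guarantee of $\mathcal S_1$ -- that it dominates $V_1 \setminus V_1'$ using only unclaimed vertices of $V_1$ -- it is dominated by some Dominator-claimed vertex in $V_1$, and symmetrically for $V_2 \setminus V_2'$ via $\mathcal S_2$. Therefore Dominator wins from $(G,D,S)$ as the second player.

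The main mild obstacle is the bookkeeping for the parallel simulation: one must justify that the $V_i$-subgame in which Staller may occasionally skip turns still falls within the scope of the hypothesis that $\mathcal S_i$ wins against every Staller strategy. This is handled by the standard observation that granting Staller extra free passes never damages Dominator's winning position, so $\mathcal S_i$ still achieves its domination goal against such a ``weakened'' opponent; the three steps above then combine into a direct argument.
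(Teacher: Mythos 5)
Your proposal is correct and follows essentially the same route as the paper: Dominator answers each Staller move in $V_i$ by the move prescribed by $\mathcal S_i$, so $V_i\setminus V_i'$ is dominated by $\mathcal S_i$ while $V_1'\cup V_2'$ is already dominated by $D$. Your extra care about the interleaving (Staller's moves in the other part acting as passes, harmless by the no-skip principle) is a reasonable elaboration of a point the paper leaves implicit.
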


\begin{proof}
When Staller claims a vertex in $V_i$, Dominator follows $\mathcal S_i$ and claims a vertex in $V_i$. The vertices of $V_i\setminus V_i'$ will be dominated by the strategy $\mathcal S_i$ whereas the vertices in $V'_1 \cup V'_2$ are dominated directly by $D$. 
\end{proof}

Let us denote by $G^{-v}$ the position $(G,\emptyset,\{v\})$  and by $G^{-v+w}$ the position $(G,\{w\},\{v\})$. Following lemma states that the first move of Dominator in $G$ can be adjacent to the move of Staller.

\begin{lemma}\label{LemmaAdjacencyPlay}
For any vertex $v\in V(G)$, there exists a winning strategy for Dominator from position $G^{-v+x}$ for some vertex $x\in N(v)$.
\end{lemma}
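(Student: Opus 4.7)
The starting point is that, since $o(G)=\D$, Dominator has a winning strategy $\mathcal{S}$ from the position $G^{-v}=(G,\emptyset,\{v\})$. Let $y=\mathcal{S}(v)$ denote her prescribed first response. The easy case is $y\in N(v)$: setting $x=y$, the position reached after Dominator's first move is precisely $G^{-v+x}$ with Staller to play, and $\mathcal{S}$ continues to provide a winning strategy. So we may assume $y\notin N(v)$, in which case $v$ remains undominated after Dominator's opening move.

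The main case is therefore $y\notin N(v)$. Since $\mathcal{S}$ is a winning strategy and $v$ must ultimately be dominated in every complete play, Dominator must at some later turn claim a vertex of $N(v)$ in every continuation of $\mathcal{S}$; I propose to take $x\in N(v)$ to be such a vertex, and to prove Dominator wins from $G^{-v+x}$ by a simulation argument. Dominator plays from $(G,\{x\},\{v\})$ by maintaining a virtual game starting at $(G,\{y\},\{v\})$, in which she follows $\mathcal{S}$; Staller's actual moves are mirrored as virtual Staller moves, and Dominator's virtual response dictated by $\mathcal{S}$ is reproduced in the actual game, with one substitution: whenever $\mathcal{S}$ would instruct her to play $x$ in the virtual game (already her claim in the actual game), she instead plays $y$ in the actual game. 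At the moment of this swap, the actual and virtual Dominator claim sets become identical; thereafter the two games remain synchronized, so the final actual claim set contains the virtual dominating set and Dominator wins.

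The central obstacle is to guarantee that the simulation remains valid against every Staller strategy. Two difficulties arise. First, Staller may play $y$ in the actual game at some step (in particular as her first response $s_1$), but $y$ is always Dominator's claim in the virtual game, so no direct virtual analogue exists; this sub-case would need a separate argument, for instance a proof that Dominator also wins from $(G,\{x\},\{v,y\})$ with Dominator to play, possibly via Lemma~\ref{BadPlacementLemma} when an appropriate neighborhood inclusion holds, or by invoking the minimality of the counterexample $G$ to reduce to a smaller instance. Second, $\mathcal{S}$ need not always play $x$ for every Staller sequence; if the swap never occurs, the actual claim set differs from the virtual one by replacing $y$ with $x$, which may fail to cover $N[y]$. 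Resolving this requires either a careful choice of $x$ as a neighbor of $v$ that $\mathcal{S}$ is forced to claim against a worst-case Staller strategy, or an appeal to a Maker-Breaker monotonicity principle allowing $y$ to be added as an extra claim. Organizing these case distinctions and verifying that the resulting actual claim set always dominates $V$ is the technical heart of the proof.
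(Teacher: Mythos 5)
Your proposal is not a complete proof: it correctly identifies the two obstructions to the naive ``simulate $\mathcal S$ and swap $x$ for $y$'' argument, but it does not overcome either of them, and overcoming them is precisely the content of the lemma. Concretely: (i) although every winning play of $\mathcal S$ must eventually claim some vertex of $N(v)$, \emph{which} vertex gets claimed depends on Staller's moves, so there is no single $x\in N(v)$ singled out by $\mathcal S$ to start from; (ii) the substitution ``play $y$ whenever $\mathcal S$ says $x$'' breaks down both when Staller claims $y$ in the actual game and when $\mathcal S$ never asks for $x$ at all, in which case the final actual claim set is the virtual one with $y$ replaced by $x$ and need not dominate $N[y]$. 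You flag both issues and defer them (``would need a separate argument'', ``the technical heart of the proof''), so the argument as written does not establish the statement. Note also that the lemma is asserted for the specific minimal counterexample $G$ (an interval graph with outcome $\D$ and no PDS), and it is far from clear that a purely game-theoretic simulation could work for arbitrary graphs.

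The paper's proof is entirely different and leans on the interval structure and on the earlier lemmas about the minimal counterexample. It argues by contradiction: if no $x\in N(v)$ works, then Lemma~\ref{BadPlacementLemma} rules out any $w$ with $N[v]\subseteq N[w]$, and some non-neighbor $y$ with $\min(y)>\max(v)$ must be a winning reply. One then picks a carefully chosen neighbor $h$ of $v$ that is not contained in any other interval (using the pairing dominating set $S_u$ of $G-u$ guaranteed by Lemma~\ref{LemmaU is first}), splits the vertex set at $\max(h)$ into $V_1$ and $V_2$, extracts via Lemma~\ref{lem:split} a partial strategy on $V_1$ from the winning position $G^{-v+y}$, plays the pairing strategy of $S_u$ on $V_2$, and glues them with Lemma~\ref{lem:union} using the fact that $h$ dominates all boundary vertices $V_1'\cup V_2'$. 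This yields a winning strategy from $G^{-v+h}$ with $h\in N(v)$, the desired contradiction. If you want to salvage your approach, you would need to replace the simulation by some such structural decomposition; as it stands the key step is missing.
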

\begin{proof}
Let us assume on the contrary that there exists a vertex $v$, such that for each vertex $x\in N(v)$, there is a winning strategy for Staller from position $G^{-v+x}$. 
By Lemma \ref{BadPlacementLemma}, there is no interval $w$ such that $N[v]\subseteq N[w]$, otherwise Dominator could win from position $G^{-v+w}$. In particular, $v\notin \{u,u'\}$ (where $u$ and $u'$ are the first and last intervals of $G$).
Since Dominator is winning second in $G$, there exists a vertex $y\not\in N(v)$ such that there is a winning strategy for Dominator from position $G^{-v+y}$. 

Let us assume, without loss of generality, that $\min(y)>\max(v)$. Let $S_u$ be a pairing dominating set in $G\setminus \{u\}$. 
Let $h\in N(v)$ be a vertex chosen in the following way: 
\begin{enumerate}
    \item If $v$ is in a pair $(v,t)\in S_u$, 
    then we choose $h$ to be an interval which is not included in any other intervals and which contains $t$ (we can have $h=t$ if $t$ is not included in any intervals).
    \item If $v$ is not in any pair of $S_u$, then we choose $h\in N(v)$ such that $\max(h)>\max(v)$ and $h$ is not included in any interval. 
\end{enumerate}
Observe that $h$ is well-defined in the second case since graph $G$ is connected and $y$ starts after $\max(v)$. Moreover, $h$ is not included in any interval of $V(G)$. 
Next, we give a winning strategy for Dominator playing second from position  $G^{-v+h}$, which contradicts our assumption.

We partition $G$ using $h$. Let $V_1=\{w\in V(G)\mid \max(w)< \max(h)\}$ and $V_2=\{w\in V(G)\mid  \max(w)\geq\max(h)\}$. Let $V'_1$ and $V'_2$ be the sets of intervals in $V_1$ and $V_2$ adjacent to intervals of $V_2$ and $V_1$, respectively.
Note that all the intervals of $V'_1\cup V'_2$ are dominated by $h$. Indeed, an interval of $V'_2$ must start before $\max(h)$ to intersect an interval of $V'_1$. Moreover all the intervals of $V'_2$ must start after $\min(h)$ since $h$ is not included in any interval. Thus intervals of $V'_1$ are also dominated by $h$.

Since Dominator is winning in $G^{-v+y}$ playing second, by Lemma \ref{lem:split}, there is a strategy $\mathcal S_1$ for Dominator to dominate all the vertices of $V_1\setminus V'_1$ by claiming only vertices of $V_1\setminus \{v\}$. Note that $y\in V_2$ is not useful for $\mathcal S_1$.

Each vertex in $V_2\setminus V'_2$ is pair-covered by a pair of $S_u$ with only vertices of $V_2$. 
Indeed, if $z\in V_2\setminus V'_2$, then it cannot intersect and thus be dominated by a vertex of $V_1$. 
Let $\mathcal S_2$ be the strategy of Dominator following $S_u$ in $V_2$: whenever Staller claims a vertex of a pair of $S_u$ included in $V_2$, Dominator claims the other element of the pair.
Following this strategy, Dominator will dominate all the vertices of $V_2\setminus V'_2$ using only vertices of $V_2$.

We now apply Lemma \ref{lem:union} to merge $\mathcal S_1$ and $\mathcal S_2$ from position $G^{-v+h}$. Since $h$ dominates all the vertices of $V'_1\cup V_2'$, we can conclude that Dominator has a winning strategy playing second in $G^{-v+h}$, a contradiction.
%
\end{proof}

 From now on, we assume that in a winning strategy of graph $G$, Dominator always plays her first move adjacent to Staller's first move.

Let $v$ be the vertex with the smallest starting point from which the only adjacent vertices Dominator can continue to are on the left of $v$. More formally if Dominator has a winning strategy from position $G^{-v+x}$ for some vertex $x\in N(v)$, then $\max(x)<\max(v)$. We choose $x\in N(v)$ with the largest ending point such that Dominator has a winning strategy from $G^{-v+x}$.
Observe that vertex $v$ exists since Dominator can only answer on the left of $u'$ when Staller starts by claiming it.  Moreover, $v$ is not contained in any interval (otherwise, by Lemma \ref{BadPlacementLemma}, this interval would be a good answer for Dominator and will end after $v$). Interval $x$ is also not contained in any interval $t\neq v$, otherwise it would have been better for Dominator to claim $t$ instead of $x$, contradicting the maximality of $\max(x)$. 


\begin{lemma}\label{LemmavxPair}
There exists a winning strategy for Dominator from position $G^{-x+v}$.
\end{lemma}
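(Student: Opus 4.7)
The plan is to split into two cases based on the interval position of $x$ relative to $v$. The easy case is when $\min(x)\geq\min(v)$: combined with $\max(x)<\max(v)$ this gives $x\subseteq v$ as intervals, hence $N[x]\subseteq N[v]$. Since $o(G)=\D$, Lemma~\ref{BadPlacementLemma} applied to the pair $(v,x)$ directly produces a winning strategy for Dominator from $(G,\{v\},\{x\})=G^{-x+v}$.

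The delicate case is $\min(x)<\min(v)$. There I would attempt a \emph{swap simulation}: Dominator, after her forced move $v$, runs in parallel a virtual copy of the game $G^{-v+x}$ using the known winning strategy $\mathcal{S}_x$, and each time Staller plays $w$ in the real game, Dominator plays the response that $\mathcal{S}_x$ would make against Staller's move $w$ in the virtual game. From move~$3$ onwards the sets of unclaimed vertices in the two games coincide (only the owners of $v$ and $x$ differ), so the responses are well-defined. Writing $D'$ for Dominator's non-initial moves, her real-game set is $\{v\}\cup D'$ while her virtual-game set $\{x\}\cup D'$ dominates $V$ since $\mathcal{S}_x$ is winning.

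I would verify the real win by contradiction: if Staller isolates a vertex $z$ in the real game, then $N[z]\subseteq\{x\}\cup S'$ and $N[z]\cap(\{v\}\cup D')=\emptyset$; in the virtual game $z$ is nonetheless dominated by $\{x\}\cup D'$, so since $D'\cap N[z]=\emptyset$ the witness must be $x$, forcing $z\in N[x]\setminus N[v]$. Combined with $\max(x)<\max(v)$, this pushes $z$ strictly to the left of $v$, i.e.\ $\max(z)<\min(v)$ while $z$ intersects $x$.

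The main obstacle is ruling out such a $z$ in the delicate case. My plan is to combine two ingredients coming from the defining properties of $v$ and $x$. First, because $\min(x)<\min(v)$ and $v$ was chosen to minimise the starting point among vertices forcing a left-only reply, Staller's first move $x$ admits an adjacent winning reply $y$ for Dominator with $\max(y)\geq\max(x)$; this provides an alternative strategy tailored to dominating $N[x]\setminus N[v]$. Second, the maximality of $\max(x)$ among adjacent winning replies to $v$ constrains the structure of such $y$. I would then partition $V$ at $\min(v)$ and apply Lemmas~\ref{lem:split} and~\ref{lem:union} to splice the swap simulation of $\mathcal{S}_x$ on the right-of-$v$ subgraph with the auxiliary strategy derived from $y$ on the left-of-$v$ subgraph, checking that every boundary vertex is dominated either by $\{v\}$ or by moves in the combined strategy. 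This composite strategy would be the desired winning strategy for Dominator in $G^{-x+v}$.
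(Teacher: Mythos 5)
Your opening reduction is right: when $\min(x)\geq\min(v)$ the interval $x$ is contained in $v$, so $N[x]\subseteq N[v]$ and Lemma~\ref{BadPlacementLemma} settles it, which is exactly how the paper dismisses that case. Your ``swap simulation'' correctly isolates the problematic vertices as those $z\in N[x]\setminus N[v]$ with $\max(z)<\min(v)$ (this is what Lemma~\ref{lem:split} applied to $G^{-v+x}$ delivers in the paper), and your auxiliary reply $y$ is the paper's interval $f$, obtained from Lemma~\ref{LemmaAdjacencyPlay} together with the minimality of $\min(v)$. The gap is in the final splicing step, and it is a real one, not a matter of polish.

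Two things go wrong. First, the cut point: Lemma~\ref{lem:union} needs every vertex of $V_1'\cup V_2'$ to be dominated by a vertex Dominator has \emph{already} claimed, and in $G^{-x+v}$ that is $v$ alone. If you cut at $\min(v)$ so that the left part consists of intervals ending before $\min(v)$, no vertex of $V_1'$ meets $v$ and the hypothesis fails; the paper always cuts at $\max(g)$ for an interval $g$ claimed by Dominator and not included in any other interval ($g=v$ or $g=f$), and it is this non-inclusion that forces every boundary interval to cross $g$. Second, and more seriously, you must split on where $y$ sits, and in one of the two cases no composite strategy for $G^{-x+v}$ can be assembled at all. If $\min(v)<\min(y)$ (hence $\max(y)>\max(v)$), the splice works: cut at $\max(v)$, use the $G^{-x+y}$ strategy on the left (it never needs $y$, which lies in the right part, by the remark after Lemma~\ref{lem:split}) and the $G^{-v+x}$ strategy on the right. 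But if $\min(y)<\min(v)$ (hence $\max(x)<\max(y)<\max(v)$), the left-hand strategy extracted from $G^{-x+y}$ may genuinely depend on Dominator's claimed vertex $y$, which lies in the left part and which she has \emph{not} claimed in $G^{-x+v}$; your composite strategy is not defined there. The paper's resolution of this case is indirect: it splices $G^{-x+y}$ (left of $\max(y)$) with $G^{-v+x}$ (right of $\max(y)$) to show that Dominator wins from $G^{-v+y}$, contradicting the choice of $x$ as the adjacent reply to $v$ with largest right endpoint, so this configuration simply cannot occur. Your remark that the maximality of $\max(x)$ ``constrains the structure of $y$'' gestures at this, but making it precise requires exactly that construction, which is the heart of the lemma and is missing from the proposal.
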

\begin{proof}
Let us assume on the contrary that Dominator does not have a winning strategy from position $G^{-x+v}$. Hence, we have $\min(x)<\min(v)$. 
Since Dominator has a winning strategy in $G$ playing second, by Lemma \ref{LemmaAdjacencyPlay}, there exists an interval $f\in N(x)$ such that there exists a winning strategy for Dominator from position $G^{-x+f}$. Among these intervals, we choose $f$ with the largest endpoint. By the minimality of $\min(v)$, and since $\min(x)<\min(v)$, we have $\max(f)>\max(x)$ and $f$ is not included in any other interval.
Thus none of the three intervals $v$, $x$ or $f$ can be included one in another. We have two cases to consider.

Let us first consider the case where $\min(x)<\min(f)<\min(v)<\max(x)<\max(f)<\max(v)$. We construct a contradiction by showing that there is a winning strategy for Dominator from position $G^{-v+f}$, this is against the maximality of $x$. 
We partition $G$ according to $\max(f)$. Let $V_1=\{w\in V(G)\mid \max(w)< \max(f)\}$ and $V_2=\{w\in V(G)\mid  \max(w)\geq\max(f)\}$. 
As before, since $f$ is not included in any interval, one can check that $f$ intersects all the intervals of $V'_1\cup V'_2$ where $V'_1$ and $V'_2$ are defined as previously.
Since Dominator has a winning strategy playing second from position $G^{-x+f}$, by Lemma \ref{lem:split}, she has a strategy $\St_1$ to dominate $V_1\setminus V'_1$ using intervals in $V_1\setminus \{x\}$. Since she has a winning strategy playing second from position $G^{-v+x}$, by Lemma \ref{lem:split}, she has a strategy $\St_2$ to dominate $V_2\setminus V'_2$ using intervals in $V_2\setminus \{v\}$.
By Lemma \ref{lem:union}, using $\St_1$, $\St_2$ and since $f$ dominates all the vertices in $V'_1\cup V'_2$, Dominator has a winning strategy from position $G^{-v+f}$, contradicting the maximality of $x$. 

We now consider the case where $\min(x)<\min(v)<\min(f)<\max(x)<\max(v)<\max(f)$. We will construct the contradiction by showing that there is a winning strategy for Dominator from position $G^{-x+v}$. Let us partition the graph $G$ using interval $v$.  Let $V_1=\{w\in V(G)\mid \max(w)< \max(v)\}$ and $V_2=\{w\in V(G)\mid  \max(w)\geq\max(v)\}$. 
As before, since $v$ is not included in any interval, $v$ dominates all the intervals in $V'_1\cup V'_2$.
Using Lemma \ref{lem:split}, we define $\St_1$  from position $G^{-x+f}$ that will dominate $V_1\setminus V'_1$ using vertices of $V_1\setminus \{x\}$ and $\St_2$ from position $G^{-v+x}$ that will dominate $V_2\setminus V'_2$ using vertices of $V_2\setminus \{v\}$. Then using Lemma \ref{lem:union}, we obtain a strategy for Dominator from $G^{-x+v}$.

Together these cases cover all the possibilities for $f$ and hence, there is a winning strategy from position $G^{-x+v}$.
\end{proof}

We continue by studying the intersection of intervals $x$ and $v$.

\begin{lemma}\label{LemmavxIntersection}
The intersection of intervals $x$ and $v$ is not included in any other interval.
\end{lemma}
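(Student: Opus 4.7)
The plan is to argue by contradiction, so suppose some interval $t \neq v, x$ contains the intersection $[\min(v), \max(x)]$ of $x$ and $v$. First I would rule out the easy configurations: since $v$ is not contained in any other interval, $t$ cannot contain $v$, forcing $\max(t) < \max(v)$; and since $x$ is not contained in any interval different from $v$, $t$ cannot contain $x$, forcing $\min(t) > \min(x)$. Combined with $t \supseteq [\min(v), \max(x)]$, this forces
\[\min(x) < \min(t) \leq \min(v) \leq \max(x) \leq \max(t) < \max(v),\]
and in particular $\max(t) > \max(x)$. Among all intervals containing the intersection, I would then fix $t$ to maximize $\max(t)$; this automatically ensures that $t$ itself is not contained in any other interval, since any such container would also contain the intersection and end strictly after $\max(t)$.

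The core step is to prove that Dominator wins from the position $G^{-v+t}$: because $\max(t) > \max(x)$, this would directly contradict the choice of $x$ as a response to $v$ with maximum ending point. To this end I would partition the vertex set at $\max(t)$, letting $V_1 = \{w : \max(w) \leq \max(t)\}$ and $V_2 = \{w : \max(w) > \max(t)\}$, so that $t, x \in V_1$ while $v \in V_2$. The key geometric observation is that $t$ dominates every vertex of $V_1' \cup V_2'$. Indeed, for $w \in V_2'$ one must have $\min(w) \leq \max(t) < \max(w)$, so $w$ contains the point $\max(t) \in t$; and for $w \in V_1'$, if $w$ were not adjacent to $t$ then necessarily $\max(w) < \min(t)$, and any witness $w' \in V_2$ adjacent to $w$ would satisfy $\min(w') \leq \max(w) < \min(t)$ and $\max(w') > \max(t)$, making $w'$ strictly contain $t$ and contradicting the maximality of $\max(t)$.

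Finally I would apply Lemmas~\ref{lem:split} and~\ref{lem:union} at the position $(G, \{t\}, \{v\})$, using the two winning positions at our disposal: $G^{-v+x}$, winning by definition of $x$, and $G^{-x+v}$, winning by Lemma~\ref{LemmavxPair}. From $G^{-v+x}$, Lemma~\ref{lem:split} produces a strategy dominating $V_2 \setminus V_2'$ using only unclaimed vertices of $V_2$; since both $x$ and $t$ lie in $V_1$, the unclaimed vertices of $V_2$ coincide in $G^{-v+x}$ and in $G^{-v+t}$, and this strategy transports cleanly. From $G^{-x+v}$, Lemma~\ref{lem:split} produces a strategy dominating $V_1 \setminus V_1'$ using unclaimed vertices of $V_1$; combining via Lemma~\ref{lem:union} will then yield the sought winning strategy. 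The main obstacle is precisely this last transport: in $G^{-x+v}$ the claimed vertex of $V_1$ is $x$ (belonging to Staller), whereas in $G^{-v+t}$ it is $t$ (belonging to Dominator), so the extracted strategy may wish to play $t$, which is no longer available. The expected way around this is a swap argument between $x$ and $t$, exploiting the fact that any prescribed $t$-move becomes redundant in $G^{-v+t}$ (the vertex $t$ is already a Dominator's claim) and can therefore be replaced by $x$, which dominates all neighbors of $t$ lying in $V_1 \setminus V_1'$ by the geometric constraints derived above.
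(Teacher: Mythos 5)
Your proof is correct and follows essentially the same route as the paper's: pick a containing interval $t$ (the paper's $f$) that is itself not contained in any other interval, partition the graph at $\max(t)$, extract the two sub-strategies from the winning positions $G^{-x+v}$ and $G^{-v+x}$ via Lemma~\ref{lem:split}, and merge them with Lemma~\ref{lem:union} to win from $G^{-v+t}$, contradicting the maximality of $\max(x)$. The ``transport obstacle'' you flag is indeed resolved exactly as you suggest (a move prescribed on a vertex Dominator already owns is free), and the paper silently relies on the same standard fact.
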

\begin{proof} 
Let us assume on the contrary that there exists an interval $f$ such that $\min(f)<\min(v)$ and $\max(f)>\max(x)$. Since $v$ nor $x$ are included in any other intervals and since $\min(x)<\min(v)$, we have $\min(x)<\min(f)<\min(v)<\max(x)<\max(f)<\max(v)$.  Moreover, we may assume that $f$ is not included in any interval.

We construct a contradiction by showing that there is a winning strategy for Dominator from position $G^{-v+f}$, this is against the maximality of $x$. 
As before, we partition the intervals using $f$: Let $V_1=\{w\in V(G)\mid \max(w)< \max(f)\}$ and $V_2=\{w\in V(G)\mid  \max(w)\geq\max(f)\}$. 
As before, since $f$ is not included in any interval, $f$ dominates all the intervals in $V'_1\cup V'_2$. Dominator wins from positions $G^{-v+x}$ and $G^{-x+v}$ by our assumption on $v$ and $x$ and by Lemma \ref{LemmavxPair}.
Using Lemma \ref{lem:split}, we define $\St_1$  from position $G^{-x+v}$ that will dominate $V_1\setminus V'_1$ using vertices of $V_1\setminus \{x\}$ and $\St_2$ from position $G^{-v+x}$ that will dominate $V_2\setminus V'_2$ using vertices of $V_2\setminus \{v\}$. Then using Lemma \ref{lem:union}, we obtain a strategy for Dominator from $G^{-v+f}$.
%
\end{proof}

We finally prove that $v$ and $x$ are twins, i.e. they have the same closed neighborhoods.

\begin{lemma}\label{LemmavxNeighborhood}
We have $N_G[v]=N_G[x]$.
\end{lemma}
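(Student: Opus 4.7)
The plan is to argue by contradiction, closely following the partition/merge template that drove the proofs of Lemmas~\ref{LemmavxPair} and~\ref{LemmavxIntersection}. Assume that $N_G[v]\neq N_G[x]$. Since both closed neighborhoods contain $v$ and $x$, there is some $w\neq v,x$ lying in the symmetric difference $N_G[v]\triangle N_G[x]$. I will split on whether $w\in N_G(v)\setminus N_G(x)$ or $w\in N_G(x)\setminus N_G(v)$ and derive a contradiction in each case.

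Consider first the case $w\in N_G(v)\setminus N_G(x)$. Because $w$ intersects $v$ but not $x$, and using the inequalities $\min(x)<\min(v)<\max(x)<\max(v)$, the only possibility is $\max(x)<\min(w)\leq\max(v)$, and in particular $\max(w)>\max(x)$. As in the proofs of the two preceding lemmas, I may assume $w$ is not contained in any other interval (replacing it by an outermost container if necessary, using Lemma~\ref{LemmavxIntersection} to rule out containment sneaking past the intersection of $v$ and $x$). I then partition $V(G)$ at $\max(w)$ into $V_1=\{z:\max(z)<\max(w)\}$ and $V_2=\{z:\max(z)\geq\max(w)\}$, with the associated boundary sets $V'_1,V'_2$. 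The non-inclusion of $w$ forces $w$ to dominate every interval of $V'_1\cup V'_2$. By Lemma~\ref{LemmavxPair} Dominator has winning strategies from both $G^{-x+v}$ and $G^{-v+x}$, and Lemma~\ref{lem:split} extracts restricted strategies $\St_1,\St_2$ dominating $V_1\setminus V'_1$ and $V_2\setminus V'_2$ using only unclaimed vertices of $V_1\setminus\{x\}$ and $V_2\setminus\{v\}$ respectively. Lemma~\ref{lem:union} then combines them into a winning strategy for Dominator from $G^{-v+w}$. If $\max(w)<\max(v)$, this contradicts the maximality of $\max(x)$ among winning answers of Dominator in $N(v)$; if instead $\max(w)\geq\max(v)$, it contradicts the defining property of $v$ itself, which forbade any adjacent winning answer $y$ with $\max(y)\geq\max(v)$.

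The second case $w\in N_G(x)\setminus N_G(v)$ forces $\max(w)<\min(v)$ and therefore $\min(w)<\min(v)$. I handle it by the symmetric partition/merge argument, substituting Corollary~\ref{Corollary u' is last} (the right-endpoint counterpart of the structural setup) for the left-endpoint versions and using the minimality of $\min(v)$ in place of the maximality of $\max(x)$. Concretely, an analogous partition (now separating $w$ from $v$ and $x$) together with the winning strategies from $G^{-x+v}$ and $G^{-v+x}$ yields a winning Dominator position that violates the minimality in the choice of $v$.

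The main obstacle is justifying the strategy-transfer step of Lemma~\ref{lem:union}: the restricted strategies $\St_1,\St_2$ were extracted from distinct initial positions ($G^{-x+v}$ and $G^{-v+x}$) yet must be played simultaneously in the new position $G^{-v+w}$. This succeeds because each strategy operates exclusively within one half of the partition, so the precise identity of the initial Dominator move is irrelevant as long as it dominates the interface $V'_1\cup V'_2$, which is exactly what $w$ does. A secondary delicate point is the second subcase of Case~1, where $v$ itself lies in $V_1$ because $\max(w)\geq\max(v)$; the merging lemma still applies, but the ultimate contradiction is with the global property defining $v$ rather than with the local optimality of $x$.
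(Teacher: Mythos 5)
Your overall route is genuinely different from the paper's and, as written, it does not close. The paper never contradicts the extremal choices of $v$ and $x$; instead it shrinks an endpoint of $v$ (resp.\ $x$) to obtain a proper spanning subgraph $G'$ that still has no PDS, shows Dominator wins $G'$ by partitioning at $\max(x)$ and pairing $v$ with $x$, and then invokes the \emph{edge}-minimality of the counterexample to force $G=G'$. You never use the minimality of $G$, and the two places where you substitute the extremal properties of $v$ and $x$ for it are exactly where the argument breaks. First, in Case 1 with $\max(w)\geq\max(v)$, the interval $v$ lies in $V_1$, so the strategy $\St_1$ extracted from $G^{-x+v}$ via Lemma~\ref{lem:split} is only guaranteed to dominate $V_1\setminus V_1'$ using unclaimed vertices of $V_1$ \emph{together with} the already-claimed Dominator vertex $v$: the remark after Lemma~\ref{lem:split} only lets you discard Dominator vertices lying in the other part. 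In the target position $G^{-v+w}$ the vertex $v$ belongs to Staller, so $\St_1$ may fail on interior vertices of $V_1$ (for instance neighbors $y$ of $v$ with $\max(y)<\min(w)$) that it was counting on $v$ to cover. Your justification that ``the precise identity of the initial Dominator move is irrelevant as long as it dominates the interface'' is false precisely here, because $v$ is not an interface vertex of the source decomposition --- it sits inside the part being dominated. (Your subcase $\max(w)<\max(v)$ is fine and parallels the first case of Lemma~\ref{LemmavxPair}.)

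Second, Case 2 is not an argument. The ``minimality of $\min(v)$'' says that every vertex starting before $v$ admits \emph{some} rightward winning answer; this is an existential guarantee, and exhibiting one more winning position for Dominator cannot contradict it --- refuting it would require showing that some earlier-starting vertex has \emph{no} rightward winning answer, which the partition/merge template does not produce. Nor is there any extremal property of $x$ on the left to play against ($x$ was chosen to maximize its right endpoint among winning answers to $v$, nothing more). So the inclusion $N[x]\subseteq N[v]$ is simply not established. The missing idea is the one the paper uses: since $G$ is also edge-minimal, it suffices to exhibit a winning strategy for Dominator in the spanning subgraph obtained by deleting the offending adjacencies of $v$ (and symmetrically of $x$ on the left); that strategy is built by partitioning at $\max(x)$, letting Dominator pair $v$ with $x$, and using Lemma~\ref{LemmavxIntersection} to verify that every interface interval meets both $v$ and $x$.
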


\begin{proof} We first prove that no intervals start or end between $\max(x)$ and $\max(v)$. With similar arguments we could also prove that no intervals start or end between $\min(x)$ and $\min(v)$. Together, these prove that $N[v]=N[x]$.
 Let $G'$ be the graph obtained from $G$ by decreasing the value of $\max(v)$ until no interval starts between $\max(v)$ and $\max(x)$.  
 The graph $G'$, as a spanning subgraph of $G$, has no PDS. We will prove that Dominator wins in $G'$, thus by minimality of $G$, we will have $G=G'$, proving no intervals start or end between $\max(x)$ and $\max(v)$. 

We partition the graph according to $\max(x)$ but this time, $x$ is in $V_1$.
Let $V_1=\{w\in V(G)\mid \max(w)\leq \max(x)\}$ and $V_2=\{w\in V(G)\mid  \max(w)>\max(x)\}$. 
Using Lemma \ref{lem:split}, we define strategy $\St_1$  from position $G^{-x+v}$ that will dominate $V_1\setminus V'_1$ using vertices of $V_1\setminus \{x\}$ and strategy $\St_2$ from position $G^{-v+x}$ that will dominate $V_2\setminus V'_2$ using vertices of $V_2\setminus \{v\}$. 
Let $t\in V'_1$ be such that it intersects $z\in V'_2$. Then $\min(z)<\max(x)$ and thus $z$ contains $\max(x)$ and intersects both $v$ and $x$.  We must also have $\max(t)>\min(z)$. By Lemma \ref{LemmavxIntersection}, $z$ does not contain the intersection of $v$ and $x$ (if $z\neq v$). Hence, we must have $\min(z)\geq\min(v)$ where equality holds if and only if $z=v$. Together, it gives $\max(t)>\min(v)$ and thus $t$ intersects $v$ and $x$.
This means that both intervals $v$ and $x$ are dominated by all the intervals in $V'_1\cup V'_2$.
We now use Lemma \ref{lem:union} together with strategies $\St_1$, $\St_2$ and Dominator pairing $x$ with $v$ (i.e. she claims $v$ when Staller claims $x$ and vice versa), to obtain a strategy for Dominator from $G$. Note that in this strategy, the vertices of $V_2\setminus V'_2$ are dominated by vertices in $V_2\setminus \{v\}$. Hence this strategy is also a winning strategy for Dominator in $G'$ since the edges that have been removed are between $v$ and vertices in $V_2\setminus V'_2$. 

The proof that no  intervals start or end between $\min(x)$ and $\min(v)$ follows with symmetric arguments. Although there are two cases, one when $\min(x)<\min(v)$ and one when $\min(v)<\min(x)$. Otherwise, the proof is symmetric to the case with $\max(x)$ and $\max(v)$. Together these show that $N[x]=N[v]$.
\end{proof}

Using these lemmas, we are now ready to prove the main result of this subsection.

\begin{proof}[Proof of Theorem \ref{thm:eqint}]
Let $G$ be the minimal counterexample and $u$, $u'$, $v$ and $x$ be as we have previously defined them. 
By Lemma \ref{LemmavxNeighborhood}, we have $N_G[v]=N_G[x]$. Recall that neither $v$ nor $x$ can be included in any  intervals. 

Let us first consider the case where  $v=u'$ or $x=u$. Let us assume that $x=u$, the case with $v=u'$ is similar. 
Recall that, due to Lemma \ref{LemmaU is first}, we have a pairing dominating set $S_u$ in $G_u=G-u$. 
The vertex $v$ is pair-covered by some pair $(y_1,y_2)$. Since $u$ and $v$ are twins, $(y_1,y_2)$ is also pair-covering $u$, a contradiction.


Thus, we may assume that $x\not\in \{u,u'\}$ and $v\not\in\{u,u'\}$. Let $G_L$ be the graph induced by the vertices $w$ with $\max(w)\leq  \max(v)$ and $G_R$ be the graph induced by the vertices $w$ with $\min(w)\geq  \min(x)$.

Then Dominator has a winning strategy in $G_L$ and in $G_R$ playing second. Indeed, by symmetry, we only prove it for $G_L$.
Using Lemma \ref{superlemma}, we just need to prove that $(G_L,\{x\},\{v\})$ has outcome $\D$.
Let $V_1=V(G_L)$ and $V_2=V\setminus V_1$, i.e. $V_2$ contains the vertices that are finishing strictly after $\max(v)$. The vertices $V'_1$ that are in $V_1$ and adjacent to vertices in $V_2$ might intersect $v$ and $x$. We apply Lemma \ref{lem:split} with partition $(V_1,V_2)$ on $(G,\{x\},\{v\})$ which has outcome $D$ by Lemma \ref{superlemma}.
Then Dominator has a strategy to dominate $V_1\setminus V'_1$ using vertices of $V_1$. Thus she has a strategy to dominate all the vertices in $(G_L,\{x\},\{v\})$ since $x$ will dominate all $V'_1$.


 
 Moreover, we have $|V(G_L)|<|V(G)|$ and $|V(G_R)|<|V(G)|$ since $u\notin V(G_R)$ and $u'\notin V(G_L)$. Thus, by minimality of $G$, there is a pairing dominating set $S_L'$ for $G_L$ and $S_R'$ for $G_R$. 
Let $S_L$ be the pairs of $S'_L$ such that both intervals end before $\max(x)$ and $S_R$ be the pairs of $S'_R$ such that both intervals start after $\min(v)$.
We prove that $S=S_L\cup S_R\cup\{(v,x)\}$ is a pairing dominating set of the graph $G$, which leads to a contradiction.
Let $t$ be a interval. If $t\in N[x]=N[v]$ it is pairing dominated by $(v,x)$. By symmetry, we can assume that $\max(t)<\min(x)$. Interval $t$ must be pairing dominated by a pair $(w,z)$ in $S'_L$. Since $w$ and $z$ intersect $t$, they start before $\min(x)$. Since $x$ is not included in any interval, they must end before $\max(x)$ and thus $(w,z)\in S_L$ and is pairing dominated by $S$.
Thus $S$ is a pairing dominating set, a contradiction.
\end{proof}

An immediate consequence of Theorem \ref{thm:eqint} is that it gives a polynomial certificate for \PBoutcomeD in interval graphs.

\begin{corollary}
    \PBoutcomeD is in {\sf NP} when restricted to interval graphs.
\end{corollary}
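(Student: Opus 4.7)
The plan is to use Theorem~\ref{thm:eqint} as the central tool: since on interval graphs having outcome $\D$ is equivalent to admitting a pairing dominating set, a PDS serves as a natural polynomial certificate for the \PBoutcomeD problem. I would first note that a pairing dominating set of a graph on $n$ vertices involves at most $n/2$ pairs, so writing it down takes only polynomial space in the input size. Thus the size of the certificate is not an issue.

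The second step is to argue that verification is polynomial. Given a candidate set $S = \{(u_1,v_1),\dots,(u_k,v_k)\}$, one checks in time $O(n)$ that the vertices appearing in $S$ are pairwise distinct (so that $S$ is a valid set of pairs), and then for each vertex $w \in V(G)$ and each pair $(u_i,v_i) \in S$ one checks whether $w \in N[u_i] \cap N[v_i]$, stopping as soon as such a pair is found. This takes time $O(n^2)$ overall, which is polynomial in the size of $G$.

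Finally, to conclude, I would invoke Theorem~\ref{thm:eqint}: if $o(G)=\D$, then $G$ admits a pairing dominating set $S$, which is a valid certificate accepted by the above verifier; conversely, if such an $S$ exists and passes the verifier, then by the easy direction of the equivalence (any graph with a PDS has outcome $\D$, as recalled earlier in the paper), we have $o(G) = \D$. Hence \PBoutcomeD restricted to interval graphs belongs to $\mathsf{NP}$. There is no real obstacle here — the whole content is in Theorem~\ref{thm:eqint}; the corollary is just a repackaging of that equivalence as an $\mathsf{NP}$ membership statement, so the proof is essentially a two-line verification argument.
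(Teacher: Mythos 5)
Your argument is correct and is exactly the paper's intended reasoning: the paper states the corollary as an immediate consequence of Theorem~\ref{thm:eqint}, with the pairing dominating set serving as the polynomial-size, polynomially verifiable certificate. You have simply spelled out the certificate-and-verifier details that the paper leaves implicit.
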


\subsection{Computing PDS in interval graphs}\label{subsec:pdsinterval}

In this section, we discuss about the complexity of \PBPDS (i.e. deciding if a graph has a PDS) for interval graphs (which is equivalent to the complexity of \PBoutcomeD for this class of graphs by Theorem \ref{thm:eqint}). \PBPDS is {\sf NP}-complete, even if the pairs are adjacent \cite{MBdomgame} over all graphs. We did not manage to find a polynomial-time algorithm for \PBPDS in interval graphs nor to prove that it is {\sf NP}-complete. 
However, we present some positive algorithmic results for \PBPDS in interval graphs. Thanks to Propositions \ref{prop:MSO} and  \ref{prop:adjacent}, since \PBPDS can be expressed using a $MSO_2$ formula in interval graphs, it is $\mathsf{FPT}$ parameterized by the treewidth or equivalently in interval graphs by the clique number \cite{courcellemso}.

In the following, we extend this result to a less restrictive parameter defined as follows. A representation of an interval graph is {\em $k$-nested} if there are no chains of $k+1$ intervals included in each other. Let $G$ be an interval graph. We denote by $\nu(G)$ the smallest $k$ such that $G$ has a $k$-nested representation.
This parameter seems to have first appeared in \cite{nested-intro} under the name {\em depth}. It was formally defined and investigated in \cite{nested} where, in particular, it is proved that $\nu(G)$ can be computed in linear time on the number of vertices and edges. 
We always have $\nu(G)\leq \omega(G)$ where $\omega(G)$ is the clique number of $G$ (since nested intervals induce a clique).
Value $\nu(G)$ is also always smaller than or equal to the minimum number $\lambda(G)$ of different lengths of intervals one needs for representing $G$. Indeed, nested intervals must have different lengths. 

In \cite{nested-FO}, it is proved that checking an FO-formula for a $k$-nested intervals is FPT in $k$ and the length of the formula.  However, concerning the problem \PBPDS in interval graphs, it can only be expressed using MSO-formulas thus we cannot derive an FPT algorithm from the formula. 
We prove that \PBPDS is in {\sf XP} parameterized by $\nu(G)$.

\begin{theorem}\label{thm:xp}
    \PBPDS can be computed in time $O(n^{\nu(G)+3})$ in interval graphs. 
\end{theorem}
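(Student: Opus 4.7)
The approach is dynamic programming on a $k$-nested interval representation of $G$, where $k = \nu(G)$. By Proposition \ref{prop:adjacent} and Proposition \ref{prop:PDSasfactors}, it suffices to decide whether $V(G)$ can be partitioned into clusters, each consisting of two adjacent intervals (the ``pair'') together with a possibly empty set of intervals adjacent to both members of the pair (the ``leaves'').

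The plan is to fix a $k$-nested representation of $G$ and sweep the real line from left to right, processing the $O(n)$ endpoints in order. At each sweep step, the DP state records enough information about the partial pairing to extend it correctly. The crucial structural observation is: at any sweep position, the set of active intervals whose pairing role is still ``undecided'' (i.e., either declared as one end of a pair whose partner has not yet been fixed, or not yet assigned to any cluster) can be assumed, without loss of generality, to form a nested chain, and by the $k$-nested hypothesis this chain has length at most $k+1$. This should be proved using an exchange argument in the spirit of the proof of Theorem \ref{thm:eqint}: whenever two non-nested undecided intervals appear simultaneously at a given sweep position, their pending roles can be locally swapped to canonicalize the configuration into a nested one, preserving the feasibility of the partial PDS.

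Given this invariant, the DP state at a given sweep position is described by a tuple of at most $k+1$ interval indices (together with $O(1)$ auxiliary flags indicating each slot's pairing role), giving $O(n^{k+1})$ states per sweep position. Summing over $O(n)$ sweep positions yields $O(n^{k+2})$ DP entries, and each transition (corresponding to processing the next endpoint and enumerating the possible decisions: matching with a pending pair member, declaring a new pending pair, or assigning as a leaf to an open cluster) costs $O(n)$. The total running time is therefore $O(n^{k+3})$, and the algorithm accepts if and only if there exists a sequence of transitions that terminates with no intervals left undecided, which by construction corresponds to an adjacent pairing dominating set of $G$.

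The main obstacle is rigorously establishing the nested-chain invariant on undecided intervals in the $k$-nested setting. The argument should adapt the exchange techniques developed in Lemmas \ref{LemmavxPair}--\ref{LemmavxNeighborhood}, showing that whenever two non-nested intervals are simultaneously undecided at a common sweep position, one can either immediately match one of them, or swap future partner assignments to reduce to a nested pending configuration, without altering the feasibility of completing the PDS. Making this argument compatible with a single left-to-right sweep, and verifying that $O(1)$ auxiliary flags per slot are indeed enough to recover all needed information, constitutes the technical heart of the proof.
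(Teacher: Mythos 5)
Your high-level plan (left-to-right sweep DP over a $k$-nested representation, with a canonicalization lemma bounding the DP state to $O(n^{k+1})$ per sweep position) matches the paper's, but the key invariant you rely on is both unproven and, as stated, almost certainly wrong. You claim that the set of ``undecided'' intervals at a sweep position can be canonicalized into a nested chain, hence has at most $k+1$ elements (in fact a nested chain in a $k$-nested representation has at most $k$ elements). But the set of intervals that must be paired with partners starting later than the current sweep position need not have bounded size: two non-nested intervals $x,x'$ with pending partners $v,w$ (both starting after the sweep position) cannot always be re-paired as $(x,x')$ and $(v,w)$, because a vertex pair-covered by $(x',w)$ may lie outside both $x\cap x'$ and $v\cap w$ (e.g., when $\min(v)>\min(w)$ or when $x'\cap w$ extends past $\max(x)$). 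The paper's Lemma~\ref{lem:nicePDS} confronts exactly these failure cases and derives a \emph{weaker} canonical form: after partitioning the representation into $k$ proper chains via Dilworth's theorem, the pending intervals within each chain form a \emph{suffix} (in start-order) of the intervals containing the current date. The pending set can still be large, but it is describable by $k$ indices, one per chain — that is what yields the $O(n^{k+1})$ state space, not a size bound on the pending set. Since you explicitly defer this invariant as ``the technical heart of the proof'' and the natural exchange argument does not deliver it, this is a genuine gap rather than a different route.

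A secondary omission: your state does not record how far to the right the already-completed pairs continue to pair-cover vertices. When an interval ends at the sweep position, one must certify that some completed pair $(u_i,v_i)$ has $u_i\cap v_i$ reaching past its left endpoint; the paper keeps a coordinate $t=\max_i \max(u_i\cap v_i)$ in the profile for precisely this check (condition (R4)). Your ``assign as a leaf to an open cluster'' transition does not cover leaves whose covering pair was closed before the leaf's right endpoint is processed, so even granting your invariant, the state as described is insufficient.
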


To prove this theorem, we use an important property of $k$-nested representations that is that they can be partitioned into $k$ proper interval representations \cite{nested}. Furthermore, the partitioning can be done in polynomial time. Indeed, consider applying Dilworth's Theorem \cite{dilworth} on the partial order ``$x<y$" if and only if $\min(x)<\min(y)$ and $\max(x)<\max(y)$. Indeed, an antichain in this partial order is exactly a set of  intervals contained in each other whereas the intervals in a chain form a proper interval representation. Finding such a partition can be done in $O(kn)$ \cite{Decomposition}.

We will use such a partition to provide a dynamic programming algorithm. Before going into the algorithm, we need to have a special representation of $G$ and define some notations.

Let $G$ be an interval graph on $n$ vertices. Let $k=\nu(G)$. Consider a $k$-nested representation $\mathcal R$ of $G$. We choose $\mathcal R$ so that no intervals have a common extremity and $\min(u)<\max(u)$ for each interval (this is always possible by eventually extending some intervals). Let $\mathcal X=\{X_1,\dots ,X_{k}\}$ be the partition of the intervals of $\mathcal R$ such that each part is a proper representation, i.e no intervals inside $X_i$ are included one in another. For each $X_i$, we order the intervals by their starting (or equivalently finishing) date and numerate them following order: $x_{i,1}<\dots <x_{i,n_i}$, which we call \textit{start-order}.

Let $t_1<\cdots<t_{2n}$ be the $2n$ extremities of all the intervals. Let $d_0,\dots,d_{2n}$ be real numbers such that $d_0<t_1<d_1<\cdots<d_{2n-1}<t_{2n}<d_{2n}$. Note that no intervals contain $d_0$ nor $d_{2n}$.

Let $S$ be an adjacent pairing dominating set of $G$. 
 We say that {\em $u$ is paired  after date $d$} if there exists an interval $v$ such that $(u,v)\in S$ and $d<\min(v)$.
Let $j\in \{1,\dots,2n-1\}$ and $i\in\{1,\dots,k\}$. 
We denote by $X(S,i,j)$ the intervals of $X_i$ that contain $d_j$ and are paired after $d_j$.

We say that $S$ is a {\em nice PDS related to $\mathcal X$} if it is an ADPS and for all $i,j$, either $X(S,i,j)$ is empty or is a set of consecutive intervals (with respect to the start-order) that contains the last interval of $X_i$ that contains $d_j$. In other words, if an interval is paired after $d_j$ then all the intervals after it that contain $d_j$ are also paired after $d_j$. Note that the definition of being nice depends on the representation $\mathcal R$ and the partition $\mathcal X$ of the intervals. Since the representation and the partition will not change through the proof, we will just use the term ``nice PDS".


\begin{lemma}\label{lem:nicePDS}
Graph $G$ admits an APDS of minimum size $d$ if and only if it admits a nice PDS of minimum size $d$. 
\end{lemma}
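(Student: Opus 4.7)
The ``if'' direction is immediate: a nice PDS is by definition an APDS, so if there exists a nice PDS of size $d$, there exists an APDS of size $d$, hence the minimum size of an APDS is at most $d$. For the ``only if'' direction, I will show that every minimum APDS can be transformed into a nice APDS of the same size via an exchange argument. The plan is to fix a minimum APDS $S$ that additionally minimises the potential function $\Phi(S) = \sum_{(u,v)\in S} |\min(u)-\min(v)|$ measuring the total horizontal spread of the pairs, and to show that such an $S$ must already be nice.

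Suppose for contradiction that $S$ is not nice. Then there exist indices $i,j$ and intervals $a, b \in X_i$ both containing $d_j$ with $\min(a) < \min(b)$, such that $a$ is paired in $S$ with some $c$ satisfying $\min(c) > d_j$, while $b$ is either unpaired or paired with some $b'$ with $\min(b') \leq d_j$. If $b$ is unpaired, I replace the pair $(a, c)$ by $(b, c)$. Adjacency $b \sim c$ follows from $\max(a) < \max(b)$ (since $X_i$ is a proper representation) combined with $\min(c) < \max(a)$ coming from $a \sim c$. Coverage is preserved because any $w$ adjacent to both $a$ and $c$ satisfies $\min(w) < \max(a) < \max(b)$ and $\max(w) > \min(c) > d_j > \min(b)$, so $w \sim b$ and $(b,c)$ covers $w$. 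The new set has the same size but $\Phi$ strictly decreases by $\min(b) - \min(a) > 0$, contradicting the choice of $S$.

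If $b$ is paired with some $b'$ with $\min(b') \leq d_j$, I attempt the double swap $(a,c), (b,b') \to (a,b'), (b,c)$. Routine checks give $a \sim b'$ (using $\max(b') > \min(b) > \min(a)$ and $\min(b') < d_j < \max(a)$) and $b \sim c$, and the coverage from $(a,c)$ is absorbed by $(b,c)$ as in the previous case. A computation shows $\Phi$ strictly decreases by $2(\min(b)-\min(b'))$ or $2(\min(b)-\min(a))$ depending on the relative order of $\min(a)$ and $\min(b')$, both positive. Coverage of $(b,b')$ is usually absorbed by the new pairs; the delicate sub-case is when there exists $w\in N[b]\cap N[b']$ with $w \not\sim a$ and $w \not\sim c$, which forces $\min(w) \geq \max(a) > d_j$ and $\max(w) \leq \min(c)$. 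In this situation I perform a more elaborate rearrangement—replacing $(b,b')$ by $(b',w)$ so that $w$ is covered directly via $w\sim b'$ while $(b,c)$ still dominates everything else needed—or, when such $w$ actually admits a pair-partner beyond $c$, even dropping down to a single pair $(b,c)$ and contradicting the size-minimality of $|S|$.

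The main obstacle is precisely this Case~2: naive double swaps can fail to cover vertices ``trapped between'' $a$ and $c$, and handling these failures requires a careful case analysis combining further exchanges with appeals to the minimality of $S$ and to structural properties of interval graphs (in particular that $b'$ itself must then contain $d_j$, which restricts the partition component where $b'$ can live and enables the alternative re-pairing). In all sub-cases, we obtain a new APDS of the same size with strictly smaller $\Phi$, contradicting the minimality of $\Phi(S)$; hence $S$ must be nice, which gives a nice PDS of size $d$.
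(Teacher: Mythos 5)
Your overall strategy (take a minimum APDS, impose an extremal side-condition, and derive niceness by local exchanges) is the same as the paper's, and your first case ($b$ unpaired, replace $(a,c)$ by $(b,c)$) is correct. But the second case, where $b$ is paired with some $b'$ starting before $d_j$, contains two genuine gaps. First, your potential $\Phi(S)=\sum_{(u,v)\in S}|\min(u)-\min(v)|$ does \emph{not} strictly decrease under the double swap when $\min(a)<\min(b)<\min(b')<d_j$ (which is a perfectly possible configuration): writing $\alpha=\min(a)$, $\beta=\min(b)$, $\gamma=\min(b')$, $\delta=\min(c)$, the change is $(\delta-\alpha)+(\gamma-\beta)-\bigl((\gamma-\alpha)+(\delta-\beta)\bigr)=0$, so your claimed decrease of $2(\min(b)-\min(b'))$ is not positive here and the extremal argument stalls. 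Second, the coverage repair for the ``trapped'' vertex $w\in N[b]\cap N[b']$ with $w\not\sim a$, $w\not\sim c$ is asserted rather than proved: $w$ may already occur in another pair of $S$, so the pair $(b',w)$ need not be available; the other vertices pair-covered by $(b,b')$ are not shown to remain covered after you replace $(b,b')$ by $(b',w)$; and since $\min(w)>\max(a)>d_j>\min(b')$, this re-pairing typically \emph{increases} $\Phi$, so even where it restores coverage it destroys the termination argument. As written, the ``only if'' direction is not established.

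For comparison, the paper avoids both problems by choosing the opposite extremal condition — among minimum APDSs, maximize $\sum_{(u,u')\in S}|u\cap u'|$ — and by resolving the ``both paired'' case entirely through the minimality of $|S|$ rather than through a double swap. The key observation is the containment $a\cap c=[\min(c),\max(a)]\subseteq a\cap b$ (since $\min(b)<d_j<\min(c)$ and $\max(a)<\max(b)$), combined with the Helly property of intervals: any interval meeting two intersecting intervals meets their intersection. From this one shows that either $\max(c)\le\max(b')$ (then $(a,c)$ is redundant), or both intersections $a\cap c$ and $b\cap b'$ lie inside $a\cap b$ (then the two pairs merge into the single pair $(a,b)$), or $a\cap c\subseteq b\cap b'$ (then $(a,c)$ is again redundant) — each contradicting the minimality of $|S|$. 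No re-pairing with a third vertex is ever needed, which is precisely the step your proposal leaves open.
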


\begin{proof}
Recall that a nice PDS is also an APDS. Hence, the \textit{only if} part follows immediately.

Let us then consider the other direction. Take an APDS $S$ of $G$ with a minimum number of pairs 
and among them, take an APDS that maximizes the quantity $\sum_{(u,u')\in S} |u\cap u'|$.

We prove that $S$ is nice which will prove the lemma.
Assume by contradiction that $S$ is not nice. It means that there exist $i,j,s$ such that:
\begin{itemize}
    \item $x_{i,s}$ and $x_{i,s+1}$ contain $d_j$;
    \item there exists $v$ starting after $d_j$ such that $(x_{i,s},v)$ is a pair of $S$ and
    \item there is no interval $w$ starting after $d_j$ such that $(x_{i,s+1},w)$ is a pair of $S$.
\end{itemize}

We distinguish two cases. First, assume that $x_{i,s+1}$ is in no pair of $S$. Then let $S'$ be the set of pairs of $S$ where we have replaced the pair $(x_{i,s},v)$ by the pair $(x_{i,s},x_{i,s+1})$. The intersection of $x_{i,s}$ and $x_{i,s+1}$ strictly contains the intersection of $x_{i,s}$ and $v$. Thus $S'$ is still an APDS with the same number of pairs than $S$ but $\sum_{(u,u')\in S'} |u\cap u'|$ is larger than for $S$, a contradiction.

Assume next that $x_{i,s+1}$ appears in some pair $(x_{i,s+1},w)$ of $S$. By assumption, $w$ must start before $d_j$.
Note that $\max(v)>\max(w)$, otherwise one can remove the pair $(x_{i,s},v)$ from $S$, contradicting the minimality of $S$. 
If the intersection of $x_{i,s}$ and $x_{i,s+1}$ contains the intersections of the pairs $(x_{i,s},v)$ and $(x_{i,s+1},w)$, then one can remove the two pairs and add $(x_{i,s},x_{i,s+1})$, a contradiction with the minimality of $S$. 
Since the intersection of $x_{i,s}$ and $v$ is included in $x_{i,s}\cap x_{i,s+1}$ ($v$ starts after $\min(x_{i,s+1})$), we can assume that $w$ is finishing after $x_{i,s}$, i.e. $\max(w)>\max(x_{i,s})$. But then the intersection of $x_{i,s}$ and $v$ is included in the intersection of $x_{i,s+1}$ and $w$ and we can remove the pair $(x_{i,s},v)$ from $S$, contradicting the minimality of $S$.
\end{proof}

We will compute whether $G$ admits a nice pairing dominating set using a dynamic programming method and hence, by Lemma \ref{lem:nicePDS} and Proposition \ref{prop:adjacent}, whether $G$ admits a pairing dominating set. For that, we go through the dates $d_j$ and register which intervals will be paired after $d_j$ and until when the partial PDS is pair-covering vertices.
A {\em profile} is given by a $(k+2)$-tuple 
$\mathcal I=(d_j,s_1,\dots ,s_{k},t)$ with $d_j\in \{d_0,\dots ,d_{2n}\}$, $s_i\in \{1,\dots ,n_i+1\}$ and $t\in \{t_1,\dots ,t_{2n}\}$. Intuitively, $d_j$ is the date we are considering. The integers $s_i$ represent, for each $X_i$, the first interval of $X_i$ that contains $d_j$ and will be paired after $d_j$. Since we are looking for a nice PDS, it means that all the intervals of $X_i$ containing $d_j$ that are after $x_{i,s_i}$ will be paired after $d_j$. We put $s_i=n_i+1$ if there is no such interval. Finally, $t \in \{t_1,\dots ,t_{2n}\}$ represents the last date pair-covered by a pair in the partial PDS ($t=t_1$ means that there are no pairs in the PDS). 

Given a profile $\mathcal I$ and a date $d_j$, we define the set of intervals $X(\mathcal I,d_j)$ as follows: for each $i$, $x_{i,s}\in X(\mathcal I,d_j)$ if it contains $d_j$ and $s\geq s_i$. Intuitively, the set $X(\mathcal I,d_j)$ corresponds to the intervals containing $d_j$ that will be paired after $d_j$.

 We say that a profile is {\em valid} if:
\begin{enumerate}
    \item[(V1)] for each $i$, either $s_i=n_i+1$ or $x_{i,s_i}$ contains $d_j$;
    \item[(V2)] there exists a set $S$ (possibly empty) of pairs of adjacent intervals $\{(u_1,v_1),\dots ,(u_m,v_m)\}$ such that:
    \begin{enumerate}
    \item all the intervals appearing in $S$ are distinct; 
    \item all the intervals $u_i$ and $v_i$ start strictly before $d_j$; 
        \item no intervals $u_i$ or $v_i$ are in $X(\mathcal I,d_j)$;    
        \item if $S$ is empty, then $t=t_1$. Otherwise, $t=\max_{i=1}^m(\max(u_i\cap v_i))$, i.e. $t$ is the largest end of the intersections $u_i\cap v_i$;
        \item for any interval $x$ that ends before $d_j$, there exists a pair $(u_i,v_i)$ such that $x$ intersects both $u_i$ and $v_i$
        (we have a partial adjacent pairing dominating set that pair-covers all the intervals that are finishing before
        $d_j$).
    \end{enumerate}
\end{enumerate}

Note that we can have $t<d_j$.
The only valid profile for $d_0$ is $\mathcal I=(d_0,n_1+1,\dots ,n_{k}+1,t_1)$: there are no elements in $X(\mathcal I,d_0)$ and no (non-empty) partial pairing dominating set.

Let $j\geq 0$. We now explain how to compute the valid profiles at date $d_{j+1}$ from the profiles at date $d_j$. Let $\mathcal I=(d_j,s_1,\dots,s_{k},t)$ and $\mathcal I'=(d_{j+1},s'_1,\dots ,s'_{k},t')$ be two profiles at dates $d_j$ and $d_{j+1}$. We define {\em compatibility} between profiles as follows.
Assume first that there is an interval $u$ that starts between $d_j$ and $d_{j+1}$. In particular, $u$ contains $d_{j+1}$. 
Assume that $u\in X_{i_0}$ for some $i_0$ and let $u=x_{i_0,s}$. Furthermore, let $v=x_{i_1,s_{i_1}}\in X(\mathcal{I},d_j)$ be the first interval to end in $X(\mathcal{I},d_j)$.
We say that profiles $\mathcal I$ and $\mathcal I'$ are {\em add-compatible} if we are in one of the following cases:

\begin{enumerate}
    \item[(A1)] $t'=t$, $s'_i=s_i$ for all $i\neq i_0$,  $s'_{i_0}=s_{i_0}$ if $s_{i_0}\neq n_{i_0}+1$ and $s'_{i_0}=s$ if $s_{i_0}= n_{i_0}+1$.
(Intuitively, this case  corresponds to the case where $u$ must be paired after $d_{j+1}$.)
    \item[(A2)] Or, $t'=t$, $s'_i=s_i$ for all $i$  and $s'_{i_0}=s_{i_0}=n_{i_0}+1$. (This corresponds to the case where $u$ will not be in a pair. In particular, no elements of $X_{i_0}$ containing $d_{j}$ or $d_{j+1}$ can be paired after $d_j$ or $d_{j+1}$.)
    \item[(A3)] Or, $t'=\max(t,\max(u\cap v))$, $s'_{i_0}=n_{i_0}+1$, $s'_i=s_i$ for each $i\neq i_0,i_1$ and either  $s'_{i_1}=n_{i_1}+1$ if $X_{i_1}\cap X(\mathcal I,d_j)=\{v\}$ or otherwise $s'_{i_1}=s_{i_1}+1$.
        (This corresponds to the case where $u$ is paired with an interval in $X(\mathcal I,d_j)$, we will explain later why it can be assumed that $u$ is paired with $v$.)
\end{enumerate}

Note that there are at most three profiles $\mathcal I'$ that are add-compatible with $\mathcal I$.

Assume now that there is an interval $u$ that ends between $d_j$ and $d_{j+1}$. 
Then we say that $\mathcal I$ and $\mathcal I'$ are {\em remove-compatible} if all the following conditions are true:
\begin{enumerate}
    \item[(R1)] $u\notin X(\mathcal I,d_j)$;
    \item[(R2)] $s_i=s'_i$ for all $i$;
    \item[(R3)] $t=t'$;
    \item[(R4)] $\min(u)<t$.
\end{enumerate}

Note that there is at most one  remove-compatible profile with $\mathcal I$. 
Next lemma ensures that the definition of validity is consistent with that of compatibility.
\begin{lemma}\label{lem:validcomp}
If  $\mathcal I=(d_j,s_1,\dots,s_k,t)$ and $\mathcal I'=(d_{j+1},s_1',\dots,s_k',t')$ are compatible and $\mathcal I$ is valid, then $\mathcal I'$ is valid.
\end{lemma}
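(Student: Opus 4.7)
The plan is to proceed by a case analysis on the type of compatibility between $\mathcal I$ and $\mathcal I'$, using the witness pair-set $S$ guaranteed by validity of $\mathcal I$ to build a candidate $S'$ for $\mathcal I'$ and then verifying (V1) and (V2). A useful preliminary observation is that exactly one extremity $t_{j+1}$ lies strictly between $d_j$ and $d_{j+1}$: either the start of a unique interval $u$ (add case) or the end of a unique interval $u$ (remove case). Consequently, any pointer $x_{i,s_i}$ that is not explicitly modified by the transition still contains $d_{j+1}$, which disposes of (V1) except at the two indices $i_0$ (and $i_1$ in A3) that require a direct check.

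In the remove-compatible case I would take $S' = S$. Conditions (V2.a)--(V2.d) transfer directly from $\mathcal I$ once one uses (R1) to see that $u \notin X(\mathcal I, d_j)$, which together with (R2) yields $X(\mathcal I', d_{j+1}) = X(\mathcal I, d_j)$. The real content is (V2.e): the interval $u$, now ended, must be pair-covered by $S$. For this I would use (R4) to pick a pair $(u_\ell, v_\ell)\in S$ whose intersection achieves the maximum $t = \max(u_\ell \cap v_\ell)$, so that $\min(u) < t$; since both $u_\ell$ and $v_\ell$ start before $d_j < \max(u)$, the interval $u_\ell \cap v_\ell$ also starts before $\max(u)$, and therefore $u$ meets $u_\ell \cap v_\ell$. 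Helly's property for intervals on the line then gives that $u$ is adjacent to both $u_\ell$ and $v_\ell$, so $u$ is pair-covered by $S$.

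In the add-compatible case, let $u = x_{i_0,s}$ be the starting interval; then $u$ begins strictly after $d_j$ and hence lies in no pair of $S$, and every element of $X(\mathcal I, d_j)$ still contains $d_{j+1} > \min(u)$ so is adjacent to $u$. For A1 and A2 I would take $S' = S$: conditions (V2.a)--(V2.d) are routine, (V2.e) is inherited because no interval ends in $(d_j, d_{j+1})$, and the prescribed pointer updates are designed precisely so that $u \in X(\mathcal I', d_{j+1})$ in A1 and $u \notin X(\mathcal I', d_{j+1})$ in A2, while all other $X_i$ are unchanged. For A3 I would take $S' = S \cup \{(u,v)\}$: the new pair is admissible because $u$ and $v$ are adjacent (by the preliminary observation), both start before $d_{j+1}$, and $v\in X(\mathcal I, d_j)$ is by (V2.c) for $\mathcal I$ not yet in $S$, so (V2.a)--(V2.b) still hold; the assignment $t' = \max(t, \max(u\cap v))$ is exactly the new maximum intersection endpoint (V2.d); and the shifts $s'_{i_0} = n_{i_0}+1$, $s'_{i_1}\in\{s_{i_1}+1, n_{i_1}+1\}$ remove $u$ and $v$ from $X(\mathcal I', d_{j+1})$, so that the augmented $S'$ still avoids $X(\mathcal I', d_{j+1})$, giving (V2.c).

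The main obstacle I expect is the bookkeeping: carefully tracking how $X(\mathcal I, d_j)$ and $X(\mathcal I', d_{j+1})$ differ in each subcase and confirming that the pointer updates remain consistent with the nice-PDS constraint that the waiting set in each $X_i$ is a suffix in start-order. The edge case $i_0 = i_1$ in A3, together with the boundary case $X(\mathcal I, d_j) \cap X_{i_1} = \{v\}$, also warrants a brief explicit check. Once these are settled, the rest is a mechanical transcription of the definitions of add- and remove-compatibility.
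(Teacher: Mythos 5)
Your proposal is correct and follows essentially the same route as the paper: a case split on add- versus remove-compatibility, reusing the witness set $S$ in cases (A1), (A2) and the remove case, augmenting it to $S\cup\{(u,v)\}$ in case (A3), and using (R4) together with the pair achieving $t$ to pair-cover the interval that ends. Your unified overlap argument for (V2.e) is a slightly cleaner version of the paper's two-case check ($t<d_j$ versus $t>d_j$), but the underlying idea is identical.
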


\begin{proof}
Consider $\mathcal I=(d_j,s_1,\dots,s_k,t)$ and $\mathcal I'=(d_{j+1},s_1',\dots,s_k',t')$. Assume that $\mathcal I$ is valid.

Assume first that an interval $u=x_{i_0,s}$ starts between $d_j$ and $d_{j+1}$ and that $\mathcal I$ and $\mathcal I'$ are add-compatible. Let $v=x_{i_1,s_{i_1}}\in X(\mathcal{I},d_j)$ be the first interval to end in $X(\mathcal{I},d_j)$.
We prove that $\mathcal I'$ is valid. First note that (V1) is always satisfied.
We just have to check the condition when $s'_i\neq n_i+1$. When $s'_i=s_i$, we know from the fact that $\mathcal I$ is valid that $x_{i,s_i}$ contains $d_j$. Since no interval  finishes between $d_j$ and $d_{j+1}$, $x_{i,s_i}$ also contains $d_{j+1}$. When $s'_{i_0}=s$, $x_{i_0,s}=u$ clearly contains $d_{j+1}$. Finally, in the case (A3), we might have $s'_{i_1}=s_{i_1}+1$. But in this case, $X_{i_1}\cap X(\mathcal I,d_j)$ is not reduced to $v=x_{i_1,s_{i_1}}$. In particular, $x_{i_1,s_{i_1}+1}$ must be in the intersection which means that it contains $d_j$ and also $d_{j+1}$ since it cannot end between the two dates.

Let us now consider Point (V2) of definition of validity. If we are in the two first cases (A1) or (A2) of add-compatibility, then any partial PDS that validates $\mathcal I$ will validate $\mathcal I'$. 
In case (A3), let $S=\{(u_1,v_1),\dots,(u_m,v_m) \}$ be a partial pairing dominating set that validates $\mathcal I$. Then one can check that $S'=S\cup \{(u,v)\}$ validates $\mathcal I'$. Indeed, $v$ does not appear in $S$ by condition (V2.c), thus all the intervals of $S'$ are distinct and start strictly before $d_{j+1}$. Hence, conditions (V2.a) and (V2.b) are satisfied. Since neither $u$ nor $v$ is paired after $d_{j+1}$, vertices $u$ nor $v$ are not in $X(\mathcal I',d_{j+1})$ and condition (V2.c) is satisfied.
The value  $t'=\max(t,u\cap v)$ satisfies (V2.d) and finally, the set of intervals finishing before $d_{j+1}$ is the same as the set of intervals finishing before $d_j$. Since $S$  pair-covers them, so does $S'$  satisfying (V2.e).

Assume next that an interval $u=x_{i_0,s}$ ends between $d_j$ and $d_{j+1}$. First of all, Point (V1) of validity is still satisfied. Indeed, consider first each $i\neq i_0$. By Point (R2),  we have $s'_i=s_i$. Thus, either $s_i'=n_i+1$ or $s_i< n_i+1$ and $x_{i,s_i'}=x_{i,s_i}$ contains $d_{j}$. Since $i\neq i_0$, $x_{i,s_i'}$ also contains $d_{j+1}$. Consider then $i=i_0$. By Point (R1) of remove-compatibility, we have $u\notin X(\mathcal I,d_j)$. Thus, $s_{i_0}>s$ and hence, either $s_{i_0}=n_{i_0}+1$ or $x_{i,s_{i_0}'}=x_{i,s_{i_0}}$ and the claim follows as in the previous case.


Consider next Point (V2). Then, any partial pairing dominating set $S$ that validates $\mathcal I$ will validate $\mathcal I'$.
Indeed, Conditions (V2.a) to (V2.d) are clearly still true. Furthermore, vertex $u$ has also to be pair-covered by $S$.
Note that $u$ starts before $t$ (Condition (R4)) and ends after $d_j$. Let $(u_i,v_i)\in S$ such that $\max(u_i\cap v_i)=t$ (it exists since $t>\min(u)\geq t_1$). Either $t<d_j$ and then clearly $t\in u$.
Or $t>d_j$ but then $d_j$ is contained in $u_i$, $v_i$ and $u$. In both cases, $u$ intersects both $u_i$ and $v_i$ and thus is pair-covered by~$S$.
\end{proof}

The idea of our algorithm is to compute valid profiles starting by the unique valid profile at $d_0$ and then computing all the compatible profiles from it. To prove that it is correct, we must prove that if there is a PDS in $G$, then there must be a valid profile at $d_{2n}$ obtained this way. 

Consider a graph $G$ together with a nice APDS $S$ of minimum size. We define profile $\mathcal I_j^S=(d_j,s_1,\dots ,s_{k},t)$ as follows. Let $S_j\subseteq S$ be the pairs of $S$ for which both intervals start before $d_j$. 
 Let $i\in \{1,\dots ,k\}$. As in the definition of a nice PDS, let $X(S,i,j)$ be the intervals of $X_i$ that contain $d_j$ and that are paired after $d_j$. If $X(S,i,j)$ is non-empty, then we let $s_i$ be the smallest $s$ such that $x_{i,s}\in X(S,i,j)$. Otherwise, let $s_i=n_i+1$. Finally, let $t=\max_{(u,v)\in S_j}\{\max(u\cap v)\}$. Note that we have exactly $X(\mathcal I_j^S,d_j)=\cup_{i} X(S,i,j)$.


\begin{lemma}\label{lem:validPDS}
Let $G$ be an interval graph with a nice APDS $S$ of minimum size. Profiles $\mathcal I_j^S$ and $\mathcal I_{j+1}^S$ are compatible and valid for each $j\in \{0,\dots,2n-1\}$.
\end{lemma}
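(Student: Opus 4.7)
I plan to verify validity and compatibility separately for each $j\in\{0,\ldots,2n-1\}$ by a direct case analysis guided by the structure of the nice APDS $S$.

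For validity of $\mathcal{I}_j^S=(d_j,s_1,\ldots,s_k,t)$, the natural witness for condition (V2) is the set $S_j=\{(a,b)\in S:\min(a)<d_j\text{ and }\min(b)<d_j\}$. Condition (V1) is immediate from the definition of $s_i$ as the smallest index in $X(S,i,j)$. Conditions (V2.a)--(V2.c) follow because any interval of a pair in $S_j$ has a partner starting before $d_j$, hence is not paired after $d_j$ and therefore not in $X(S,\cdot,j)$. Condition (V2.d) is immediate from the definition of $t$. For (V2.e), any interval $x$ ending strictly before $d_j$ is pair-covered in $S$ by some $(a,b)$, and $\min(a),\min(b)\leq\max(x)<d_j$ forces $(a,b)\in S_j$.

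For compatibility of $\mathcal{I}_j^S$ and $\mathcal{I}_{j+1}^S$, the interval $(d_j,d_{j+1})$ contains exactly one endpoint. In the remove case, where some interval $u$ ends in this range, I would verify rules (R1)--(R4): (R1) holds because a partner of $u$ paired after $d_j$ would start in $(d_j,\max(u))\subseteq(d_j,d_{j+1})$, contradicting the absence of starts in this range; (R2) and (R3) follow because no membership of $X(S,i,\cdot)$ changes across $d_{j+1}$ and $S_j=S_{j+1}$; and (R4) follows from (V2.e) applied to $u$, since $u\cap a,u\cap b\neq\emptyset$ for the witnessing pair $(a,b)\in S_j$ forces $\min(u)<\max(a\cap b)\leq t$. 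In the add case, where some $u=x_{i_0,s}$ starts in this range, three sub-cases arise: if $u$ belongs to no pair of $S$, niceness forces $X(S,i_0,j+1)=\emptyset$ (otherwise $u$, as the last interval of $X_{i_0}$ containing $d_{j+1}$, would lie in the suffix), and propagating backwards gives $X(S,i_0,j)=\emptyset$, matching (A2); if $u$ is paired with $v$ where $\min(v)>d_{j+1}$, then $u$ simply joins the suffix $X(S,i_0,\cdot)$, matching (A1); and if $u$ is paired with some $v'\in X(S,\cdot,j)$, the target rule is (A3).

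This last sub-case is the main obstacle, since (A3) is formulated with respect to the specific first-to-end interval $v=x_{i_1,s_{i_1}}\in X(\mathcal{I}_j^S,d_j)$, so one must ensure $v'=v$. I would handle this by an exchange argument performed before extracting $\mathcal{I}_{j+1}^S$. If $v'\neq v$, let $w$ denote the partner of $v$ in $S$ (necessarily with $\min(w)>d_{j+1}$, since the only new start in $(d_j,d_{j+1}]$ is $u\neq w$), and replace the pairs $(u,v')$ and $(v,w)$ by $(u,v)$ and $(v',w)$. Both new pairs are adjacent: $u\cap v\neq\emptyset$ since $\max(v)>d_{j+1}>\min(u)$ (no endpoint of $v$ falls in $(d_j,d_{j+1})$), and $v'\cap w\neq\emptyset$ since $\max(v')>\max(v)>\min(w)$. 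The subtle verification is that the resulting set remains a nice APDS of the same (minimum) size: intervals previously pair-covered by $(v,w)$ are pair-covered by $(v',w)$ using that $v'$ extends strictly further right than $v$; intervals previously pair-covered by $(u,v')$ but not by either new pair would have to satisfy $\max(v),\max(w)<\min(x)<\max(u),\max(v')$, and minimality of $S$ combined with the first-to-end property of $v$ forces such $x$ to be pair-covered by another surviving pair of $S$. Iterating this exchange whenever needed produces an $S$ for which $u$ is paired with the first-to-end $v$ at every add step, and (A3) then applies directly, completing the verification.
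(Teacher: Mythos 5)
Most of your argument tracks the paper's proof: the direct verification of validity via $S_j$ (the paper instead checks only $\mathcal I_0^S$ and propagates validity through Lemma~\ref{lem:validcomp}, but your direct check is sound), the remove-compatibility conditions (R1)--(R4), and the add sub-cases matching (A1) and (A2) are all handled essentially as in the paper. The divergence --- and the gap --- is in the (A3) sub-case. You correctly identify that the obstacle is forcing the pre-$d_j$ partner of $u$ to be the first-to-end interval $v$ of $X(\mathcal I_j^S,d_j)$, but your exchange argument does not close it. The step ``minimality of $S$ combined with the first-to-end property of $v$ forces such $x$ to be pair-covered by another surviving pair of $S$'' is asserted, not proved, and it is exactly the content that needs an argument: an interval $x$ with $\max(v)<\min(x)\leq\min(\max(u),\max(v'))$ that misses $v'\cap w$ is not covered by either new pair, and nothing you have said produces an alternative covering pair for it. In addition, the exchange changes $S$, so (i) you would also need to re-verify that the modified set is still \emph{nice} (the sets $X(S,i,\ell)$ change for many dates $\ell$, not just at $d_j$), (ii) the profiles $\mathcal I_\ell^S$ for $\ell\leq j$ are defined relative to the original $S$ and may no longer be the profiles of the modified set, and (iii) the lemma is stated for an arbitrary minimum nice APDS, so proving it only for some modified one is formally weaker (though it would still suffice for Theorem~\ref{thm:xp}).

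The paper avoids all of this by showing the bad configuration simply cannot occur in a minimum APDS, so no exchange is ever needed. Suppose $u$ is paired with $v'\neq v$, and let $(v,w)$ be the pair of $S$ containing the first-to-end interval $v$, with $\min(w)>d_{j+1}$. If $\max(v)<\max(u)$, then $v\cap w\subseteq[\min(w),\max(v)]\subset[\min(u),\min(\max(u),\max(v'))]=u\cap v'$, so every interval pair-covered by $(v,w)$ is pair-covered by $(u,v')$ and the pair $(v,w)$ can be deleted, contradicting minimality of $|S|$. If $\max(v)>\max(u)$, then $v\cap v'$ (both contain $d_j$) contains both $u\cap v'$ and $v\cap w$, so the two pairs can be replaced by the single pair $(v,v')$, again contradicting minimality. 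Hence the partner of $u$ is already $v$ and (A3) applies directly to the given $S$. Replacing your exchange by this minimality contradiction repairs the proof.
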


\begin{proof}
    
Let $j\in \{0,\dots ,2n\}$.
Consider $\mathcal I^S_j=(d_j,s_1,\dots ,s_{k},t)$ as they are defined above. 
Observe that $\mathcal I^S_0$ is valid (since 
$\mathcal I_0^S=(d_0,n_1+1,\dots ,n_{k}+1,t_1)$). Hence, if each pair $(I_j^S,I_{j+1}^S)$ is compatible, then all profiles are valid by Lemma \ref{lem:validcomp}.


We next prove that $\mathcal I_j^S$ and $\mathcal I_{j+1}^S$ are compatible for any $0\leq j<2n$.
Assume first that an interval $u=x_{i_0,s}$ starts between dates $d_j$ and $d_{j+1}$.
\begin{enumerate}
    \item Assume first that $u$ is in a pair $(u,v)$ of $S$ with $v$ that starts after $d_{j+1}$. Then $u\in X(S,i_0,j+1)$. Since $S$ is a nice PDS, the intervals in $X(S,i_0,j)$ and $X(S,i_0,j+1)$ must be consecutive. Moreover, we have $X(S,i_0,j+1)=X(S,i_0,j)\cup\{u\}$. This means that either $s'_{i_0}=s_{i_0}$ (if $X(S,i_0,j)$ is non-empty) or $s_{i_0}=n_{i_0}+1$ and  $s'_{i_0}=s$ (if $X(S,i_0,j)$ is empty). For the other $s'_i$, nothing has changed and thus $s'_i=s_i$. 
    \item Assume now that $u$ is not in a pair of $S$. Then $\mathcal I_j^S$ and $\mathcal I_{j+1}^S$ are defined similarly. Note that since $S$ is a nice PDS and since $u\notin X(S,i_0,j+1)$, set $X(S,i_0,j+1)$ must be empty. This implies that $X(S,i_0,j)$ is also empty. In particular $s_{i_0}=s'_{i_0}=n_{i_0}+1$.
    \item Assume now that $u$ is in a pair $(u,v)$ in $S$ with $v$ that starts before $d_j$. Since $u$ and $v$ must intersect, $v$ contains $d_{j}$.
    We must have $v\in X(S,i_1,j)\subseteq X(\mathcal I_j^S,d_j)$ for some $i_1$. Assume $v$ is not the interval of $X(\mathcal I_j^S,d_j)$ finishing first. Instead, let $v'$ be that interval. It must be in a pair $(v',u')$ with $u'\neq u$ starting after $d_{j+1}$. 
    If $v'$ ends before $u$ ends, then $v'\cap u' \subset v\cap u$ and $S$ is not minimal. If $v'$ ends after $u$, then $v\cap v'$ contains $v\cap u$ and $v'\cap u'$. Again $S$ is not minimal, a contradiction.
    Thus $v$ is defined as in the definition of add-compatibility, and the values of $s_i$ and $s'_i$ are defined in the same way.
\end{enumerate}

In all the cases, $\mathcal I_j^S$ and $\mathcal I_{j+1}^S$ are add-compatible.

Assume now that an interval $u$ is finishing between $d_j$ and $d_{j+1}$. It must be covered by a pair $(u',v')$ of $S$. Then $\min(u)< \max(u'\cap v')$ and $\min(u'\cap v')< \max(u) < d_{j+1}$. In particular, $u'$ and $v'$ are starting before $d_{j+1}$ and thus $d_j$ and are in $S_j=S_{j+1}$. Thus $t=t'\geq \max(u'\cap v')$ and $\min(u)< t$. It proves that  $\mathcal I_j^S$ and $\mathcal I_{j+1}^S$ are remove-compatible. 
\end{proof}

\begin{proof}[Proof of Theorem \ref{thm:xp}]
The algorithm  starts with the unique valid profile at $d_0$: $\mathcal I_0^S =(d_0,n_1+1,\dots ,n_{k}+1,t_1)$. Then for each date $d_i$, starting with $d_1$, we compute all the profiles that are compatible with the profiles at date $d_{i-1}$. By Lemma \ref{lem:validcomp}, all these profiles will be valid.
If the algorithm finds a valid profile $\mathcal I_{2n}$ at date $d_{2n}$, then the set $S$ that validates $\mathcal I_{2n}$ must be a PDS since all the intervals finish before $d_{2n}$.

For the other direction, if $G$ has a PDS, then by Proposition \ref{prop:adjacent} and Lemma \ref{lem:nicePDS}, graph $G$ has an APDS $S$ of minimal size that is nice. By Lemma \ref{lem:validPDS}, the algorithm must find all the profiles $\mathcal I_j^S$ defined before Lemma \ref{lem:validPDS}. 
In particular, it must find $\mathcal I_{2n}^S$ and return that $G$ has a PDS.

About the complexity, finding the value of $k=\nu(G)$ a $k$-nested representation can be done in time $O(n+m)$ \cite{nested}. From this representation, finding the partition into $k$ disjoint proper representations can be done (with the help of Dilworth's theorem, as we have mentioned) in time $O(kn)$ \cite{Decomposition}. Then, there are at most $n^{k+1}$ valid profiles at some date. Computing all the profiles that are compatible between two consecutive dates, takes linear time in $n$. Since we consider profiles at $2n+1$ different dates, the algorithm runs in total in $O(n^2+kn+n^{k+3})=O(n^{k+3})$ 
time.
\end{proof}

Since computing if interval graph $G$ admits a PDS or has outcome $\mathcal D$ is equivalent by Theorem~\ref{thm:eqint}, we obtain the following corollary:

\begin{corollary}
      \PBoutcomeD can be computed in time $O(n^{\nu(G)+3})$ in interval graphs.
\end{corollary}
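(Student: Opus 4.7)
The plan is straightforward: combine Theorem~\ref{thm:eqint} with Theorem~\ref{thm:xp} directly, since the corollary is essentially the translation of the PDS-algorithm through the equivalence established for interval graphs. First I would recall that Theorem~\ref{thm:eqint} asserts, for any interval graph $G$, that $o(G)=\D$ if and only if $G$ admits a pairing dominating set. Thus solving \PBoutcomeD on an interval graph $G$ reduces in constant time (modulo reading the input) to solving \PBPDS on the same graph.

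Next I would invoke Theorem~\ref{thm:xp}, which provides an $O(n^{\nu(G)+3})$-time algorithm for \PBPDS on interval graphs, and observe that the composition of the reduction with that algorithm runs in the same asymptotic time. A minor point to check is that the value $\nu(G)$ and an associated $\nu(G)$-nested representation are already computed inside the algorithm of Theorem~\ref{thm:xp} (in linear time, as noted in its proof using the result of~\cite{nested}), so we do not need the parameter $\nu(G)$ to be given as input: the algorithm produces it.

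The main (and essentially only) step is ensuring that no extra work is hidden in the reduction itself. Since the equivalence of Theorem~\ref{thm:eqint} is a structural ``if and only if'' statement with no algorithmic conversion required between an outcome-$\D$ certificate and a PDS certificate, one can simply run the PDS algorithm on $G$ and return its answer as the answer to \PBoutcomeD. The bound $O(n^{\nu(G)+3})$ therefore transfers verbatim, completing the proof. I do not anticipate any obstacle beyond this bookkeeping, since both ingredients have already been fully established earlier in the paper.
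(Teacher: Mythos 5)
Your proposal is correct and matches the paper exactly: the corollary is stated there as an immediate consequence of Theorem~\ref{thm:eqint} (outcome $\D$ is equivalent to having a PDS in interval graphs) combined with the $O(n^{\nu(G)+3})$ algorithm of Theorem~\ref{thm:xp}, with no additional argument needed. Your remark that $\nu(G)$ and the nested representation are computed inside that algorithm rather than supplied as input is a correct and sensible piece of bookkeeping.
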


As a conclusion to this section, recall that the general (non-parameterized) complexity of \PBPDS in interval graphs remains open:

\begin{open}\label{oq:pds}
    What is the complexity of \PBPDS in interval graphs?
\end{open}

\section{Conclusion and perspectives}

Although a significant step has been made in this paper about the resolution of the domination game on interval graphs, the complexity of the problem for general interval graphs remains a relevant open problem. In particular, we have shown that, in interval graphs, \PBPDS is in {\sf NP} instead of being {\sf PSPACE}-complete, leading 
to the structural Open problem~\ref{oq:pds} about the complexity of \PBPDS in interval graphs. Moreover, the equivalence between \PBoutcomeD and finding perfect $[1,2]$-factors for outerplanar graphs opens the door for the study of related classes of graphs such as graphs of treewidth 2.

In addition, the surprisingly simple resolution of regular graphs would naturally lead to the case of particular subgraphs of them. More precisely, the case of subcubic graphs seems to be the first step to consider towards this direction. As shown by Figure~\ref{fig:delta+1}, their characterisation will be more tricky with the two possible outcomes that may arise. In correlation with this examination of the degrees of the graph, as all known hardness reductions require graphs with high maximum degree, one can wonder whether there could be positive results when this maximum degree is bounded.

Finally, because of the structural motivations of the current work, only the outcome $\D$ was considered here. One can wonder if, for some families of graphs, a refinement can be done to have a full characterisation of the two other cases (i.e. $\St$ and $\mathcal{N}$). In particular, for the outcome $\mathcal{N}$, if a graph can be partitioned with a single star and a perfect $[1,2]$-factor, or with a star and a $K^+_{2,\ell}$-factor, then
Dominator can win playing first. 
 This is true since a star is itself an $\mathcal{N}$-graph. In particular if, in addition, the graph does not have outcome $\D$, then it has outcome $\mathcal{N}$. As for the \PBoutcomeD problem, one can investigate the other direction: if a graph has outcome $\mathcal{N}$, does it admit such a partition?
 As we already know from~\cite{MBdomgame} that if a tree is $\mathcal{N}$, then it can be decomposed with a star and a perfect matching, this work could be extended to some of the classes of graphs considered in the current paper.

 \subsection*{Acknowledgements}
This research was supported by the ANR project P-GASE (ANR-21-CE48-0001-01). Tuomo Lehtil\"a's research was supported by the Finnish Cultural Foundation and by the Academy of Finland grant 338797. Some parts of this research were carried out when he was in Univ Lyon, CNRS, INSA Lyon, UCBL, Centrale Lyon, Univ Lyon 2, LIRIS, UMR5205, F-69622, Villeurbanne, France and in University of Helsinki, department of computer science, Helsinki, Finland.

\bibliography{references}

\end{document}